\newtheorem{theorem}{Theorem}[section]
\newtheorem{lemma}[theorem]{Lemma}
\newtheorem{proposition}[theorem]{Proposition}
\newtheorem{conjecture}[theorem]{Conjecture}
\theoremstyle{remark}
\theoremstyle{definition}
\newtheorem{definition}[theorem]{Definition}
\DeclareMathOperator{\HS}{HS}
\DeclareMathOperator{\LH}{LH}
\DeclareMathOperator{\CH}{CH}
\DeclareMathOperator{\HH}{H}
\DeclareMathOperator{\ACH}{ACH}
\DeclareMathOperator{\pd}{pd}
\DeclareMathOperator{\supp}{supp}
\DeclareMathOperator{\secondmax}{s-max}
\DeclareMathOperator{\mingens}{Mingens}
\DeclareMathOperator{\lex}{lex}
\title{Edge ideals with linear quotients and without homological linear quotients}
\author[T. Chau]{Trung Chau}
\address[T. Chau]
{Chennai Mathematical Institute, H1 SIPCOT IT Park, Siruseri, Kelambakkam 603103, India.}
\email{chauchitrung1996@gmail.com}
\author[K. K. Das]{Kanoy Kumar Das}
\address[K. K. Das]
{Chennai Mathematical Institute, H1 SIPCOT IT Park, Siruseri, Kelambakkam 603103, India.}
\email{kanoydas0296@gmail.com, kanoydas@cmi.ac.in}
\author[A. Maithani]{Aryaman Maithani}
\address[A. Maithani]
{Department of Mathematics, University of Utah, 155 South 1400 East, Salt Lake City, UT~84112, USA}
\email{maithani@math.utah.edu}
\begin{document}

\keywords{edge ideals, homological shift ideals, linear resolution, linear quotients, forbidden structures}

\subjclass[2020]{13D02; 13F55; 05E40; 05C75}

\begin{abstract}
    A monomial ideal $I$ is said to have homological linear quotients if for each $k\geq 0$, the homological shift ideal $\HS_k(I)$ has linear quotients. It is a well-known fact that if an edge ideal $I(G)$ has homological linear quotients, then $G$ is co-chordal. We construct a family of co-chordal graphs $\{\HH_n^c\}_{n\geq 6}$ and propose a conjecture that an edge ideal $I(G)$ has homological linear quotients if and only if $G$ is co-chordal and $\HH_n^c$-free for any $n\geq 6$. In this paper, we prove one direction of the conjecture. Moreover, we study possible patterns of pairs $(G,k)$ of a co-chordal graph $G$ and integer $k$ such that $\HS_k(I(G))$ has linear quotients. 
\end{abstract}
\maketitle

\section{Introduction}

    Let $S=\Bbbk[x_1,\dots ,x_n]$ be a polynomial ring over a field $\Bbbk$, and $G$ be a finite simple graph with the vertex set $V(G)=\{x_1,\dots, x_n\}$. 
    The \emph{edge ideal} of $G$, denoted by $I(G)$, is the ideal generated by monomials $x_ix_j$ if $x_i$ and $x_j$ form an edge in $G$. 
    The study of edge ideals has been an active area of tremendous interests in commutative algebra ever since its conception by Villareal \cite{Villarreal1990, SVV1994}. 
    Among other topics, the classification of graphs whose edge ideals have an algebraic property is central (cf. \cite{MoreyVillarrealSurvey2012, FranciscoVanTuyl2007, VanTuylVillarreal2008, FrancisciHa2008}). 
    It is a classical result by Fr\"oberg \cite{Froberg} that the edge ideal $I(G)$ has linear resolution if and only if $G$ is co-chordal (we refer to Section~\ref{sec:prem} for the definitions). 
    This result has inspired researchers to find analogs for other classes of ideals such as 
    path ideals \cite{ConcaDeNegri1998, AlFa2015, AlFa2018}, 
    edge ideals of several classes of ``chordal" hypergraphs and simplicial complexes \cite{MR2603461,MR3092695, MR3503700,MR3641976,MR3979275,MR4052306}, 
    or powers of edge ideals \cite{HHZ2004, ForestsCycles, Banerjee2015}, to name a few. 

    Let $I$ be a monomial ideal of $S$, and 
    \[
        \mathcal{F} \colon 
        \cdots \xrightarrow{\partial} F_r\xrightarrow{\partial} F_{r-1} 
        \xrightarrow{\partial} \cdots 
        \xrightarrow{\partial} F_1\xrightarrow{\partial} F_0 \to 0
    \]
    be its graded minimal free resolution over $S$. 
    We can write $F_i = \bigoplus_{j=1}^{b_i} S(-\mathbf{a}_{ij})$, 
    where $S(-\mathbf{a}_{ij})$ denotes $S$ with the shift $\mathbf{a}_{ij}\in \mathbb{N}^n$. 
    For a vector $\mathbf{a} = (a_1,\dots, a_n) \in \mathbb{N}^n$, 
    let $\mathbf{x}^\mathbf{a}$ denote the monomial 
    $x_1^{a_1}\cdots x_n^{a_n}$.
    For any $k \geq 0$, 
    the \emph{$k$-th homological shift ideal} of $I$, 
    denoted by $\HS_k(I)$, is the ideal generated by the monomials 
    $\mathbf{x}^{\mathbf{a}_{kj}}$, $j=1,\dots, b_k$. 
    It is noteworthy that $\HS_0(I)=I$. 
    Homological shift ideals are relatively recent and have become an active subject of study \cite{HMRG20, FH23, HMRZ23, Bayati23, TBR24, CF24}. 
    A monomial ideal $I$ is said to have \emph{homological linear quotients} if $\HS_k(I)$ has linear quotients for all $k\geq 0$. 
    It is of particular interest to find all graphs $G$ such that $I(G)$ has homological linear quotients. 
    By Fr\"oberg's theorem, it is necessary that $G$ is co-chordal. 
    This, however, is not sufficient, as pointed out in \cite[Example 2.3]{FH23}. 
    As such, there have been many attempts to find large classes of such graphs, including forests \cite{FH23}, proper interval graphs \cite{FH23}, and block graphs \cite{TBR24}. 
    Inspired by \cite[Example 2.3]{FH23}, we construct a family of chordal graphs $\{\HH_n\}_{n\geq 6}$ such that none of their complements have homological linear quotients. 
    We propose the following conjecture.

    \begin{conjecture}\label{conj:main}
        An edge ideal $I(G)$ has homological linear quotients if and only if $G$ is co-chordal and $\HH_n^c$-free, i.e., $G$ does not contain $\HH_n^c$ as an induced subgraph, for any $n\geq 6$.
    \end{conjecture}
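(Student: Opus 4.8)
The plan is to treat the two implications separately, since they are of very different difficulty, and I would first dispose of the provable forward (\emph{only if}) implication before turning to what I expect to be the genuine obstacle.

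For the forward direction, assume $I(G)$ has homological linear quotients. Co-chordality comes for free: since $\HS_0(I(G)) = I(G)$ has linear quotients, it has a linear resolution, so Fr\"oberg's theorem forces $G$ to be co-chordal. The substantive point is $\HH_n^c$-freeness, which I would prove in contrapositive form: if $G$ contains some $\HH_n^c$ with $n \ge 6$ as an induced subgraph, then $I(G)$ fails to have homological linear quotients. This rests on two ingredients. The first is the computation carried out in constructing the family, namely that $I(\HH_n^c)$ itself does not have homological linear quotients; concretely, that some $\HS_k(I(\HH_n^c))$ admits no admissible ordering of its generators. The second, and the real technical content of this direction, is a hereditary lemma: homological linear quotients passes to induced subgraphs, i.e. if $I(G)$ has homological linear quotients and $W \subseteq V(G)$, then so does $I(G[W])$. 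To prove this I would use that the multigraded Betti numbers of an edge ideal restrict, so that for a squarefree multidegree $\mathbf{a}$ with $\supp(\mathbf{a}) = W$ one has $\beta_{k,\mathbf{a}}(I(G)) = \beta_{k,\mathbf{a}}(I(G[W]))$; hence the generators of $\HS_k(I(G[W]))$ sit inside those of $\HS_k(I(G))$. The task is then to transport a linear-quotient ordering of $\HS_k(I(G))$ to one of $\HS_k(I(G[W]))$, which I expect to follow by restricting the ordering to the relevant generators and checking that the colon ideals remain generated in the correct degree after deleting the variables outside $W$. Applying the lemma to the induced copy of $\HH_n^c$ and combining it with the first ingredient yields the contrapositive.

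For the backward (\emph{if}) implication — that co-chordality together with $\HH_n^c$-freeness is sufficient — I do not expect a routine argument, and this is where the main obstacle lies; it is precisely the content that the statement leaves conjectural. The natural strategy is structural induction on co-chordal graphs, exploiting that the complement $G^c$ is chordal and hence admits a perfect elimination ordering. Peeling off a simplicial vertex of $G^c$ (equivalently, a suitable vertex of $G$) reduces $G$ to a smaller co-chordal, $\HH_n^c$-free graph $G'$, for which one assumes homological linear quotients inductively; one would then need to control how $\HS_k(I(G))$ is assembled from $\HS_k(I(G'))$ together with the shift ideals introduced by the new vertex, and produce an admissible ordering on the whole. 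The crux is to show that the $\HH_n^c$-free hypothesis is exactly what prevents these orderings from merging with a non-linear colon — in other words, that $\{\HH_n^c\}_{n \ge 6}$ is a \emph{complete} list of minimal obstructions. Making this completeness precise, presumably via an explicit combinatorial description of the minimal generators of $\HS_k(I(G))$ for co-chordal $G$ and a fine analysis of when an obstruction to linearity first appears, is the step I expect to be genuinely hard, and it is the reason the result is stated as a conjecture rather than a theorem.
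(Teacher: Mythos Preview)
Your reading of the statement is exactly right: this is a conjecture, and the paper only establishes the forward direction together with a small-case verification (Theorem~\ref{thm:conjecture-partial}). Your plan for the forward direction matches the paper's almost verbatim: Fr\"oberg gives co-chordality, and the $\HH_n^c$-freeness comes from the combination of (i) $\HS_{n-4}(\HH_n^c)$ failing to have linear quotients (Theorem~\ref{thm:HLQ-H}) and (ii) the hereditary lemma (Lemma~\ref{lem:linear-quotients-induced}). The only technical divergence is in how you propose to prove the hereditary lemma: you sketch a direct argument via restriction of multigraded Betti numbers and transport of an admissible ordering, whereas the paper packages this more cleanly by observing that $\HS_k(I(H^c)) = \big(\HS_k(I(G^c))\big)^{\le \prod_{x \in V(H)} x}$ (citing \cite[Lemma~4.4]{HHZ04Dirac}) and then invoking the general fact that $I^{\le m}$ inherits linear quotients from $I$ (Lemma~\ref{lem:linear-quotients-restriction}). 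Both routes work; the paper's is shorter because the restriction-to-a-monomial result is already in the literature.

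For the backward direction, you correctly flag it as the open problem and outline a plausible inductive attack via perfect elimination orderings. The paper does not attempt anything of this kind; its only contribution toward the converse is an exhaustive computer check of all co-chordal graphs on at most $10$ vertices using the database \cite{HSI}. So your structural-induction sketch is genuinely beyond what the paper claims, and your caution about it being ``the step I expect to be genuinely hard'' is well placed.
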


    We provide a partial answer to this conjecture.

    \begin{theorem}[{Theorem~\ref{thm:conjecture-partial}}]\label{thm:conjecture}
        If the edge ideal $I(G)$ has homological linear quotients, then $G$ is co-chordal and $\HH_n^c$-free for all $n\geq 6$. 
        The converse holds if $G$ has at most 10 vertices.
    \end{theorem}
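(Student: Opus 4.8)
The forward direction asserts that homological linear quotients for $I(G)$ forces $G$ to be co-chordal and $\HH_n^c$-free. Co-chordality is immediate from Fröberg's theorem together with the fact that $\HS_0(I(G)) = I(G)$ having linear quotients implies $I(G)$ has a linear resolution. The substantive content is the $\HH_n^c$-freeness. The plan is to show that if $G$ contains $\HH_n^c$ as an induced subgraph for some $n \geq 6$, then $\HS_k(I(G))$ fails to have linear quotients for some $k$. The natural mechanism is the restriction/localization behavior of homological shift ideals: if $W \subseteq V(G)$ and $G[W]$ is the induced subgraph, one expects (and must verify) that linear quotients of $\HS_k(I(G))$ for all $k$ descends to linear quotients of $\HS_k(I(G[W]))$ for all $k$ — this is the analogue of the standard fact that linear resolutions pass to induced subgraphs, and it should follow from the behavior of Betti numbers under the substitution $x_i \mapsto 1$ for $x_i \notin W$, or from a polarization/restriction argument on the lcm-lattice. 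Granting this, it suffices to prove the single statement: for each $n \geq 6$, the ideal $I(\HH_n^c)$ does \emph{not} have homological linear quotients, i.e., $\HS_{k_n}(I(\HH_n^c))$ fails linear quotients for some explicit $k_n$.

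For that core computation I would first pin down the combinatorics of $\HH_n^c$ precisely — presumably $\HH_n$ is built so that its complement is co-chordal (so that $\HS_0$ and low homological degrees behave fine) but develops an obstruction at one specific homological degree. I would compute the minimal free resolution of $I(\HH_n^c)$ far enough to extract the generators of $\HS_k(I(\HH_n^c))$ at the critical $k$; since $\HH_n^c$ is co-chordal the resolution is linear, so $\HS_k$ is generated in a single degree $k+2$, and one can get its generators from the $\mathbf{a}_{kj}$ via, e.g., a Lyubeznik-type or Eliahou–Kervaire-type description, or directly from the combinatorics of the clique structure of $\HH_n^c$. Then I would exhibit a concrete monomial $u \in \HS_k(I(\HH_n^c))$ and show that for \emph{every} ordering of the generators, the colon ideal $(\text{earlier generators}) : u$ is not generated by variables — concretely, find two other generators $v, w$ such that $u : v$ and $u : w$ each require a variable the other does not, in a configuration that cannot be linearly ordered (this is the standard "no linear quotients" certificate: a $3$-cycle or larger obstruction in the associated "linear quotients admissibility" relation). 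Making this uniform in $n$ is the key point, so I would look for a single family of three or four generators whose pairwise colons exhibit the obstruction regardless of $n$.

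The converse for $\leq 10$ vertices is, in principle, a finite check: by Fröberg one restricts to co-chordal graphs $G$ on at most $10$ vertices that are $\HH_n^c$-free (note $\HH_n^c$ has $n$ vertices, so only $n = 6,7,8,9,10$ are relevant as forbidden induced subgraphs), and for each such $G$ one verifies that $\HS_k(I(G))$ has linear quotients for all $k$ (finitely many $k$, since $\HS_k(I(G)) = 0$ for $k > \pd I(G)$). This is a computer-assisted enumeration; the write-up would describe the search space reduction (up to isomorphism, co-chordality, $\HH_n^c$-freeness), cite the algorithm used to test linear quotients of each $\HS_k$, and report that no counterexample arises. I expect the main obstacle to be the first direction: both establishing the restriction principle $\HS_k$-linear-quotients $\Rightarrow$ induced-subgraph $\HS_k$-linear-quotients cleanly (the behavior of linear quotients under specialization is more delicate than that of linear resolutions, since linear quotients is not a purely homological condition), and finding the uniform-in-$n$ non-linear-quotients certificate for $I(\HH_n^c)$ at the critical homological degree. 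The $10$-vertex converse is mainly a matter of organizing and trusting the computation.
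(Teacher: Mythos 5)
Your plan has exactly the same skeleton as the paper's argument: co-chordality from Fr\"oberg, a restriction principle sending linear quotients of $\HS_k(I(G))$ to induced subgraphs, a non-linear-quotients certificate for $I(\HH_n^c)$ at one critical homological degree, and a finite computer check for the converse on at most $10$ vertices. The paper's proof of the theorem itself is one sentence precisely because the two steps you defer are its earlier results, so the comparison is really about how those are discharged. For the restriction principle, the paper does not use specialization $x_i\mapsto 1$ or the lcm-lattice; it uses the operation $I^{\leq m}$ (generators of $I$ dividing $m$): by a lemma of Herzog--Hibi--Zheng, $\HS_k(I(H^c))=\bigl(\HS_k(I(G^c))\bigr)^{\leq \prod_{x\in V(H)}x}$ for an induced subgraph $H$, and linear quotients of an equigenerated ideal passes to $I^{\leq m}$ (Lemma~\ref{lem:linear-quotients-restriction}); this yields Lemma~\ref{lem:linear-quotients-induced} cleanly and sidesteps the delicacy you rightly worry about. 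For the certificate, the critical degree is $k=n-4$ and the obstruction is much simpler than the cyclic ``admissibility'' obstruction among three or more generators that you propose to hunt for: the paper shows (Proposition~\ref{prop:gens-H} and Theorem~\ref{thm:HLQ-H}) that $\HS_{n-4}(\HH_n^c)$ has exactly \emph{two} minimal generators, $\prod_i x_i/(x_{n-2}x_{n-1})$ and $\prod_i x_i/(x_2x_n)$, whose mutual colon is generated by a degree-two monomial, so the ideal fails to have a linear resolution at all (Lemma~\ref{lem:no-LQ-example}); this is trivially uniform in $n$. So your approach is the right one, but as written it is a roadmap: the two ingredients you flag as the main obstacles are the entire mathematical content, and neither is carried out. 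If you complete them, the $I^{\leq m}$ mechanism is the standard way to make the restriction step rigorous, and you should look for a two-generator restriction (cut down $\HS_k$ by a suitable monomial $m$, as the paper does in Proposition~\ref{prop:LH-no-LQ} and Proposition~\ref{prop:TH-no-LQ}) rather than a three-generator cycle. Your description of the converse matches the paper, which simply cites the database computation.
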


    We then turn our attention to the possible patterns of $k$ where $\HS_k(I(G))$ fails to have linear quotients, where $G$ is a fixed co-chordal graph. 
    To simplify the notation, we will use $\HS_k(G)$ to denote the homological shift ideal $\HS_k(I(G))$. 
    It is known that if $G$ is co-chordal, then $\HS_0(G)$ and $\HS_1(G)$ have linear quotients (cf. \cite{Froberg} and \cite[Theorem 1.3]{FH23}). 
    Thus, we will only consider $\HS_k(G)$ where $k\geq 2$. 
    We present our main results, where we show that many patterns are possible, though it is noteworthy that not all patterns are, as will be seen in Theorem~\ref{thm:impossible-pattern}.

    \begin{theorem}[{Theorem~\ref{thm:HLQ-LH}}]\label{main-thm-1}
        Let $2\leq s\leq t$ be integers. Then, there exists a graph $G$ such that 
        \begin{enumerate}[label=(\roman*)]
            \item 
            $\HS_{k}(G)$ does not have linear quotients for $k\in [2,s)$;
            \item  
            $\HS_{k}(G)$ has linear quotients for $k\in [s,t]$;
            \item  
            $\HS_{k}(G) = 0$ iff $k\in (t,\infty)$.
        \end{enumerate}
    \end{theorem}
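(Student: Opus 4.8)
The plan is to realize the prescribed pattern by one explicit construction, assembled from a ``core'' graph responsible for the failures on $[2,s)$ and a padding operation that lengthens the range of homological degrees carrying linear quotients without disturbing the core. Throughout I work with $G$ through its complement, using the standard fact (Hochster's formula together with Fr\"oberg's theorem) that for a co-chordal graph $G$ the ideal $\HS_k(I(G))$ is squarefree with
\[
\mathcal{G}\bigl(\HS_k(I(G))\bigr)=\bigl\{\,\mathbf{x}_W:\ W\subseteq V(G),\ |W|=k+2,\ G^c[W]\ \text{disconnected}\,\bigr\},
\]
where $\mathbf{x}_W=\prod_{v\in W}x_v$; in particular $\pd I(G)$ is $2$ less than the size of a largest subset of $V(G)$ inducing a disconnected subgraph of $G^c$.

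The padding operation is $B\mapsto B\ast K_m$ (join with a clique). Since $(B\ast K_m)^c=B^c\sqcup\overline{K_m}$, which is chordal whenever $B^c$ is, this keeps $G$ co-chordal; it raises the projective dimension to $|V(B)|+m-2$ (the whole vertex set now induces a disconnected subgraph of the complement); and the displayed formula gives, for every $k$, $\mathcal{G}(\HS_k(I(B\ast K_m)))=\mathcal{G}(\HS_k(I(B)))\cup\mathcal{G}(J_k)$, where $J_k$ is generated by all squarefree monomials of degree $k+2$ in $V(B)\sqcup V(K_m)$ divisible by at least one $V(K_m)$-variable. I would then prove that this forces: $\HS_k(I(B\ast K_m))$ has linear quotients if and only if $\HS_k(I(B))$ does, whenever $\HS_k(I(B))\neq 0$; while $\HS_k(I(B\ast K_m))=J_k$ for $k>\pd I(B)$, which has linear quotients. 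Indeed $J_k$ is squarefree strongly stable once the $V(K_m)$-variables are taken smallest, hence has linear quotients; listing a linear-quotients order of $J_k$ first and the generators of $\HS_k(I(B))$ afterwards, the colon ideal at each $\mathbf{x}_W\in\mathcal{G}(\HS_k(I(B)))$ works out to $\mathfrak{p}+C_W$, where $\mathfrak{p}$ is generated by the $V(K_m)$-variables and $C_W$ is exactly the colon ideal $\mathbf{x}_W$ contributes inside $\HS_k(I(B))$; since $C_W$ lives in the disjoint set of $V(B)$-variables, $\mathfrak{p}+C_W$ is variable-generated iff $C_W$ is. Conversely, a $V(B)$-variable can enter a colon ideal of $\HS_k(I(B\ast K_m))$ only through a generator of $\HS_k(I(B))$, so any linear-quotients order of $\HS_k(I(B\ast K_m))$ restricts to one of $\HS_k(I(B))$.

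It then remains to supply, for each $s\geq 2$, a co-chordal graph $B_s$ with $\pd I(B_s)=s$, with $\HS_k(I(B_s))$ \emph{failing} to have linear quotients for every $k\in[2,s)$, and with $\HS_s(I(B_s))$ having linear quotients. For $s=2$ one takes $B_2=K_4$ (homological linear quotients, $\pd I=2$), and the construction degenerates to $G=K_{t+2}$. For $s\geq 3$ the core is drawn from the family $\{\HH_n^c\}$, with $n$ chosen so that its window of failures is exactly $\{2,\dots,s-1\}$ and $\pd I=s$: the failures would be established via the stronger assertion that $\HS_k(I(B_s))$ does not even have a \emph{linear resolution} in this range --- being generated in one degree, an ideal with linear quotients has a linear resolution, so it suffices to produce a nonzero Betti number off the linear strand, extracted from Hochster's formula applied to $\HS_k(I(B_s))$ on an explicitly understood simplicial complex --- and the recovery at $k=s$ would be proved by writing down an explicit lexicographic-type order and checking the colon ideals by an induction organized around the second-largest generator. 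Granting $B_s$, one sets $G:=B_s$ if $t=s$ and $G:=B_s\ast K_{t-s}$ if $t>s$; then $G$ is co-chordal, $\pd I(G)=t$, $\HS_k(I(G))$ has linear quotients for $k\in[s,t]$, and $\HS_k(I(G))$ fails to have linear quotients for $k\in[2,s)$, as required. (A little bookkeeping is needed to guarantee that for each $n$ the failure window and the projective dimension of $\HH_n^c$ land on the required values for every small $s$; if the family only supplies certain diagonal cores, an auxiliary one-vertex modification of the complement graph covers the remaining projective dimensions.)

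The main obstacle is the core construction for $s\geq 3$: one must exhibit a \emph{single family} of co-chordal graphs for which \emph{all} homological shift ideals of degrees $2,\dots,s-1$ simultaneously fail to have linear quotients while the one of degree $s$ recovers them, and certify these failures robustly enough (via non-linearity of the resolution) to push through uniformly in $s$; likewise, the explicit linear-quotients order witnessing the recovery must be found once and for all. By contrast, the padding step is a formal consequence of the disjointness of the variable sets of $I(B)$ and $I(K_m)$, and, given the core, the final assembly is routine.
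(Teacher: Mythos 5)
Your ``padding'' reduction (joining with a clique, so that the complement acquires $m$ isolated vertices) is essentially sound: the description of $\mingens(\HS_k(I(B\ast K_m)))$ as $\mingens(\HS_k(I(B)))$ together with all squarefree degree-$(k+2)$ monomials meeting $V(K_m)$ is correct, the auxiliary ideal $J_k$ is squarefree strongly stable for a suitable variable order, the colon computation $\mathfrak{p}+C_W$ goes through, and the converse direction is exactly the restriction lemma for induced subgraphs. But this reduction is not where the content of the theorem lies, and the part that carries all of the content --- the ``core'' graph $B_s$ with $\pd I(B_s)=s$, with $\HS_k(I(B_s))$ failing linear quotients for \emph{every} $k\in[2,s)$ and regaining them at $k=s$ --- is entirely deferred in your write-up. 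Worse, the family you name as the intended core, $\{\HH_n^c\}$, has the \emph{opposite} behaviour: $\HS_k(\HH_n^c)$ has linear quotients for all $2\le k<n-4$ and fails precisely at the top degree $k=n-4$ (Theorem~\ref{thm:HLQ-H}), i.e.\ its pattern is $T\cdots TF$, not $F\cdots FT$. The graphs that actually realize the required pattern are the $\LH_{n,r}^c$ (that is, $\HH_n^c$ with $r$ leaves attached at the vertex $x_{n-3}$); note that in this construction the lengthening of the range of good degrees is achieved by the leaves themselves, so no external padding is needed. Producing such a family and verifying both halves of its behaviour (an explicit description of the generators as in Proposition~\ref{prop:gens-LH}, a lex linear-quotients order for $n-4\le k\le n+r-4$ as in Proposition~\ref{prop:LH-LQ}, and the failure for $2\le k<n-4$ as in Proposition~\ref{prop:LH-no-LQ}) is the proof; without it the argument does not get off the ground.

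A second, more local, concern is your proposed certificate of failure: you plan to exhibit a nonzero Betti number off the linear strand of $\HS_k(I(B_s))$ itself, i.e.\ to show these ideals fail to have a linear \emph{resolution}. That is a strictly stronger statement than failure of linear quotients, and it is not what is established here: the failure is obtained by restricting $\HS_k$ to a carefully chosen squarefree monomial $m$ so that ${(\HS_k)}^{\le m}$ has exactly two generators $\frac{m}{x_ax_b},\frac{m}{x_cx_d}$ with disjoint ``missing pairs'' (Lemma~\ref{lem:no-LQ-example}), which kills linear quotients of $\HS_k$ via Lemma~\ref{lem:linear-quotients-restriction} without saying anything about the linear strand of the full ideal. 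If you insist on the stronger route you would need to prove more than is known, and for $\{r+2,\dots,s-1\}$ simultaneously; I would recommend adopting the restriction-to-a-two-generator-subideal device instead.
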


    \begin{theorem}[{Theorem~\ref{thm:HLQ-TH-graph}}]\label{main-thm-2}
        Let $2 \leq s\leq t$ be integers. Then, there exists a graph $G$ such that 
        \begin{enumerate}[label=(\roman*)]
            \item
            $\HS_{k}(G)$ has linear quotients for $k\in [2,s)$;
            \item
            $\HS_{k}(G)$ does not have linear quotients for $k\in [s,t]$;
            \item
            $\HS_{k}(G) = 0$ iff $k\in (t,\infty)$.
        \end{enumerate}
    \end{theorem}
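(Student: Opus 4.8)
The plan is to produce the graph required by the theorem as an explicit co-chordal graph $G = G_{s,t}$. Insisting that $G$ be co-chordal pays off twice. First, by Fr\"oberg's theorem $I(G)$ has a linear resolution, so for every $k$ the homological shift ideal $\HS_k(G)$ is generated in the single degree $k+2$. Second, combining this with Hochster's formula yields the combinatorial description that $\HS_k(G)$ is minimally generated by the squarefree monomials $\mathbf{x}^W = \prod_{x_i \in W} x_i$ with $|W| = k+2$ and $G^c[W]$ disconnected, which turns all three parts of the theorem into statements about induced subgraphs of the chordal graph $G^c$. The graph $G$ should be built so that $G^c$ contains an induced copy of $\HH_m$ for a value of $m$ chosen in terms of $s$, which injects the linear-quotient defect of $\HH_m^c$, together with further chordal attachments — simplicial vertices, pendant cliques, pendant paths — which (a) push the first bad homological degree up to $s$ and (b) raise $\pd I(G)$ to $t$. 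The preliminary step is to record two invariants of the family $\{\HH_n\}$ introduced above: the exact set of $k$ for which $\HS_k(\HH_n^c)$ fails to have linear quotients — together with the fact, which one checks directly for the graphs $\HH_n^c$, that this failure is already a failure of having a linear resolution, i.e.\ $\operatorname{reg}(\HS_k(\HH_n^c)) \geq k + 3$ — and the projective dimension of $I(\HH_n^c)$.

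Part (iii) is then a projective-dimension count: for co-chordal $G$ one has $\HS_k(G) \neq 0$ precisely when some $(k+2)$-subset $W$ has $G^c[W]$ disconnected, so $\pd I(G) = \max\{|W| : G^c[W] \text{ disconnected}\} - 2$, and the attachments are arranged so that this maximum equals $t + 2$, giving $\HS_k(G) = 0$ iff $k > t$. For parts (i) and (ii) I would split the minimal generators $\mathbf{x}^W$ of each $\HS_k(G)$ according to whether $W \subseteq V(\HH_m)$ or $W$ meets the attached part: generators of the first kind are governed by the corresponding homological shift ideal of $\HH_m^c$, while those of the second kind ought to be ``easy'', since attaching simplicial vertices and pendant structures only contributes generators of a polymatroidal, linear-quotients type — making this precise amounts to analysing how $\HS_k$ behaves under such attachments.

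For part (i), the good range $k \in [2,s)$ (which is empty when $s = 2$): I would show that each $\HS_k(G)$ has linear quotients by combining the classical facts that $\HS_0$ and $\HS_1$ of a co-chordal graph have linear quotients (Fr\"oberg; \cite[Theorem~1.3]{FH23}), the choice of $m$, which guarantees that the part of $\HS_k(G)$ generated by the $\mathbf{x}^W$ with $W \subseteq V(\HH_m)$ has linear quotients for all $k < s$, and the linear-quotients property of the ``easy'' part. The delicate point is that one must produce a single admissible order on all the minimal generators of $\HS_k(G)$ whose restriction to each block is admissible; this is a bookkeeping argument, ordering the generators first by how many attached vertices they involve and then refining within each block.

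Part (ii), the bad range $k \in [s,t]$, is the crux. Since $G$ is co-chordal, $\HS_k(G)$ is equigenerated in degree $k+2$, so it has linear quotients only if it has a linear resolution; hence it suffices to prove $\operatorname{reg}(\HS_k(G)) \geq k + 3$ for every $k \in [s,t]$. Now $\HS_k(G)$ is a squarefree monomial ideal, namely the Stanley--Reisner ideal of the simplicial complex $\Gamma_k$ whose minimal non-faces are exactly the $(k+2)$-subsets $W$ of $V(G)$ with $G^c[W]$ disconnected, so by Hochster's formula it is enough to exhibit, for each such $k$, a vertex subset $W$ for which the induced subcomplex $\Gamma_k|_W$ has nonzero reduced homology in some degree $\geq k+1$. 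The plan is to build such a $W$ from the vertices of the induced $\HH_m$, taking as the base case the inequality $\operatorname{reg}(\HS_{k_0}(\HH_m^c)) \geq k_0 + 3$ recorded above, and then to induct on $k - k_0$, showing that each attachment step extends the range of homological degrees at which linearity fails upward by one — so that the single bad degree $k_0$ of $\HH_m^c$ is stretched into the full interval $[s,t]$ for $G$. The main obstacle is precisely this uniformity: one must track the relevant reduced homology of $\Gamma_k$ for all $k$ in the interval simultaneously, and rule out the possibility that some attachment, by adding extra minimal non-faces to $\Gamma_k$, accidentally destroys the obstruction. I would mitigate this by keeping the attached structure as rigid as possible — pendant cliques and paths, one simplicial vertex at a time — so that the extra non-faces are easy to enumerate and the induction stays manageable.
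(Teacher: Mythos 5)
Your high-level strategy --- take a co-chordal $G$ whose complement contains an induced $\HH_m$, attach extra structure to push the first bad homological degree up to $s$ and the projective dimension up to $t$, and exploit restriction to induced subgraphs --- matches the paper's. But there is a genuine gap at the center of the proposal: the construction is never specified, and the attachments you name (leaves, pendant cliques, pendant paths, one simplicial vertex at a time) are precisely the ones that do \emph{not} produce the pattern required by this theorem. The paper's own Section~\ref{sec:H-and-LH} shows that attaching $r$ leaves to $\HH_n^c$ (the family $\LH_{n,r}^c$) yields the \emph{opposite} pattern --- $\HS_k$ fails to have linear quotients for $2\le k<n-4$ and has linear quotients for $n-4\le k\le n+r-4$ --- while attaching full cones ($\CH_{n,r}^c$) yields a single bad degree. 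The pattern of the present theorem requires the specific ``almost-cone'' graph $\ACH_{n,r}^c$ (in the complement: $r$ new pairwise nonadjacent vertices, each joined to $x_{r+1},\dots,x_{r+n-3}$, i.e., to all but the last three vertices of the embedded $\HH_n^c$), with $n=s+4$ and $r=t-s$. Since the paper's examples demonstrate that natural attachments can \emph{destroy} the obstruction rather than propagate it, your inductive step for part~(ii) --- ``each attachment step extends the range of bad degrees upward by one'' --- is exactly the assertion that needs proof, and it is false for the attachments you propose.

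The mechanisms also differ in ways that matter. For the bad range the paper does not run an induction on attachments or track reduced homology of the complexes $\Gamma_k$ uniformly in $k$: for each $k\in[n-4,n+r-4]$ it restricts, via Lemma~\ref{lem:linear-quotients-induced}, to the induced subgraph $\ACH_{n,k-n+4}^c$, where $\HS_k$ is generated by exactly two monomials $\frac{\prod x_i}{x_ax_b}$ and $\frac{\prod x_i}{x_cx_d}$ with $\{a,b\}\cap\{c,d\}=\emptyset$, which visibly has no linear resolution (Lemma~\ref{lem:no-LQ-example}); no uniformity issue arises. For the good range, your claim that the generators meeting the attached vertices are ``polymatroidal/easy'' and that the two blocks of generators can be ordered independently is not substantiated: the paper's verification (Lemmas~\ref{lem:f-type-1}--\ref{lem:f-type-5}) is a single lex order with a genuine cross-type case analysis, and the delicate case --- $f$ of type~4 against $g$ of type~5, Lemma~\ref{lem:f-type-4-g-type-5} --- is exactly where the witness monomial must change type in a way that depends on whether $k<n-4$, i.e., where the threshold $s$ actually comes from. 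As written, the proposal defers both the choice of graph and the two verifications that constitute the proof.
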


    \begin{theorem}[{Theorem~\ref{thm:HLQ-CH-graph}}]\label{main-thm-3}
        Let $2\leq s\leq t$ be integers. Then, there exists a graph $G$  such that 
        \begin{enumerate}[label=(\roman*)]
            \item
            $\HS_{s}(G)$ does not have linear quotients;
            \item
            $\HS_{k}(G)$ has linear quotients for $k\in [2,s) \cup (s,t]$;
            \item
            $\HS_{k}(G) = 0$ iff $k\in (t,\infty)$.
        \end{enumerate}
    \end{theorem}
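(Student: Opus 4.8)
The plan is to produce the graph by gluing a small ``defect gadget'', solely responsible for the single failure at homological degree $s$, onto a ``filler'' whose only role is to stretch the resolution out to length $t$ while contributing only shift ideals that have linear quotients. To keep everything co-chordal after gluing I would work with the graph join: since $(G_1\vee G_2)^c = G_1^c \sqcup G_2^c$ and a disjoint union of chordal graphs is chordal, $G_1\vee G_2$ is co-chordal as soon as $G_1$ and $G_2$ are, and $I(G_1\vee G_2) = I(G_1) + I(G_2) + \mathfrak{m}_{V(G_1)}\mathfrak{m}_{V(G_2)}$ admits a clean (Betti-splitting–type) description of its minimal free resolution, hence of each $\HS_k$. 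The basic building block is the special case where $G_2$ is a single vertex $w$ (``coning'', equivalently adding an isolated vertex to the complement): there $\HS_k(I(G\vee w)) = \HS_k(I(G)) + w\cdot I_{k+1}(V(G)) + w\cdot\HS_{k-1}(I(G))$, where $I_{k+1}$ is the squarefree Veronese ideal, and $\pd_S I(G\vee w) = \pd_S I(G)+1$ once $\pd_S I(G)$ is large enough. So the first step is to record these formulas and, by iterating the coning step (or by Auslander--Buchsbaum together with a depth computation, or by the combinatorial formula for the projective dimension of an edge ideal with linear resolution), show that attaching the right number of universal vertices raises the projective dimension of the edge ideal of the defect gadget up to exactly $t$; this gives item~(iii). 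In practice this will be packaged as an explicit family $\CH_{s,t}$, presented as the complement of an explicitly drawn chordal graph.

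For the defect gadget I would extract from the graphs $\HH_m^c$ (equivalently $\ACH_m$, or the family built in the proof of Theorem~\ref{thm:HLQ-TH-graph} with $s=t$) a co-chordal graph $B$ such that $\HS_k(B)$ has linear quotients for every admissible $k$ except a single value $k_0$, with $k_0$ and $\pd_S I(B)$ controlled in terms of the number of vertices; choosing the parameter so that $k_0 = s$ places the defect where we want it. Item~(ii) then splits into two ranges. For $k\in[2,s)$ one transports the linear-quotient orders of $\HS_k(B)$ across the join/coning formula, checking that appending the $w$-multiples of generators of $\HS_{k-1}$ and the squarefree Veronese generators does not spoil linearity of the colon ideals. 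For $k\in(s,t]$ one must instead build linear-quotient orders on the $\HS_k$ of the assembled graph essentially from scratch; I expect this to be routine because in that range every generator is, up to the combinatorics of the Veronese pieces coming from the filler vertices, a monomial multiple of a generator of $\HS_j(B)$ with $j>s$ (all of which are well behaved), so a lexicographic order, or one induced by a perfect elimination order on the complement, should work. This is the part where I would grind through the bookkeeping.

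The main obstacle is item~(i): proving that $\HS_s$ of the \emph{assembled} graph still fails to have linear quotients, i.e.\ that neither the join with the filler nor the coning vertices can repair the defect. This is genuinely delicate, since the coning formula dumps many new generators into $\HS_s$ — those divisible by a new vertex, including the squarefree Veronese generators — and an ingenious reordering exploiting them could a priori manufacture a linear-quotient order. To preclude this I would seek an order-\emph{independent} obstruction: having linear quotients is inherited by monomial localizations (and by the relevant restrictions), so it suffices to exhibit inside the generators of $\HS_s$ a fixed finite sub-configuration that obstructs linear quotients regardless of the ordering — for instance a triple of generators for which no single-variable colon relation can be completed in any order — and to check that the generators witnessing this obstruction already live in the defect gadget $B$ and survive, unchanged, the monomial localization that kills the filler and the new vertices. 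The required ``local'' obstruction in $B$ is a finite, checkable claim, presumably the very computation that powers the failure in Theorem~\ref{thm:conjecture} and Theorem~\ref{thm:HLQ-TH-graph}. Once it is pinned down combinatorially and its stability under localization is verified, items~(i)--(iii) assemble into the statement.
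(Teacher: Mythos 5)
Your overall architecture (a small co-chordal ``defect'' graph realizing the single failure at $k=s$, extended by extra vertices to push the projective dimension out to $t$) matches the paper's, which takes $G=\CH_{s+4,\,t-s}^{c}$. But your specific filler mechanism --- joining with universal vertices, i.e.\ adding isolated vertices to the complement --- has a quantitative flaw that breaks the construction for $t\in\{s+1,s+2\}$. Your own formula shows why: $\HS_k(I(G\vee w))$ contains $w\cdot I_{k+1}(V(G))$, the full squarefree Veronese in the old variables times $w$, and this is nonzero for all $k\leq |V(G)|-1$. Hence one coning step sends $\pd I(G)$ to $|V(G)|-1$, not to $\pd I(G)+1$; your caveat ``once $\pd_S I(G)$ is large enough'' means $\pd I(G)=|V(G)|-2$, which the defect gadget does not satisfy ($\HH_{s+4}^{c}$ has $s+4$ vertices and $\pd I(\HH_{s+4}^{c})=s$, not $s+2$). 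So the first coning jumps the projective dimension from $s$ to $s+3$, and iterating gives $\pd=s+2+j$ after $j\geq 1$ universal vertices: the targets $t=s+1$ and $t=s+2$ are unreachable. The paper avoids this by attaching vertices that are \emph{not} universal: in $\CH_{n,r}^{c}$ each new vertex is adjacent to all vertices of $\HH_n^{c}$ except the two ``leaves'' $x_{r+n-1},x_{r+n}$ (and the new vertices are mutually non-adjacent in the complement), which is calibrated so that each new vertex raises $\pd$ by exactly one. This is not a join, so none of your join/Betti-splitting formulas apply; the paper instead reclassifies the minimal generators of $\HS_k(\CH_{n,r}^{c})$ into explicit types (Proposition~\ref{prop:gens-CH}) and verifies linear quotients for the lex order type by type.

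Two smaller points. First, you treat item~(i) as the ``genuinely delicate'' step and propose hunting for an order-independent obstruction; in fact it is the easiest step. Since the defect gadget $\HH_{s+4}^{c}$ sits inside the assembled graph as an induced subgraph, Lemma~\ref{lem:linear-quotients-induced} (restriction $I^{\leq m}$ to the product of its vertices, via \cite[Lemma~4.4]{HHZ04Dirac}) immediately transfers the failure of linear quotients at $k=s$ from the gadget to the whole graph, no matter what new generators the extra vertices contribute --- this is exactly the one-line proof of Proposition~\ref{prop:CH-no-LQ}. Second, for $k\in(s,t]$ your claim that every generator is ``up to the Veronese pieces a multiple of a generator of $\HS_j(B)$ with $j>s$'' is vacuous for the gadget, since $\HS_j(\HH_{s+4}^{c})=0$ for all $j>s$; in that range the generators come entirely from the new vertices, and in the paper's (non-join) construction verifying linear quotients there is a genuine case analysis (Lemmas~\ref{lem:f-type-1}--\ref{lem:f-type-5}), not routine bookkeeping inherited from $B$. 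If you replace the universal-vertex filler by the paper's almost-universal one, you must redo that analysis; if you keep universal vertices, you must restrict to $t=s$ or $t\geq s+3$ and the theorem as stated is not proved.
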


    The paper is structured as follows. Section~\ref{sec:prem} provides the background in commutative algebra and graph theory, particularly in the theory of homological shift ideals and the property of having linear quotients. Section~\ref{sec:H-and-LH} introduces the family of graphs $\{\HH_n\}_{n\geq 6}$ and constructs from them a family of graphs that helps us prove Theorem~\ref{main-thm-1}. Section~\ref{sec:ACH-CH} is about proving Theorems~\ref{main-thm-2} and~\ref{main-thm-3}. Finally, in Section~\ref{sec:possible-patterns}, we prove  Theorem~\ref{thm:conjecture} and the possible patterns of integers $k$ and co-chordal graphs $G$ such that $\HS_k(G)$ has linear quotients.

\section*{Acknowledgements}

    The first and second authors acknowledge support from the Infosys Foundation. 
    The third author was supported by NSF grants DMS~2101671 and DMS~2349623. 
    The authors are grateful to the anonymous referee for proofreading the paper carefully.

\section{Preliminaries}\label{sec:prem}

    \subsection{Graph theory terminology}

    We recall some basic graph theory. 
    All graphs in this paper are assumed to be finite simple graphs, i.e., no loops and double edges are allowed. 
    Let $G=(V(G),E(G))$ be a graph where $V(G)=\{z_1,\dots, z_n\}$. 
    The \emph{complement} of $G$, denoted by $G^c$, is the graph with vertex set $V(G^c)=V(G)$ and edge set $E(G^c)=\{z_iz_j\colon z_iz_j\notin E(G)\}$. An \emph{induced subgraph} $H$ of $G$ is a subgraph of $G$ such that $E(H)=\{z_iz_j\colon z_i,z_j\in V(H) \text{ and } z_iz_j\in E(G)\}$. We note that an induced subgraph is uniquely defined by its vertex set.

    For any vertex $z\in V(G)$, a \emph{neighbor} of $z$ in $G$ is a vertex $u$ such that $zu\in E(G)$. The set of all neighbors of $z$ in $G$, denoted by $N_G(z)$. A \emph{complete graph} is a graph such that any two vertices form an edge. Using this terminology, we can now define perfect elimination orderings.

    \begin{definition}
        Let $G$ be a graph with $V(G)=\{z_1,\dots, z_n\}$. An ordering $z_1>\cdots> z_n$ is called a \emph{perfect elimination ordering} in $G$ if $N_{G_i}(z_i)$ induces a complete subgraph of $G$, where $G_i$ is the induced subgraph of $G$ on the vertices $\{z_i,z_{i+1},\dots, z_n\}$.
    \end{definition}

    A graph is said to be \emph{chordal} if it has a perfect elimination ordering. We remark that this is not the original definition of chordal graphs, but rather a classical result proved by Dirac \cite{Dirac1961OnRC}. A graph is said to be \emph{co-chordal} if its complement is chordal.

\subsection{Homological shift ideals}

Let $I$ be a monomial ideal of $S=\Bbbk[x_1,\dots, x_n]$ where $\Bbbk$ is a field. A \emph{(graded) free resolution} of $I$ over $S$ is a graded chain complex of free $S$-modules
\[
\mathcal{F}\colon \cdots \xrightarrow{\partial} F_r\xrightarrow{\partial} F_{r-1} \xrightarrow{\partial} \cdots \xrightarrow{\partial} F_1\xrightarrow{\partial} F_0 \to 0
\]
such that $H_i(\mathcal{F})=0$ for all $i>0$ and $H_0(\mathcal{F})\cong I$. Furthermore, $\mathcal{F}$ is called \emph{minimal} if $\partial(F_i)\subseteq (x_1,\dots ,x_n)F_{i-1}$ all $i\geq 1$. The \emph{projective dimension} of $I$, denoted by $\pd I$, is the length of its minimal free resolution. The ideal $I$ is said to have \emph{linear resolution} if all the matrices representing the differentials of the minimal resolution of $I$ have linear entries. In particular, if $I$ is generated in degree $d$ and has linear resolution, then $\HS_k(I)$ is generated in degree $d+k$.

It is a celebrated theorem of Fr\"oberg that the edge ideal $I(G^c)$ has linear resolution if and only if $G$ is chordal (cf. \cite{Froberg}). In particular, this implies that all the homological shift ideals of $I(G^c)$ are generated in the same degree. We record an explicit description of the generators of these ideals in the following result.

\begin{lemma}[{\cite[Theorem 4.1]{HMRG20}}] \label{lem:gens-HS}
    Let $G$ be the chordal
    graph where  $z_1 > z_2 > \cdots > z_n$ is a perfect elimination ordering for $G$. Set $I=I(G^c)$. Then
    \begin{multline*}
        \HS_k(I) = \big( z_{i_1}z_{i_2}\dots z_{i_{k+2}} \colon 1\leq i_1<i_2<\cdots < i_{k+2} \leq n,\\
        \text{and there exists } t< k+2  \text{ such that } z_{i_t}z_{i_l} \notin E(G) \text{ for each } t<l \leq k+2  \big).
    \end{multline*}
\end{lemma}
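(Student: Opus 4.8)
The plan is to read off the generators of $\HS_k(I)$, where $I:=I(G^c)$, from the minimal free resolution furnished by linear quotients. By Fr\"oberg's theorem $I$ has a linear resolution, so every $\HS_k(I)$ is generated in degree $k+2$; consequently a squarefree monomial of degree $k+2$ lies in $\HS_k(I)$ exactly when it is one of its minimal generators, and it suffices to identify that minimal generating set.

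First I would establish that $I$ has linear quotients with respect to the ordering of its minimal generators $z_az_b$ (with $a<b$ and $z_az_b\notin E(G)$) in which $z_az_b$ precedes $z_{a'}z_{b'}$ whenever $b<b'$, ties being broken by requiring $a<a'$. By the standard criterion this reduces to showing that $(u_1,\dots,u_{j-1}):u_j$ is generated by variables for every $j$, and the only combinatorial input required is the clique property of a perfect elimination ordering, which I would phrase as: if $p<q<r$ and $z_qz_r\notin E(G)$, then $z_pz_q\notin E(G)$ or $z_pz_r\notin E(G)$. A short case analysis, according to where $a$ and $b$ lie relative to the two indices of an earlier generator disjoint from $u_j$, produces in each case a still-earlier generator showing that the degree-two part of the colon ideal is redundant. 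Reading off the colon ideals then yields the explicit description
\[
    \operatorname{set}(z_az_b)=\{\,c<b:\ c\neq a,\ z_cz_a\notin E(G)\,\}\ \cup\ \{\,c<a:\ z_cz_b\notin E(G)\,\}.
\]

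Since $I$ is squarefree and generated in a single degree, each set $\operatorname{set}(z_az_b)$ is disjoint from $\{a,b\}$, so the Herzog--Takayama iterated mapping cone built from these linear quotients is already a \emph{minimal} free resolution: its $i$-th free module is generated in degree $i+2$, leaving no room for a unit entry in any differential. Hence $\HS_k(I)$ is generated by the monomials $z_az_b\prod_{c\in\sigma}z_c$ with $\sigma\subseteq\operatorname{set}(z_az_b)$ and $|\sigma|=k$, and it remains to check that this family coincides with the one in the statement. For the inclusion $\subseteq$: such a monomial is squarefree of degree $k+2$, and since $\operatorname{set}(z_az_b)\subseteq\{c:c<b\}$ its largest-indexed variable is $z_b$; writing it as $z_{i_1}\cdots z_{i_{k+2}}$ with $i_1<\cdots<i_{k+2}$ and letting $t$ be the position of $a$, we have $t<k+2$, and the description of $\operatorname{set}(z_az_b)$ shows that $z_a$ is non-adjacent to $z_b$ and to every $z_c$ with $c\in\sigma$ and $c>a$, hence to all $z_{i_l}$ with $l>t$. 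For the inclusion $\supseteq$: given $z_{i_1}\cdots z_{i_{k+2}}$ with a witness $t$, set $a=i_t$, $b=i_{k+2}$ and $\sigma=\{i_1,\dots,i_{k+1}\}\setminus\{i_t\}$; the witness property gives $z_az_b\notin E(G)$ and places each $i_l\in\sigma$ with $l>t$ into $\operatorname{set}(z_az_b)$, while for $i_l\in\sigma$ with $l<t$ one has $i_l<a<b$ and $z_az_b\notin E(G)$, so the perfect elimination property forces $z_{i_l}z_a\notin E(G)$ or $z_{i_l}z_b\notin E(G)$, again placing $i_l$ into $\operatorname{set}(z_az_b)$.

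I expect the main obstacle to be the middle step: choosing the correct linear-quotients order and computing $\operatorname{set}(z_az_b)$ exactly. Once this is in hand, everything downstream is driven by the single combinatorial fact above, applied once in each direction of the final comparison, so the rest is essentially bookkeeping.
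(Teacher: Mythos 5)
The paper does not prove this lemma at all --- it is quoted verbatim from \cite[Theorem 4.1]{HMRG20} --- so there is no internal proof to compare against; I can only assess your argument on its own terms, and it is correct. Your route (order the quadratic generators of $I(G^c)$ by largest index, then smallest index; verify linear quotients using only the clique property of the perfect elimination ordering; compute $\operatorname{set}(z_az_b)$ explicitly; invoke the Herzog--Takayama mapping-cone resolution, which is minimal here because every $F_i$ is generated in the single degree $i+2$; and finally match $\{z_az_b\,z_\sigma : \sigma\subseteq\operatorname{set}(z_az_b),\ |\sigma|=k\}$ against the stated generating set) is the standard one and almost certainly the one taken in the cited source. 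I checked the two places where such an argument can silently fail and both are fine: (1) the linear-quotients verification --- for an earlier generator $z_cz_d$ disjoint from $\{a,b\}$, applying the elimination property at $\min(c,a)$ always produces an earlier generator whose colon with $z_az_b$ is a single variable dividing $z_cz_d$; and (2) the formula for $\operatorname{set}(z_az_b)$, where your two constituent sets are exactly the indices $e$ with $z_ez_a$, respectively $z_ez_b$, a generator preceding $z_az_b$ in your order, and both are disjoint from $\{a,b\}$ so the resulting multidegrees are squarefree. The final two inclusions each use the elimination property exactly once, as you say. The only presentational gap is that the linear-quotients case analysis and the minimality of the mapping-cone resolution are asserted rather than written out, but both are routine and standard (\cite[Theorem 8.2.15]{HerzogHibiBook} and its surrounding material), so the argument stands.
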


\subsection{Linear quotients and linear resolutions}

For any integer $n$, we use $[n]$ to denote the set $\{1,\dots, n\}$. 
Given a monomial ideal $I$, we denote by $\mingens(I)$ the minimal monomial generating set of $I$. 
It is usually difficult to show that a given monomial ideal $I$ has linear resolution directly. Instead, it is more common to show that $I$ has stronger properties that implies having linear resolution. We say that $I$ has \emph{linear quotients} if we can order $\mingens(I)$ as $m_1 > \cdots > m_q$ such that the ideal $(m_1,m_2,\dots, m_{i})\colon m_{i+1}$ is generated by variables for all $i\in [q-1]$. We recall the following well-known result.

\begin{theorem}[{\cite[Theorem 8.2.15]{HerzogHibiBook}}]\label{thm:LQ-implies-LR}
    If a monomial ideal $I$ is generated by monomials of the same degree and has linear quotients, then it has linear resolution.
\end{theorem}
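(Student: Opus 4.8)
The plan is to induct on the number $q$ of minimal generators, building the resolution of $I$ by iterated mapping cones. Fix an ordering $m_1 > \cdots > m_q$ of $\mingens(I)$ witnessing linear quotients, all of degree $d$, and set $I_j = (m_1,\dots,m_j)$, so that $I_1 \subseteq \cdots \subseteq I_q = I$. Since an ideal has a linear resolution exactly when its cyclic quotient does (up to a homological shift), I would instead show, by induction on $j$, that $S/I_j$ has a linear resolution. The base case $j=1$ holds because $I_1 = (m_1) \cong S(-d)$ is free. For $j \geq 2$, the containment $I_{j-1} \subseteq I_j$ together with the identity $(m_j) \cap I_{j-1} = m_j\cdot(I_{j-1}:m_j)$ gives the short exact sequence
\[
0 \longrightarrow \bigl(S/(I_{j-1}:m_j)\bigr)(-d) \xrightarrow{\ \cdot m_j\ } S/I_{j-1} \longrightarrow S/I_j \longrightarrow 0 .
\]

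The first key input is that linear quotients means $I_{j-1}:m_j = (x_{i_1},\dots,x_{i_r})$ for some variables, so $S/(I_{j-1}:m_j)$ is resolved by the Koszul complex on these variables; hence $\bigl(S/(I_{j-1}:m_j)\bigr)(-d)$ has a free resolution whose $\ell$-th term is generated in degree $d+\ell$. The second input is the inductive hypothesis: $S/I_{j-1}$ has a free resolution whose $0$-th term is $S$ and whose $\ell$-th term (for $\ell \geq 1$) is generated in degree $d + \ell - 1$. I would then take the mapping cone of a comparison map between these two resolutions lifting the injection in the sequence above; the general fact that the cone of a lift of an injection is a free resolution of the cokernel produces a (not necessarily minimal) free resolution of $S/I_j$ whose $\ell$-th term, for $\ell \geq 1$, is a direct sum of two modules each generated in degree $d + \ell - 1$. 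Because the graded Betti numbers of a module are bounded, in each internal degree, by the ranks of the terms of any free resolution, this forces $\beta_{\ell,t}(S/I_j) = 0$ whenever $\ell \geq 1$ and $t \neq d + \ell - 1$; that is, $S/I_j$, and hence $I_j$, has a linear resolution. The case $j = q$ is the theorem.

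The heart of the matter is just the two inputs above; everything else is degree bookkeeping. The one genuine subtlety — and the reason I phrase the conclusion through vanishing of Betti numbers rather than claiming the mapping cone itself is minimal — is that the cone need not be a minimal resolution. If one prefers to avoid mapping cones, an essentially equivalent route runs the standard Castelnuovo--Mumford regularity estimates along the same short exact sequence: from $\operatorname{reg}\bigl(S/(I_{j-1}:m_j)\bigr) = 0$ one gets $\operatorname{reg}(S/I_j) \leq \max\{\operatorname{reg}(S/I_{j-1}),\, d-1\}$, and since $\operatorname{reg}(S/I_1) = d - 1$, induction yields $\operatorname{reg}(S/I) = d - 1$, which is precisely the statement that $I$ (being generated in degree $d$) has a linear resolution.
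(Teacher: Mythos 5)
The paper offers no proof of this statement; it is imported verbatim from Herzog--Hibi \cite[Theorem 8.2.15]{HerzogHibiBook}. Your argument is correct and is essentially the standard proof from that source: induct along the filtration $I_1\subseteq\cdots\subseteq I_q$, use the short exact sequence $0\to (S/(I_{j-1}:m_j))(-d)\to S/I_{j-1}\to S/I_j\to 0$ with the Koszul resolution of the variable-generated colon ideal, and read off linearity from the (possibly non-minimal) mapping cone or, equivalently, from the regularity bound --- so there is nothing to compare beyond noting the match.
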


For a monomial ideal $I$ and a monomial $m$, we let $I^{\leq m}$ denote the ideal generated by all minimal monomial generators of $I$ that divide $m$. In particular, $I^{\leq m}$ is a monomial subideal of $I$. Many properties of $I$ can be inherited by $I^{\leq m}$(cf. \cite{BPS98, HHZ04Dirac, CHM24}). Having linear quotients is one of them, provided that $I$ is generated in the same degree.

\begin{lemma}[{\cite[Proposition 2.6]{HMRG20}}]\label{lem:linear-quotients-restriction}
    Let $I$ be a monomial ideal generated by monomials of the same degree. If $I$ has linear quotients, then so does $I^{\leq m}$ for any monomial $m$. 
\end{lemma}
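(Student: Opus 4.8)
The plan is to take any linear-quotients ordering of $\mingens(I)$, restrict it to the generators dividing $m$, and verify that this restricted sequence still satisfies the colon condition. Since $I$ is generated in a single degree, no minimal generator divides another, so $\mingens(I^{\leq m})$ is precisely $\{u\in\mingens(I)\colon u\mid m\}$. Fix an ordering $m_1>\cdots>m_q$ of $\mingens(I)$ with linear quotients and let $m_{i_1}>\cdots>m_{i_p}$ be the subsequence of those $m_i$ that divide $m$; this is the candidate ordering for $I^{\leq m}$.

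I would first recall the standard combinatorial reformulation of linear quotients for an ideal generated in one degree: the ordering $m_1>\cdots>m_q$ has linear quotients if and only if for all $1\leq a<b\leq q$ there exist an index $k<b$ and a variable $x$ such that $m_k/\gcd(m_k,m_b)=x$ and $x\mid m_a/\gcd(m_a,m_b)$. (This is immediate from $(m_1,\dots,m_{b-1})\colon m_b=\sum_{k<b}\bigl(m_k/\gcd(m_k,m_b)\bigr)$ together with the fact that an ideal of this form is generated by variables exactly when each generator $m_k/\gcd(m_k,m_b)$ is divisible by one of the generators that happens to be a single variable.) Given $a<b$ with $m_{i_a},m_{i_b}\in\mingens(I^{\leq m})$, apply this reformulation to the pair $i_a<i_b$ in the full ordering to obtain $k<i_b$ and a variable $x$ with $m_k/\gcd(m_k,m_{i_b})=x$ and $x\mid m_{i_a}/\gcd(m_{i_a},m_{i_b})$.

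The key step, and essentially the only content of the proof, is to show that this witness $m_k$ also divides $m$, so that $k=i_\ell$ for some $\ell$, and then $i_\ell=k<i_b$ forces $\ell<b$, which is exactly what is needed for the restricted ordering. To see $m_k\mid m$, compare exponents variable by variable. From $m_k=x\cdot\gcd(m_k,m_{i_b})$ one deduces that the exponent of $x$ in $m_k$ equals one plus the exponent of $x$ in $m_{i_b}$; from $x\mid m_{i_a}/\gcd(m_{i_a},m_{i_b})$ one deduces that the exponent of $x$ in $m_{i_a}$ is at least one plus the exponent of $x$ in $m_{i_b}$, hence at least the exponent of $x$ in $m_k$, and since $m_{i_a}\mid m$ this bounds the exponent of $x$ in $m_k$ by that in $m$. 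For every other variable $y$, the exponent of $y$ in $m_k$ is at most that in $m_{i_b}$, hence at most that in $m$ because $m_{i_b}\mid m$. Therefore $m_k\mid m$.

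The reformulation bookkeeping and the restriction argument are routine; I expect the only place requiring care is this exponent comparison at the distinguished variable $x$, which is precisely where the hypotheses $m_{i_a}\mid m$ and $m_{i_b}\mid m$ are used. Once $m_k\mid m$ is established, the verification that $m_{i_1}>\cdots>m_{i_p}$ gives linear quotients for $I^{\leq m}$ is complete.
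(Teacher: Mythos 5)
The paper does not prove this lemma itself --- it is quoted from \cite[Proposition 2.6]{HMRG20} --- so there is no in-paper proof to compare against; your argument is correct and is essentially the standard one from that reference: restrict a linear-quotients order to the generators dividing $m$, and check via the exponent comparison at the distinguished variable $x$ (using $m_{i_a}\mid m$ there and $m_{i_b}\mid m$ elsewhere) that the witness $m_k$ again divides $m$. The reformulation of linear quotients you invoke and the identification $\mingens(I^{\leq m})=\{u\in\mingens(I)\colon u\mid m\}$ (valid because $I$ is equigenerated) are both used correctly, so the proposal is complete.
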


We present an application of this lemma.

\begin{lemma}\label{lem:linear-quotients-induced}
    Let $G$ be a chordal graph and $H$ an induced subgraph. If $\HS_k(I(G^c))$ has linear quotients for some integer $k$, then so does $\HS_k(I(H^c))$.
\end{lemma}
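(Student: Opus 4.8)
The plan is to exhibit $\HS_k(I(H^c))$ as a restriction $\HS_k(I(G^c))^{\leq m}$ of $\HS_k(I(G^c))$ by a suitable squarefree monomial $m$, and then quote Lemma~\ref{lem:linear-quotients-restriction}. The only inputs needed are the explicit generators from Lemma~\ref{lem:gens-HS} and the compatibility of perfect elimination orderings with passing to induced subgraphs.

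First I would fix a perfect elimination ordering $z_1 > \cdots > z_n$ of $G$ and check that its restriction to $V(H)$ is a perfect elimination ordering of $H$. Writing $V(H) = \{z_{j_1}, \dots, z_{j_m}\}$ with $z_{j_1} > \cdots > z_{j_m}$, one has $N_{H_i}(z_{j_i}) = N_{G_{j_i}}(z_{j_i}) \cap V(H)$ because $H$ is \emph{induced}, and the intersection of a clique of $G$ with $V(H)$ is a clique of $H$, again because $H$ is induced; so $N_{H_i}(z_{j_i})$ induces a complete subgraph of $H$. In particular $H$ is chordal, so $\HS_k(I(H^c))$ makes sense, and Lemma~\ref{lem:gens-HS} applies to $G$ (with the ordering on $V(G)$) and to $H$ (with the restricted ordering on $V(H)$). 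Set $m := \prod_{z \in V(H)} z$. Comparing the two generating sets: a squarefree monomial $z_{i_1}\cdots z_{i_{k+2}}$ with all of its variables in $V(H)$ satisfies the defining nonedge condition of Lemma~\ref{lem:gens-HS} for $H$ if and only if it satisfies it for $G$, since $z_{i_t}z_{i_l}\notin E(H) \iff z_{i_t}z_{i_l}\notin E(G)$ for vertices of the induced subgraph $H$. Since every monomial in these lists has degree $k+2$, each list is automatically the minimal monomial generating set of the corresponding ideal, and ``divides $m$'' is exactly ``all variables lie in $V(H)$''. Hence $\mingens(\HS_k(I(H^c)))$ is precisely the set of elements of $\mingens(\HS_k(I(G^c)))$ that divide $m$; that is, $\HS_k(I(H^c)) = \HS_k(I(G^c))^{\leq m}$, viewing both as ideals of $S$.

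Finally, since $I(G^c)$ has a linear resolution by Fr\"oberg's theorem, $\HS_k(I(G^c))$ is generated in the single degree $k+2$ (equivalently, this is immediate from Lemma~\ref{lem:gens-HS}), so Lemma~\ref{lem:linear-quotients-restriction} applies to the restriction by $m$ and yields that $\HS_k(I(G^c))^{\leq m} = \HS_k(I(H^c))$ has linear quotients. I expect the only delicate points to be purely organizational: using the induced-subgraph hypothesis twice (once to restrict the perfect elimination ordering, once to match the nonedge conditions in Lemma~\ref{lem:gens-HS}), and the harmless bookkeeping of regarding $\HS_k(I(H^c))$ as an ideal of $S$ rather than of $\Bbbk[V(H)]$ — which does not affect having linear quotients. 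If $|V(H)| < k+2$ then both ideals are zero and there is nothing to prove.
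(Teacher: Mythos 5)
Your proposal is correct and follows essentially the same route as the paper: identify $\HS_k(I(H^c))$ with the restriction $\left(\HS_k(I(G^c))\right)^{\leq \prod_{z\in V(H)}z}$ and apply Lemma~\ref{lem:linear-quotients-restriction}. The only difference is that the paper cites \cite[Lemma~4.4]{HHZ04Dirac} for this restriction identity, whereas you verify it directly from Lemma~\ref{lem:gens-HS} and the compatibility of perfect elimination orderings with induced subgraphs; your verification is sound.
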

\begin{proof}
    Since $G$ is a chordal graph, so is $H$. By Fr\"oberg theorem \cite{Froberg}, $\HS_k(I(G^c))$ and $\HS_k(I(H^c))$ are generated in degree $k+2$. By \cite[Lemma~4.4]{HHZ04Dirac}, we have 
    \[
    \HS_k(I(H^c)) = \left( \HS_k(I(G^c)) \right)^{\leq \prod_{x\in V(H)}x}.
    \]
    Thus the result follows from Lemma~\ref{lem:linear-quotients-restriction}, as desired.
\end{proof}

Another application of Lemma~\ref{lem:linear-quotients-induced} is that if one can find a monomial $m$ such that $I^{\leq m}$ does not have linear quotients, then neither does $I$. This will be our main tool in proving certain ideals do not have linear quotients, by reducing the problem to ideals that we know do not. The following is one such example.

\begin{lemma}\label{lem:no-LQ-example}
    Let $a,b,c,d\in [n]$ be four distinct integers. Then the ideal $\big( \frac{\prod_{i=1}^n x_i}{x_ax_b}, \ \frac{\prod_{i=1}^n x_i}{x_cx_d} \big)$ does not have linear resolution, and hence also does not have linear quotients.
\end{lemma}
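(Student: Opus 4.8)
The plan is to write down the minimal free resolution of this ideal explicitly: it has only three terms, and the last one sits in too high a degree for the resolution to be linear. Write $u = \prod_{i=1}^n x_i$ and set $f = u/(x_ax_b)$ and $g = u/(x_cx_d)$, so that $I = (f,g)$ is generated by two monomials of degree $n-2$.

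First I would compute $\operatorname{lcm}(f,g)$. Since $a,b,c,d$ are distinct, $x_a$ and $x_b$ appear with exponent $1$ in $g$, the variables $x_c$ and $x_d$ appear with exponent $1$ in $f$, and every other variable appears with exponent $1$ in both $f$ and $g$; hence $\operatorname{lcm}(f,g) = u$, a monomial of degree $n$. Consequently $\operatorname{lcm}(f,g)/f = x_ax_b$ and $\operatorname{lcm}(f,g)/g = x_cx_d$, and since these two monomials are coprime, the only minimal first syzygy of the pair $(f,g)$ is $x_ax_b\, e_1 - x_cx_d\, e_2$. This produces the free resolution
\[
0 \longrightarrow S(-n) \longrightarrow S(-(n-2))^{2} \longrightarrow I \longrightarrow 0,
\]
which is minimal because both entries $x_ax_b$ and $x_cx_d$ of the last differential are nonunits (equivalently, neither of $f,g$ divides the other, as they are distinct monomials of the same degree).

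Now I would conclude. The ideal $I$ is generated in degree $n-2$, so if it had a linear resolution we would be forced to have $F_1 = S(-(n-1))^{b_1}$; but the resolution above has $F_1 = S(-n)$, and $n \neq n-1$. Hence $I$ does not have a linear resolution. Since $I$ is generated in a single degree, Theorem~\ref{thm:LQ-implies-LR} then shows that $I$ cannot have linear quotients either, which gives the claim.

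There is essentially no serious obstacle here; the one point that needs care is the computation of $\operatorname{lcm}(f,g)$, and this is precisely where the hypothesis that $a,b,c,d$ are distinct is used — if two of these indices coincided, the lcm would drop to degree $n-1$ and the resolution would in fact become linear. (Alternatively, one could bypass the structure theory of two-generated monomial ideals and instead read off $\beta_{1,j}(I)$ directly from the Taylor complex on $\{f,g\}$, arriving at the same conclusion.)
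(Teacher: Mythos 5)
Your proof is correct and follows essentially the same route as the paper: both write down the explicit two-term minimal free resolution of the two-generated monomial ideal, observe that the single syzygy $x_ax_b\,e_1 - x_cx_d\,e_2$ is quadratic rather than linear, and then invoke Theorem~\ref{thm:LQ-implies-LR} to rule out linear quotients. Your version merely spells out the lcm computation and the degree bookkeeping in more detail than the paper does.
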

\begin{proof}
    Set $I\coloneqq (m_1,m_2)$, where $m_1\coloneqq \frac{\prod_{i=1}^n x_i}{x_ax_b}$ and $m_2\coloneqq \frac{\prod_{i=1}^n x_i}{x_cx_d}$. A minimal free resolution of $I$ is
    \[
    S\xrightarrow{\begin{pmatrix}
        -x_a x_b\\
        x_c x_d
    \end{pmatrix}} Se_{m_1} \oplus Se_{m_2},
    \]
    where $S=\Bbbk[x_1,\dots, x_n]$. By definition, $I$ does not have linear resolution, and hence also does not have linear quotients by Theorem~\ref{thm:LQ-implies-LR}.  
\end{proof}

To conclude the section, we present our main method in proving that a monomial ideal $I$ has linear quotients with respect to the lex ordering. For a monomial $m=x_{i_1}\cdots x_{i_k}$, let $\supp(m)\coloneqq \{i_1,\dots, i_k\}$ denote the \emph{support} of $m$.

\begin{lemma}\label{lem:LQ-lex}
    Let $x_1>x_2>\cdots >x_{n}$ be an ordering on the variables, and let $(>_{\lex})$ denote the total ordering on monomials using the corresponding lex ordering. Let $I$ be a monomial ideal generated in the same degree. Then $I$ has linear quotients with respect to $(>_{\lex})$ if and only if for any $f,g\in \mingens(I)$ such that $g>_{\lex} f$, there exist integers $i,j$ with $j<i$, $i\in \supp(f)$, and $j\in \supp(g)\setminus \supp (f)$ such that $x_j\frac{f}{x_i} \in \mingens(I)$.
\end{lemma}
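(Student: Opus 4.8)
The plan is to characterize linear quotients with respect to the lex order by unwinding the definition and translating the colon-ideal condition into the combinatorial statement about supports. Recall that $I$ has linear quotients with respect to $(>_{\lex})$ means: writing $\mingens(I) = \{m_1 >_{\lex} \cdots >_{\lex} m_q\}$, each colon ideal $(m_1,\dots,m_i) : m_{i+1}$ is generated by variables. Since all generators have the same degree $d$, for any $f,g \in \mingens(I)$ the colon $(g) : f$ is generated by the single monomial $f / \gcd(f,g)$, whose degree equals $d - \deg\gcd(f,g)$. So I would first record the standard reduction: $(m_1,\dots,m_i) : m_{i+1}$ is generated by variables if and only if for every $g \in \{m_1,\dots,m_i\}$ there is a variable $x_j \in (g) : m_{i+1}$ — equivalently some $x_j$ dividing $m_{i+1}/\gcd(g,m_{i+1})$ — that lies in $(m_1,\dots,m_i) : m_{i+1}$, i.e. $x_j m_{i+1} / x_{?}$ is divisible by some $m_\ell$ with $\ell \le i$.

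\textbf{Forward direction.} Suppose $I$ has linear quotients with respect to $(>_{\lex})$, and take $f, g \in \mingens(I)$ with $g >_{\lex} f$; say $f = m_{i+1}$ in the ordering, so $g \in (m_1,\dots,m_i)$. By the reduction above, since $(g) : f$ is generated by $f/\gcd(f,g)$ and this generator lies in the variable-generated ideal $(m_1,\dots,m_i) : f$, there is a variable $x_j$ dividing $f/\gcd(f,g)$ with $x_j \in (m_1,\dots,m_i) : f$; thus $x_j f \in (m_1,\dots,m_i)$, so some $m_\ell$ (with $\ell \le i$, hence $m_\ell >_{\lex} f$) divides $x_j f$. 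Since $x_j \nmid \gcd(f,g)$ we have $j \in \supp(f)$ (as $x_j \mid f/\gcd(f,g)$ forces $x_j \mid f$)—wait, more carefully: $x_j \mid f/\gcd(f,g)$ gives $j \in \supp(f) \setminus \supp(g)$ only if the relevant exponent in $g$ is $0$; but in the squarefree-like bookkeeping here all generators are squarefree (they are products of distinct $z$'s by Lemma~\ref{lem:gens-HS}), so $x_j \mid f/\gcd(f,g)$ is exactly $j \in \supp(f)\setminus\supp(g)$. Then $m_\ell \mid x_j f$ with $\deg m_\ell = \deg f$ forces $m_\ell = x_j f / x_i$ for some $i \in \supp(f)$ with $i \notin \supp(m_\ell)$; since $m_\ell >_{\lex} f$ and $m_\ell = f$ with $x_i$ replaced by $x_j$, we must have $j < i$. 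Finally I claim $j \in \supp(g) \setminus \supp(f)$... no — I have $j \in \supp(f)$. I need to recheck the intended reading of the statement: re-examining, the roles are that $j \in \supp(g) \setminus \supp(f)$ is what guarantees $x_j (f/x_i)$ is a \emph{new} monomial, and the lex comparison $g >_{\lex} f$ supplies such a $j$. So I would instead start from the variable in $(g):f$: the generator $f/\gcd(f,g)$ is a monomial of positive degree, and I want a variable from $g/\gcd(f,g)$ — this requires noting $(g):f = (f/\gcd(f,g))$ but the membership $x_j \in (m_1,\dots,m_i):f$ with $x_j m_\ell$-divisibility is cleanest phrased via $j \in \supp(g)\setminus \supp(f)$ using that $g$ and $f$ have equal degree, so $\supp(g)\setminus\supp(f)$ and $\supp(f)\setminus\supp(g)$ have equal cardinality and are both nonempty when $f \ne g$.

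\textbf{Reverse direction.} Conversely, assume the support condition holds for all lex-comparable pairs. Fix $i$ and $g \in \{m_1,\dots,m_i\}$; I must show $(g):m_{i+1}$ is absorbed into variable generators of $(m_1,\dots,m_i):m_{i+1}$. Since $g >_{\lex} m_{i+1}$, the hypothesis applied to the pair $(g, m_{i+1})$ (playing the roles of $g, f$) produces $j < i'$ with $i' \in \supp(m_{i+1})$, $j \in \supp(g)\setminus\supp(m_{i+1})$, and $m' := x_j m_{i+1}/x_{i'} \in \mingens(I)$. Then $m'$ divides $x_j m_{i+1}$, so $x_j \in (m') : m_{i+1}$; moreover $m' >_{\lex} m_{i+1}$ since $m'$ agrees with $m_{i+1}$ up to replacing $x_{i'}$ by the lex-larger $x_j$, hence $m' = m_\ell$ for some $\ell \le i$, giving $x_j \in (m_1,\dots,m_i):m_{i+1}$. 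Finally $x_j$ divides $g/\gcd(g,m_{i+1})$ (as $j \in \supp(g)\setminus\supp(m_{i+1})$), which is the generator of $(g):m_{i+1}$, so $(g):m_{i+1} \subseteq (m_1,\dots,m_i):m_{i+1}$ is witnessed by the variable $x_j$; ranging over all $g$ shows the colon is generated by variables.

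\textbf{Main obstacle.} The only genuinely delicate point is the bookkeeping in matching up which support ($\supp(f)\setminus\supp(g)$ versus $\supp(g)\setminus\supp(f)$) the extracted variable and the exchanged index live in, and ensuring the lex inequality $j < i$ comes out with the correct orientation given the convention $x_1 > \cdots > x_n$. I expect that writing everything in terms of the equal-degree (hence equal-support-size) structure and the fact that $m' = x_j f/x_i$ is lex-larger than $f$ exactly when $x_j >_{\lex} x_i$, i.e. $j < i$, resolves this cleanly; all other steps are the standard colon-ideal manipulations recalled above.
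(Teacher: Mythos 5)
Your overall strategy (unwind the definition of linear quotients and translate the colon-ideal condition into the exchange condition on supports) is exactly the paper's, and your reverse direction is correct and complete. The forward direction, however, has a genuine error that you notice mid-proof but never actually repair: you assert that $(g):f$ is generated by $f/\gcd(f,g)$, when in fact $(g):f = \bigl(g/\gcd(f,g)\bigr)$. This sign error in the colon formula is precisely why you land on a variable $x_j$ with $j\in\supp(f)\setminus\supp(g)$ instead of $j\in\supp(g)\setminus\supp(f)$; your attempted recovery (``So I would instead start from the variable in $(g):f$\dots'') still quotes the wrong formula and then trails off into an unexecuted plan, so as written the forward implication is not proved.

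The fix is short and is what the paper does implicitly. The colon ideal $(m_1,\dots,m_i):f$ is generated by the monomials $m_\ell/\gcd(m_\ell,f)$ for $\ell\le i$; it is generated by variables if and only if, for each such $g=m_\ell$, the generator $g/\gcd(g,f)$ is divisible by some variable $x_j$ lying in the colon ideal. Then $x_j \mid g/\gcd(g,f)$ gives $j\in\supp(g)\setminus\supp(f)$ directly (in the squarefree setting, which you correctly flag is the relevant one), and $x_j f\in(m_1,\dots,m_i)$ together with equality of degrees forces $x_j f/x_i=m_{\ell'}$ for some $i\in\supp(f)$ and some $\ell'\le i$, whence $x_jf/x_i>_{\lex}f$, i.e., $j<i$. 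With this correction your forward direction closes and the whole proof coincides with the paper's two-line argument: the witness $f'$ in the definition of linear quotients must be of the form $x_jf/x_i$ with $i\in\supp(f)$, $j\in\supp(g)\setminus\supp(f)$, $j<i$, and conversely any such monomial works if and only if it lies in $\mingens(I)$.
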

\begin{proof}
    By definition, $I$ has linear quotients with respect to $(>_{\lex})$ if and only if for all monomials $f,g\in \mingens(I)$ such that $g>_{\lex} f$, there exists a monomial $f'$ such that
    \begin{equation}\label{condition-LQ}
        f'\in \mingens(I),\;\; 
        f'>_{\lex} f,\;\;
        (f'\colon f) = (x),\;\;
        \text{ and } 
        x\mid g.
    \end{equation}
    Such an $f'$, if it exists, must be of the form $x_j\frac{f}{x_i}$ where $i\in \supp(f)$, $j\in \supp(g)\setminus \supp (f)$, and $j<i$. 
    Conversely, a monomial $f'$ of the form $x_j\frac{f}{x_i}$ with
    $i\in \supp(f)$, $j\in \supp(g)\setminus \supp (f)$, and $j<i$,
    satisfies (\ref{condition-LQ}) if and only if $f'\in \mingens(I)$. This concludes the proof.
\end{proof}

\section{\texorpdfstring{$\HH_n^c$}{HH} and \texorpdfstring{$\LH_{n,r}^c$}{LH}}\label{sec:H-and-LH}

For each $n\geq 6$, let $\HH_n$ be the graph with the vertex set 
\[
V(\HH_n)=\{1,\ldots , n\}
\]
and the edge set 
\begin{align*}
    E(\HH_n) =\, & 
    \{x_ix_{i+1} \colon i\in [n-5]\}
    \cup \{x_{n-4}x_{n-2}, x_{n-2}x_{n-3}\} \\
    &\cup \{x_{n-1}x_j \colon j\in [n]\setminus \{1, n-1\}\} 
    \cup  \{x_nx_j \colon j\in [n]\setminus \{n-3, n\}\}.
\end{align*}

Next, we build up on $\HH_n$ to construct a different family of graphs. For each $n\geq 6$  and $r\geq 0$, let $\LH_{n,r}$ be the graph with the vertex set
\[
V(\LH_{n,r})=V(\HH_n)\cup \{x_{n+1}, \ldots , x_{n+r}\}
\]
and the edge set 
\[
E(\LH_{n,r})=
E(\HH_n) \cup 
\{x_{n+i}x_j\colon i\in [r], j\in [n]\setminus \{n-3\}\}
\cup 
\{x_{n+i}x_{n+j}\colon i,j\in [r]\}.
\]
Pictorially, $\LH_{n,r}^c$ is $\HH_{n}^c$ with $r$ new vertices incident to $x_{n-3}$. 
A vertex is called a \emph{leaf vertex} if it has exactly one neighbor. 
This explains our choice of notation for $\LH_{n,r}^c$, which we call $\HH_n^c$ with leaves. For convenience, we record below the set of edges of $\LH_{n,r}^c$
\[
E(\LH_{n,r}^c)=E(\HH_n^c)\cup \{x_{n-3}x_{n+j}:j\in [r]\}.
\]
We illustrate the first two graphs of these two families, with blue edges representing the ``new" edges needed to construct $\LH_{n,1}$ from $\HH_n$.

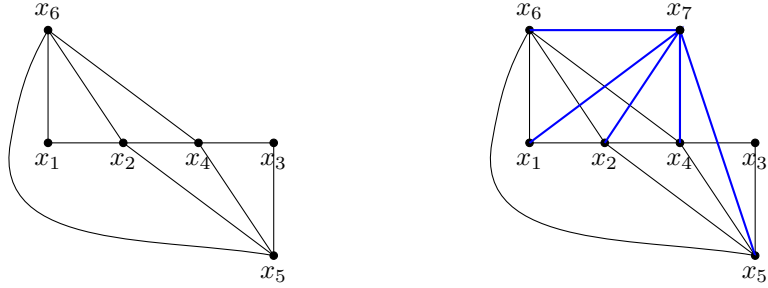
\begin{figure}[h!]
    \centering
        \begin{tikzpicture}[scale=0.5]
            
            \draw [fill] (0,0) circle [radius=0.1];
            \draw [fill] (2,0) circle [radius=0.1];
            \draw [fill] (4,0) circle [radius=0.1];
            \draw [fill] (6,0) circle [radius=0.1];
            \draw [fill] (6,-3) circle [radius=0.1];
            \draw [fill] (0,3) circle [radius=0.1];
            \node at (0,-0.5) {$x_1$};
            \node at (2,-0.5) {$x_2$};
            \node at (4,-0.5) {$x_4$};
            \node at (6,-0.5) {$x_3$};
            \node at (6,-3.5) {$x_5$};
            \node at (0,3.5) {$x_6$};
                        
            \draw (0,0)--(2,0)--(4,0)--(6,0);
            \draw (2,0)--(6,-3)--(4,0);
            \draw (6,0)--(6,-3);
            \draw (0,0)--(0,3)--(2,0);
            \draw (4,0)--(0,3);

             \draw (0, 3) to[out=240, in=80]  (-1, 0) to[out=-100, in=170]  (6, -3);
    \end{tikzpicture}
        \hspace{6em}
    \begin{tikzpicture}[scale=0.5]
            \draw [fill] (0,0) circle [radius=0.1];
            \draw [fill] (2,0) circle [radius=0.1];
            \draw [fill] (4,0) circle [radius=0.1];
            \draw [fill] (6,0) circle [radius=0.1];
            \draw [fill] (6,-3) circle [radius=0.1];
            \draw [fill] (0,3) circle [radius=0.1];

            \draw [fill, black] (4,3) circle [radius=0.1];
            
            \node at (0,-0.5) {$x_1$};
            \node at (2,-0.5) {$x_2$};
            \node at (4,-0.5) {$x_4$};
            \node at (6,-0.5) {$x_3$};
            \node at (6,-3.5) {$x_5$};
            \node at (0,3.5) {$x_6$};
            \node at (4,3.5) {$x_7$};
                        
            \draw (0,0)--(2,0)--(4,0)--(6,0);
            \draw (2,0)--(6,-3)--(4,0);
            \draw (6,0)--(6,-3);
            \draw (0,0)--(0,3)--(2,0);
            \draw (4,0)--(0,3);

            \draw [thick, blue](0,0)--(4,3)--(2,0);
            \draw [thick, blue](4,0)--(4,3);
            \draw [thick, blue](0,3)--(4,3)--(6,-3);
            
            \draw [fill, black] (4,3) circle [radius=0.1];
            
            \draw (0, 3) to[out=240, in=80]  (-1, 0) to[out=-100, in=170]  (6, -3);
    \end{tikzpicture}
        \caption{$\HH_{6}$ and $\LH_{6,1}$}
        \label{fig:H6-LH61}
\end{figure}

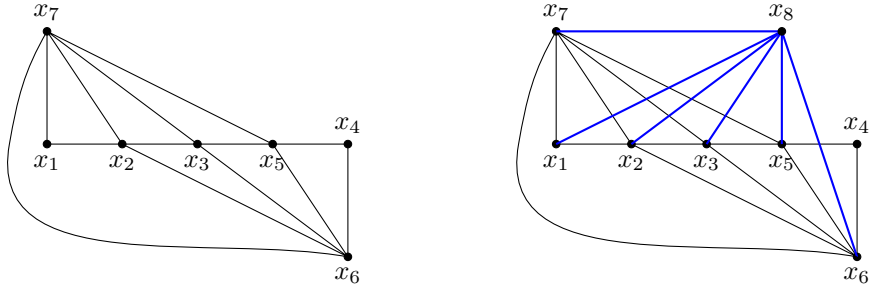
\begin{figure}[h!]
    \centering
        \begin{tikzpicture}[scale=0.5]
            
            \draw [fill] (0,0) circle [radius=0.1];
            \draw [fill] (2,0) circle [radius=0.1];
            \draw [fill] (4,0) circle [radius=0.1];
            \draw [fill] (6,0) circle [radius=0.1];
            \draw [fill] (8,0) circle [radius=0.1];
            \draw [fill] (8,-3) circle [radius=0.1];
            \draw [fill] (0,3) circle [radius=0.1];
            \node at (0,-0.5) {$x_1$};
            \node at (2,-0.5) {$x_2$};
            \node at (4,-0.5) {$x_3$};
            \node at (6,-0.5) {$x_5$};
            \node at (8,0.5) {$x_4$};
            \node at (8,-3.5) {$x_6$};
            \node at (0,3.5) {$x_7$};
                        
            \draw (0,0)--(2,0)--(4,0)--(6,0)--(8,0);
            \draw (2,0)--(8,-3)--(4,0);
            \draw (6,0)--(8,-3)--(8,0);
            \draw (0,0)--(0,3)--(2,0);
            \draw (4,0)--(0,3)--(6,0);

            \draw (0, 3) to[out=240, in=80]  (-1, 0) to[out=-100, in=170]  (8, -3);
    \end{tikzpicture}
        \hspace{4em}
    \begin{tikzpicture}[scale=0.5]
            \draw [fill] (0,0) circle [radius=0.1];
            \draw [fill] (2,0) circle [radius=0.1];
            \draw [fill] (4,0) circle [radius=0.1];
            \draw [fill] (6,0) circle [radius=0.1];
            \draw [fill] (8,0) circle [radius=0.1];
            \draw [fill] (8,-3) circle [radius=0.1];
            \draw [fill] (0,3) circle [radius=0.1];

            \node at (0,-0.5) {$x_1$};
            \node at (2,-0.5) {$x_2$};
            \node at (4,-0.5) {$x_3$};
            \node at (6,-0.5) {$x_5$};
            \node at (8,0.5) {$x_4$};
            \node at (8,-3.5) {$x_6$};
            \node at (0,3.5) {$x_7$};
            \node at (6,3.5) {$x_8$};
                        
            \draw (0,0)--(2,0)--(4,0)--(6,0)--(8,0);
            \draw (2,0)--(8,-3)--(4,0);
            \draw (6,0)--(8,-3)--(8,0);
            \draw (0,0)--(0,3)--(2,0);
            \draw (4,0)--(0,3)--(6,0);

            \draw [thick, blue](0,0)--(6,3)--(2,0);
            \draw [thick, blue](4,0)--(6,3)--(6,0);
            \draw [thick, blue](0,3)--(4,3)--(6,3)--(8,-3);

            \draw [fill, black] (6,3) circle [radius=0.1];
             
            \draw (0, 3) to[out=240, in=80]  (-1, 0) to[out=-100, in=170]  (8, -3);
    \end{tikzpicture}
        \caption{$\HH_{7}$ and $\LH_{7,1}$}
        \label{fig:H7-LH71}
\end{figure}

The goal of this section is to prove the following theorem.
 
\begin{theorem}\label{thm:HLQ-LH}
    For each $n\geq 6$ and $r\geq 1$, we have 
    \begin{enumerate}[label=(\roman*)]
        \item $\HS_k(\LH_{n,r}^c)\neq 0$ does not have linear quotients if $2\leq k< n-4$;
        \item $\HS_k(\LH_{n,r}^c)\neq 0$ has linear quotients if $n-4\leq k \leq n+r-4$;
        \item $\HS_k(\LH_{n,r}^c)=0$ if $n+r-4<k$.
    \end{enumerate}
\end{theorem}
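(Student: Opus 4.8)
The plan is to work throughout with the explicit generators from Lemma~\ref{lem:gens-HS}, applied to the chordal graph $\LH_{n,r}$ with a suitable perfect elimination ordering. First I would fix such an ordering: reading off Figure~\ref{fig:H6-LH61} and Figure~\ref{fig:H7-LH71}, the vertices $x_{n-1}, x_n$ are the "cone" vertices of $\HH_n^c$, the path $x_1,\dots,x_{n-5}$ with the extra edges at the tail behaves like a short chordal "ladder", and the $r$ leaves $x_{n+1},\dots,x_{n+r}$ of $\LH_{n,r}^c$ are attached to $x_{n-3}$. One checks that ordering the $r$ leaves first (largest), then $x_{n-3}$, then the remaining vertices of $\HH_n$ in an order that is a perfect elimination ordering of $\HH_n$, gives a perfect elimination ordering of $\LH_{n,r}$; this is a routine but necessary bookkeeping step. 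Part (iii) is then immediate: $\HS_k(I(G^c)) = 0$ exactly when $k > \pd I(G^c)$, and $\pd I(\LH_{n,r}^c)$ equals $n+r-2$ minus the height-type correction forced by Fröberg/Terai, which I would compute directly as $|V(\LH_{n,r})| - \text{(largest clique count)}$; alternatively, one reads $\HS_{n+r-4}(\LH_{n,r}^c) \ne 0$ off Lemma~\ref{lem:gens-HS} by exhibiting one squarefree monomial of degree $n+r-2$ satisfying the condition there, and observes there is no room for degree $n+r-1$.

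For part (i), the strategy is the reduction technique advertised right before Lemma~\ref{lem:no-LQ-example}: I would produce, for each $k$ with $2 \le k < n-4$, a squarefree monomial $m$ of degree $k+3$ on the vertex set such that $\left(\HS_k(\LH_{n,r}^c)\right)^{\le m}$ is (up to relabelling) the two-generator ideal $\bigl(\tfrac{\prod_{i}x_i}{x_a x_b},\ \tfrac{\prod_i x_i}{x_c x_d}\bigr)$ of Lemma~\ref{lem:no-LQ-example} in the polynomial ring on $\supp(m)$; then Lemma~\ref{lem:linear-quotients-restriction} (via Lemma~\ref{lem:linear-quotients-induced}) forces $\HS_k(\LH_{n,r}^c)$ itself to fail linear quotients. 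Concretely I expect $m$ to be built from a length-$(k+2)$ "sub-ladder" of the path part of $\HH_n$ together with the two cone vertices $x_{n-1}, x_n$, chosen so that exactly two generators of $\HS_k$ divide $m$, and those two generators omit two disjoint pairs of variables of $m$. Showing that these are the \emph{only} generators dividing $m$ — i.e. that no third degree-$(k+2)$ subset of $\supp(m)$ satisfies the "exists $t < k+2$ with $z_{i_t}z_{i_l}\notin E$ for all $l>t$" clause — is where the combinatorics of $\HH_n$ genuinely enters, and this is the step I expect to be the main obstacle; it amounts to understanding precisely which subsets of a clique-dense neighborhood fail the clique condition, and the special adjacency $x_{n-4}x_{n-2}, x_{n-2}x_{n-3}$ at the tail is presumably what makes the bound $k < n-4$ sharp.

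For part (ii), I would show $\HS_k(\LH_{n,r}^c)$ has linear quotients with respect to the lex order from the perfect elimination ordering, using Lemma~\ref{lem:LQ-lex}: given $g >_{\lex} f$ in $\mingens(\HS_k)$, I must find $i \in \supp(f)$ and $j \in \supp(g)\setminus\supp(f)$ with $j < i$ and $x_j \tfrac{f}{x_i} \in \mingens(\HS_k)$. The point of the hypothesis $k \ge n-4$ is that the generators of $\HS_k$ in this range are forced to contain "most" of $V(\HH_n)$ — in particular essentially all of the problematic ladder part and both cone vertices — so the defect set of each generator relative to $m$ is small and controlled by the $r$ leaves, which attach cleanly to $x_{n-3}$; this should let the exchange $x_i \rightsquigarrow x_j$ always be performed against the leaf/cone structure. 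I would organize this as: (a) describe $\mingens(\HS_k(\LH_{n,r}^c))$ explicitly for $n-4 \le k \le n+r-4$ from Lemma~\ref{lem:gens-HS}, observing every generator contains $\{x_{n-1},x_n\}$ and a prescribed core subset of $\HH_n$'s vertices; (b) given $g >_{\lex} f$, locate the largest-index variable of $g$ not in $f$ as the candidate $j$; (c) verify $x_j f / x_i$ still satisfies the clique-defect condition of Lemma~\ref{lem:gens-HS} for an appropriate $i$, splitting into the cases according to whether the swapped variables are leaves, cone vertices, or path vertices. The induction on $r$ (base case $\HH_n^c$ itself, i.e. $k = n-4$, $r = 0$ morally, then adding one leaf at a time) is a plausible way to keep case (c) manageable, reusing Lemma~\ref{lem:linear-quotients-induced} to restrict from $\LH_{n,r+1}$ to $\LH_{n,r}$ where possible.
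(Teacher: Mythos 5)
Your high-level plan coincides with the paper's: describe $\mingens(\HS_k(\LH_{n,r}^c))$ via Lemma~\ref{lem:gens-HS}, disprove linear quotients by restricting to a suitable monomial and invoking Lemma~\ref{lem:no-LQ-example}, and prove linear quotients via the lex criterion of Lemma~\ref{lem:LQ-lex}. However, three of your concrete choices fail. First, your elimination ordering is not a perfect elimination ordering: in $\LH_{n,r}$ each added vertex $x_{n+i}$ (a leaf of $\LH_{n,r}^c$) is adjacent to \emph{every} other vertex except $x_{n-3}$, so its neighborhood contains the non-adjacent pair $x_1,x_{n-2}$ and is not a clique; the leaves cannot be eliminated first. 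The paper's ordering is the natural one $x_1>\cdots>x_{n+r}$, with the leaves eliminated \emph{last} (Lemma~\ref{lem:elm-order-H-and-LH}), and since both the generator description and your lex order are built on the ordering, this error propagates everywhere. Second, for part (i) a witness $m$ of degree $k+3$ can never work: every minimal generator of $\left(\HS_k(\LH_{n,r}^c)\right)^{\le m}$ would then be $m/x_a$ for a single variable $x_a$, and any such ideal automatically has linear quotients (all colon ideals are principal and generated by a variable); you need $\deg m=k+4$ so that generators omit pairs. Worse, your proposed $m$ (a sub-path together with $x_{n-1},x_n$) is supported inside $V(\HH_n)$, but $\HS_k(\HH_n^c)$ \emph{has} linear quotients for $2\le k<n-4$ (Theorem~\ref{thm:HLQ-H}), so by Lemma~\ref{lem:linear-quotients-restriction} no restriction avoiding the leaves can detect the failure. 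The leaf is the whole point: the paper takes $m=x_1(x_3\cdots x_k)(x_{n-3}x_{n-2}\cdots x_{n+1})$, whose only two generators $m/(x_nx_{n+1})$ and $m/(x_{n-2}x_{n-1})$ omit disjoint pairs.

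Third, for part (ii) your structural claim in step (a) is backwards. For $k>n-4$ the generators do not all contain $\{x_{n-1},x_n\}$; rather, the large support forces every generator to contain $x_{n-3}$, to \emph{omit} $x_{n-2}$ and $x_{n-1}$, and to have its two largest support elements at least $n$ (the paper's ``type VI''). The exchange $x_{i_s}\rightsquigarrow x_{j_s}$ visibly preserves this class, which is essentially the entire content of Proposition~\ref{prop:LH-LQ}; an argument premised on $x_{n-1}$ lying in every generator cannot be repaired locally. Finally, a caution on the boundary case: the paper's Propositions~\ref{prop:LH-LQ} and~\ref{prop:LH-no-LQ} establish failure of linear quotients for all $2\le k\le n-4$ and success only for $n-4<k\le n+r-4$, so $k=n-4$ lands on the opposite side from where the theorem statement places it; any careful write-up must confront this boundary explicitly rather than inherit it from the statement.
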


We note that this theorem holds assuming that $r\neq 0$. 
In other words, the above theorem does not yield when $\HS_k(\LH_{n,0}^c)=\HS_k(\HH_{n}^c)$ has linear quotients. 
Despite the similarity between $\HH_n^c$ and $\LH_{n,r}^c$, the information on whether their homological shift ideals have linear quotients are almost polar opposite. We refer to Theorem~\ref{thm:HLQ-H} for a comparison. On the other hand, because of the similarity in structures, we will study some properties of $\HH_n$ in this section, as they will be essential in our study of graphs derived from them.

As the description of homological shift ideals of co-chordal graphs uses perfect elimination ordering of the complement graph, we first show that the natural order of the vertices of $\HH_{n}$ and $\LH_{n,r}$ gives perfect elimination ordering of the corresponding graphs. Since $\HH_{n}$ is an induced subgraph of $\LH_{n,r}$ on the vertex set $[n]$, it is enough to provide a perfect elimination ordering for $\LH_{n,r}$.

\begin{lemma}\label{lem:elm-order-H-and-LH}
    For each $n\geq 6$ and $r\geq 0$, the ordering $x_1>x_2>\cdots >x_{n+r}$ is a perfect elimination ordering for $\LH_{n,r}$. 
    In particular, $\LH_{n,r}$ is a chordal graph. 
    Consequently, for each $n\geq 6$, $\HH_{n}$ is a chordal graph, where $x_1>\cdots >x_n$ gives a perfect elimination ordering for $\HH_n$.
\end{lemma}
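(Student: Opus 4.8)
The plan is to verify the definition of a perfect elimination ordering directly. Fix $n \geq 6$ and $r \geq 0$; for $i \in [n+r]$ write $G_i$ for the induced subgraph of $\LH_{n,r}$ on the vertex set $\{x_i, x_{i+1}, \dots, x_{n+r}\}$, so that what must be shown is that $N_{G_i}(x_i)$ spans a complete subgraph of $G_i$ for every $i$. Since $\LH_{n,0} = \HH_n$, establishing this for all $r \geq 0$ at once yields both assertions of the lemma, so it suffices to argue for $\LH_{n,r}$ with an arbitrary $r \geq 0$.

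The whole argument rests on a single observation, read directly off the definitions of the edge sets: the upper vertices $x_{n-1}, x_n, x_{n+1}, \dots, x_{n+r}$ are pairwise adjacent in $\LH_{n,r}$, hence span a clique; each of $x_n, x_{n+1}, \dots, x_{n+r}$ is adjacent to every $x_j$ with $j \leq n-2$ \emph{except} $x_{n-3}$; and $x_{n-1}$ is adjacent to every $x_j$ with $2 \leq j \leq n-2$. The hypothesis $n \geq 6$ is exactly what keeps these statements clean: it forces $n-2, n-3, n-4, n-5$ to be distinct positive integers, all larger than $2$, so that for instance $x_2 x_{n-3} \in E(\LH_{n,r})$ and $x_{n-4}$ is a genuine path vertex distinct from $x_{n-2}$ and $x_{n-3}$.

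Granting this, I would run through the vertices in cases, in each case listing the surviving neighbors and checking that $N_{G_i}(x_i)$ sits inside the clique $\{x_{n-1}, x_n, x_{n+1}, \dots, x_{n+r}\}$ together with at most one further path vertex $x_j$ (with $j \leq n-2$) which one confirms is adjacent to every listed upper vertex. The cases are: $i = 1$, where $N_{G_1}(x_1) = \{x_2, x_n, x_{n+1}, \dots, x_{n+r}\}$ because $x_1 x_{n-1} \notin E$; $2 \leq i \leq n-5$, where $N_{G_i}(x_i) = \{x_{i+1}, x_{n-1}, x_n, x_{n+1}, \dots, x_{n+r}\}$ with $i+1 \leq n-4$; $i = n-4$, where $N_{G_{n-4}}(x_{n-4}) = \{x_{n-2}, x_{n-1}, x_n, x_{n+1}, \dots, x_{n+r}\}$; $i = n-3$, where $N_{G_{n-3}}(x_{n-3}) = \{x_{n-2}, x_{n-1}\}$ is a single edge; $i = n-2$, where $N_{G_{n-2}}(x_{n-2}) = \{x_{n-1}, x_n, x_{n+1}, \dots, x_{n+r}\}$ once the neighbors $x_{n-4}, x_{n-3}$ are discarded; $i = n-1$, where $N_{G_{n-1}}(x_{n-1}) = \{x_n, x_{n+1}, \dots, x_{n+r}\}$; $i = n$, where $N_{G_n}(x_n) = \{x_{n+1}, \dots, x_{n+r}\}$; and $i = n+\ell$ with $\ell \in [r]$, where $N_{G_{n+\ell}}(x_{n+\ell}) = \{x_{n+\ell+1}, \dots, x_{n+r}\}$. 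In every instance the displayed set is a clique by the observation of the previous paragraph, which is exactly what the definition demands; by definition this makes $\LH_{n,r}$ chordal, and the final assertion about $\HH_n$ is the case $r = 0$.

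This is a finite verification rather than anything conceptual, so the only real pitfall — the closest thing to an obstacle — is keeping the two exceptional non-edges in view, namely $x_1 x_{n-1} \notin E$ and $x_{n-3} x_n, x_{n-3} x_{n+\ell} \notin E$, which cut the neighborhoods down in the cases $i = 1$ and $i = n-3$ rather than break cliqueness, together with the systematic use of $n \geq 6$ to keep the indices near $n$ from colliding. Once one confirms that the eight cases above exhaust $[n+r]$, each sub-verification is immediate.
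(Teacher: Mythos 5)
Your proof is correct and takes essentially the same route as the paper: compute $N_{G_i}(x_i)$ case by case and check each is a clique inside the upper clique $\{x_{n-1},x_n,\dots,x_{n+r}\}$ plus at most one path vertex. In fact your case split is slightly more careful than the paper's, which folds $i=1$ into the range $1\leq i\leq n-5$ and lists $x_{n-1}$ in $N_{G_1}(x_1)$ even though $x_1x_{n-1}\notin E(\LH_{n,r})$ (harmless there, since the claimed superset is still a clique, but your separate treatment of $i=1$ is the more accurate statement).
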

\begin{proof}
    Let $G_i$ be the induced subgraph of $\LH_{n,r}$ on the vertices $\{x_i,x_{i+1},\ldots ,x_{n+r}\}$. By definition, it suffices to show that the induced subgraph of $G_i$ on $N_{G_i}(x_i)$ is a complete graph. Indeed, for $i\in [n+r]$, we have
    \begin{align*}
        N_{G_i}(x_i)=\begin{cases}
            \{x_{i+1}\} \cup \{x_{n-1},x_{n},\ldots , x_{n+r}\} &\text{ for } 1\leq i\leq n-5,\\
            \{x_{n-2}\} \cup \{x_{n-1},x_{n},\ldots , x_{n+r}\} &\text{ if } i=n-4, \\
            \{x_{n-2}, x_{n-1}\} &\text{ if } i=n-3, \\
            \{x_{i+1},x_{i+2},\ldots , x_{n+r}\} &\text{ for } n-2\leq i\leq n+r.
        \end{cases}
    \end{align*}
    From the structure of the graph $\LH_{n,r}$, it is straightforward to verify that in each of the above cases, the induced subgraph on $N_{G_i}$ forms a complete subgraph of $G_i$. This completes the proof for $\LH_{n,r}$. The assertion for $\HH_n$ then follows from the fact that $\HH_n$ is precisely $\LH_{n,0}$.
\end{proof}

Next, we study the generators of $\HS_k(\HH_{n}^c)$ and $\HS_k(\LH_{n,r}^c)$ for an integer $k$. Recall that for a co-chordal graph $G$, the ideal $\HS_k(G)$ is generated in degree $k+2$. We first introduce some notation. For a monomial $h=x_{i_1}x_{i_2}\cdots x_{i_{k+2}}$ where $1\leq i_1<i_2<\cdots <i_{k+2}\leq n+r$, set
\begin{align*}
    \min(h) &\coloneqq \min \supp(h) = i_{1},\\
    \max(h) &\coloneqq \max \supp(h) = i_{k+2},\\
    \secondmax(h) &\coloneqq \max(h/x_{\max(h)}) = i_{k+1}.
\end{align*}
Thus, $\secondmax(h)$ denotes the second largest number in its support, explaining our notation.

\begin{definition}\label{def:gens-H-and-LH}
    A monomial $h=x_{i_1}x_{i_2}\cdots x_{i_{k+2}}$ is said to be one of the following \emph{types} if  it satisfies the corresponding condition:
    \begin{enumerate}
        \item[(I)] $\max(h) \leq n-4$ and $[\min(h), \max(h)]\setminus \supp(h) \neq \emptyset$;
        \item[(II)] $\max(h)= n-3$;
        \item[(III)] $\max(h)= n-2$ and either of the following holds:
        \begin{itemize}
            \item $\secondmax(h) \in \{n-3,n-4\}$ and  $[\min(h), \secondmax(h)]\setminus \supp(h) \neq \emptyset$;
            \item $\secondmax(h)\leq n-5$;
        \end{itemize}
        \item[(IV)] $\max(h)= n-1$, $1\in \supp(h)$, and $2\notin \supp(h)$;
        \item[(V)] $\max(h) = n$ and $\secondmax(h)=n-3$.
        \item[(VI)] $\max(h) \geq n+1$ and there is some $q\leq k+1$ such that $i_q=n-3$, $i_{q+1}\geq n$. 
    \end{enumerate}
\end{definition}

\begin{proposition}\label{prop:gens-H}
    For any $k\geq 2$, the ideal $\HS_k(\HH_{n}^c) $ is minimally generated by monomials~$h$, where $\left|\supp(h)\right|=k+2$ and $h$ is of type I, II, III, IV, or V\@.
\end{proposition}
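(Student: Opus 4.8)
The plan is to extract the minimal generators directly from Lemma~\ref{lem:gens-HS}. By Lemma~\ref{lem:elm-order-H-and-LH}, the ordering $x_1 > x_2 > \cdots > x_n$ is a perfect elimination ordering of $\HH_n$, so Lemma~\ref{lem:gens-HS} presents $\HS_k(\HH_n^c)$ as generated by the squarefree monomials $h = x_{i_1}\cdots x_{i_{k+2}}$ with $1 \le i_1 < \cdots < i_{k+2} \le n$ that admit what I will call a \emph{witness}: an index $t < k+2$ with $x_{i_t}x_{i_l} \notin E(\HH_n)$ for all $t < l \le k+2$ --- equivalently, a support vertex other than the largest having no $\HH_n$-neighbor above it inside $\supp(h)$. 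Since these are distinct monomials all of degree $k+2$, they automatically form the minimal generating set, so the proposition reduces to the combinatorial equivalence: $h$ admits a witness $\iff$ $h$ is of type I, II, III, IV, or V.

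I would begin by recording $N_{\HH_n}(x_i)$ for each $i$ straight from the edge list and isolating three facts: (a) the subgraph of $\HH_n$ induced on $\{x_1, \dots, x_{n-4}\}$ is the path $x_1 - x_2 - \cdots - x_{n-4}$, so for $i < j \le n-4$ one has $x_ix_j \in E(\HH_n)$ iff $j = i+1$; (b) $x_{n-3}$ has no neighbor among $x_1, \dots, x_{n-4}$; (c) $x_{n-1}$ is adjacent to every vertex but $x_1$, $x_n$ to every vertex but $x_{n-3}$, and $x_{n-2}$ is adjacent, below itself, only to $x_{n-4}$ and $x_{n-3}$. An immediate consequence of (c) is that none of $x_{n-2}, x_{n-1}, x_n$ can ever be a witness.

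The body of the argument is a case split on $M \coloneqq \max(h)$, in which (a)--(c) cut the candidates for a witness down to (essentially) a single vertex, turning ``$h$ admits a witness'' into an explicit condition on $\supp(h)$ to be matched against Definition~\ref{def:gens-H-and-LH}. If $M = n$: the only non-neighbor of $x_n$ is $x_{n-3}$, so a witness exists iff $n-3 \in \supp(h)$ and $n-2, n-1 \notin \supp(h)$, i.e.\ $\secondmax(h) = n-3$ (type V). If $M = n-1$: the only non-neighbor of $x_{n-1}$ is $x_1$, so a witness exists iff $1 \in \supp(h)$ and $2 \notin \supp(h)$ (type IV). If $M = n-3$: then $\supp(h) \subseteq [1, n-3]$, and the second-largest support vertex lies on the path $x_1 - \cdots - x_{n-4}$, so by (b) it is non-adjacent to $x_{n-3}$, the only support vertex above it; hence it is a witness and every such $h$ is a minimal generator (type II). If $M \le n-4$: by (a) a witness is precisely a support vertex immediately followed by a gap, so a witness exists iff $[\min(h), \max(h)] \setminus \supp(h) \ne \emptyset$ (type I).

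The remaining and most delicate case is $M = n-2$, where by (c) any witness lies in $\{x_1, \dots, x_{n-5}\}$. If $\secondmax(h) \le n-5$, then $\secondmax(h)$ itself is such a vertex, with no support vertex strictly between it and $n-2$, hence a witness, so $h$ is a generator --- the second bullet of type III. If $\secondmax(h) \in \{n-4, n-3\}$, the claim is that a witness exists iff $[\min(h), \secondmax(h)] \setminus \supp(h) \ne \emptyset$, the first bullet of type III. Manufacturing a witness from a gap is routine once one checks, using (a) and (b), that the relevant vertex has small enough index to be non-adjacent to $x_{n-2}$ and to whichever of $x_{n-4}, x_{n-3}$ is $\secondmax(h)$. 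The step I expect to require the most care is the converse: a witness need not sit immediately before a gap --- it could be the largest path vertex of $\supp(h)$ lying below $\secondmax(h)$ --- and one must observe that the non-adjacency requirements there (to $x_{n-2}$, and to $x_{n-4}$ when $\secondmax(h) = n-4$) force its index to be at most $\secondmax(h) - 2$, so that $\secondmax(h) - 1$ is itself a gap in $[\min(h), \secondmax(h)]$. Finally I would sanity-check the degenerate case $n = 6$, where $\{x_1, \dots, x_{n-4}\} = \{x_1, x_2\}$ makes type I and the second bullet of type III vacuous; this is harmless since $k \ge 2$ forces $|\supp(h)| \ge 4$.
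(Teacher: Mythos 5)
Your proposal is correct and follows essentially the same route as the paper: both reduce the statement via Lemma~\ref{lem:elm-order-H-and-LH} and Lemma~\ref{lem:gens-HS} to the combinatorial equivalence between admitting a ``witness'' index $t$ and being of type I--V, and both settle it by a case analysis governed by $\max(h)$ using the explicit neighborhoods in $\HH_n$. The only difference is organizational (the paper runs the forward direction type-by-type and the converse by cases on $\max(h)$, while you handle both directions at once within each case), and your treatment of the delicate $\max(h)=n-2$ case is sound.
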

\begin{proof}
    Because of the perfect elimination ordering of $\HH_n$ as given in Lemma~\ref{lem:elm-order-H-and-LH}, we can make use of Lemma~\ref{lem:gens-HS} to conclude our assertion. Thus, it is enough to show that $h=x_{i_1}x_{i_2}\cdots x_{i_{k+2}}$, where $1\leq i_1<i_2<\cdots <i_{k+2}\leq n$,  is a monomial of type (I)--(V)  if and only if there is some $t<k+2$ such that $x_{i_t}x_{i_l}\notin E(\HH_n)$ for all $t< l\leq k+2$. 
    We prove the forward implication by treating each case separately.
    \begin{enumerate}
        \item[(I)] Assume that $\max(h) \leq n-4$ and $[\min(h),\ \max(h)]\setminus \supp(h) \neq \emptyset$. First, we observe that the induced subgraph of $\HH_n$ on the vertices $x_1,x_2, \ldots ,x_{\max(h)}$ is the path graph with edges $\{x_ix_{i+1}\colon 1\leq i\leq \max(h)-1\}$. Since $\min(h)\in \supp(h)$, there exists an index $i_{s}\in [\min(h), \max(h)]\setminus \supp(h)$ such that $i_{s}-1\in \supp(h)$. We now set $i_t=i_{s}-1$. Then $i_{t}+1=i_s\notin \supp(h)$ and hence, $i_{t+1}\geq i_t+2$. Thus, we have $x_{i_t}x_{i_{l}}\notin E(\HH_n)$ for each $t< l\leq k+2$. 

        \item[(II)] Assume that $\max(h)= n-3$. Then $\secondmax(h)\leq n-4$ and note that $x_{\secondmax(h)}x_{\max(h)}\notin E(\HH_n)$. So, if we set $i_t=\secondmax(h)$, then the result follows.

        \item[(III)] Assume that $h$ satisfies one of the conditions given in (III).  First, we observe that the induced subgraph of $\HH_n$ on the vertices $x_1,x_2, \ldots ,x_{\max(h)}=x_{n-2}$ is the path graph with edges $\{x_ix_{i+1}\mid 1\leq i\leq n-5\}\cup\{x_{n-4}x_{x-2}, x_{n-2}x_{x-3}\}$. If $\secondmax(h) \in \{n-3,n-4\}$ and  $[\min(h),\  \secondmax(h)]\setminus \supp(h) \neq \emptyset$. Following similar arguments to those in the first case, we can choose $i_t\in \supp(h)$ such that $i_{t}+1\notin [\min(h),\ \secondmax(h)]\setminus \supp(h)$. Note that $i_t+1\neq n-3$, as otherwise, we would have $i_{t}+1=\secondmax(h)\in \supp(h)$, a contradiction to our choice. Therefore, $i_t+1\leq n-4$ and therefore, $i_t\leq n-5$. Thus, it follows that $N_{\HH_n}(x_{i_t})=\{x_{i_t-1},\ x_{i_t+1}\}$ and since $i_{t+1}\geq i_t+2$, we have $x_{i_t}x_{i_{l}}\notin E(\HH_n)$ for each $t< l\leq k+2$. If $\secondmax(h)\leq n-5$, then we can set $i_t=\secondmax(h)$, as $x_{\secondmax(h)}x_{\max(h)}\notin E(\HH_n)$. The result then follows.

        \item[(IV)] Assume that $\max(h)= n-1$, $1\in \supp(h)$, and $2\notin \supp(h)$. Then, we set $i_t=i_1=1$. Since $N_{\HH_n}(x_{1})=\{x_{2}, x_{n}\}$ and $2,n\notin \supp(h)$, we conclude that $x_{1}x_{i_{l}}\notin E(\HH_n)$ for all $2\leq l\leq k+2$. Thus if we set $i_t=1$, the result then follows.

        \item[(V)] Assume that $\max(h) = n$ and $\secondmax(h)=n-3$. In this case, we set $i_t=\secondmax(h)=n-3$. As $x_{n-3}x_{n}\notin E(\HH_n)$, the result then follows.
    \end{enumerate}

    To prove the converse, let $h$ be a minimal monomial generator of $\HS_k(\HH_n)$. Then by Lemma~\ref{lem:gens-HS}, there exists some $t<k+2$ such that $x_{i_t}x_{i_l}\notin E(\HH_n)$ for all $t< l\leq k+2$. We consider the following cases: 
    \begin{itemize}
        \item[Case 1:] Assume that $\max(h) \leq n-4$. Then $x_{i_t}x_{i_{t+1}}\notin E(\HH_n)$ implies that $i_{t+1}\geq i_t+2$, and hence $i_{t}+1\in [\min(h),\ \max(h)]\setminus \supp(h)$. In particular, this implies that $h$ is of type~I.

        \item[Case 2:] Assume that $\max(h)= n-3$. For this case, it is immediate that $h$ is of type~II.

        \item[Case 3:] Assume that $\max(h)= n-2$. Since $N_{\HH_n}(x_{n-2})=\{n-3,\ n-4\}$, we get $i_t \leq n-5$. Assume that  $\secondmax(h)=n-3$. Then $x_{i_t}x_{i_{t+1}}\notin E(\HH_n)$ implies that $i_{t+1}\neq i_t+1$. Therefore, $i_{t}+1\in [\min(h),\ \secondmax(h)]\setminus \supp(h)$. If $\secondmax(h)=n-4$, it follows that $i_t \leq n-6$. Then again, $i_{t}+1\in [\min(h),\ \secondmax(h)]\setminus \supp(h)$. Finally, excluding the above two cases for $\secondmax(h)$, we must have $\secondmax(h)\leq n-5$. In conclusion, $h$ is of type~III.

        \item[Case 4:] Assume that $\max(h)= n-1$. Note that $N_{\HH_n}(x_{n-1})=\{2,3,\ldots, n-2,n\}$. Thus, $i_t=1=i_1$ and therefore, $i_l\neq 2,n$ for all $2\leq l\leq k+2$, since $N_{\HH_n}(x_{1})=\{2,n\}$. Since $\max(h)= n-1$, this is true exactly when $2\notin \supp(h)$. In other words, $h$ is of type~IV.

        \item[Case 5:] Assume that $\max(h)= n$. Note that $N_{\HH_n}(x_{n})=[n]\setminus \{n-3\}$. Thus, $i_t=n-3$ and since $N_{\HH_n}(x_{n-3})= \{n-1,n-2\}$, it follows that $t=k+1$. Therefore, $\secondmax(h)=n-3$, i.e., $h$ is of type~V. \qedhere
        \end{itemize}
\end{proof}

\begin{proposition}\label{prop:gens-LH}
    For any $k\geq 2$, the ideal $\HS_k(\LH_{n,r}^c)$ is minimally generated by monomials~$h$, where $\left |\supp(h) \right|=k+2$ and $h$ is of type I, II, III, IV, V, or VI\@.
\end{proposition}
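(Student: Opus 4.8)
The plan is to follow the strategy of the proof of Proposition~\ref{prop:gens-H}. By Lemma~\ref{lem:elm-order-H-and-LH} the ordering $x_1 > \cdots > x_{n+r}$ is a perfect elimination ordering for the chordal graph $\LH_{n,r}$, so Lemma~\ref{lem:gens-HS} describes a generating set of $\HS_k(\LH_{n,r}^c)$; moreover Fr\"oberg's theorem guarantees this ideal is generated in degree $k+2$, so any generating set consisting of squarefree monomials of degree $k+2$ is automatically the minimal one. It therefore suffices to prove that a monomial $h = x_{i_1}\cdots x_{i_{k+2}}$ with $1 \le i_1 < \cdots < i_{k+2} \le n+r$ satisfies the condition of Lemma~\ref{lem:gens-HS} relative to $\LH_{n,r}$ if and only if $h$ is of type I, II, III, IV, V, or VI. I would split according to whether $\max(h) \le n$ or $\max(h) \ge n+1$.

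In the case $\max(h) \le n$, I would use that $\HH_n$ is the induced subgraph of $\LH_{n,r}$ on the vertex set $[n]$, so that for any indices $i_t, i_l \le n$ one has $x_{i_t}x_{i_l} \in E(\LH_{n,r})$ if and only if $x_{i_t}x_{i_l} \in E(\HH_n)$. Consequently $h$ satisfies the condition of Lemma~\ref{lem:gens-HS} relative to $\LH_{n,r}$ exactly when it satisfies it relative to $\HH_n$, i.e.\ exactly when $h \in \mingens(\HS_k(\HH_n^c))$, which by Proposition~\ref{prop:gens-H} happens precisely when $h$ is of type I--V. Since each of types I--V forces $\max(h) \le n$ while type VI forces $\max(h) \ge n+1$, this case contributes exactly the type I--V monomials and there is no overlap with type VI.

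The case $\max(h) \ge n+1$ is the only genuinely new computation, and it is where type VI enters. Here I would argue directly from the structure of $\LH_{n,r}$: the vertices $x_{n+1}, \dots, x_{n+r}$ pairwise form a clique, each $x_{n+j}$ is adjacent to every vertex of $[n]$ except $x_{n-3}$, and $N_{\LH_{n,r}}(x_{n-3}) = \{x_{n-2}, x_{n-1}\}$ (exactly as in $\HH_n$, since $x_{n-3}$ is joined to no new vertex). Suppose first that $h$ satisfies the condition of Lemma~\ref{lem:gens-HS}, witnessed by some $t < k+2$. If $i_t \ge n+1$ then $x_{i_t}x_{i_{t+1}}$ would be an edge of the clique on $\{x_{n+1},\dots,x_{n+r}\}$, contradicting the witnessing property; hence $i_t \le n$. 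Now $x_{i_t}x_{i_{k+2}} \notin E(\LH_{n,r})$ with $i_{k+2} \ge n+1$ forces $i_t = n-3$, because every vertex of $[n]$ other than $x_{n-3}$ is adjacent to $x_{i_{k+2}}$. Setting $q = t$, the requirement that $x_{n-3}$ be non-adjacent to $x_{i_l}$ for all $q < l \le k+2$ together with $N_{\LH_{n,r}}(x_{n-3}) = \{x_{n-2}, x_{n-1}\}$ forces $i_{q+1} \notin \{n-2, n-1\}$, i.e.\ $i_{q+1} \ge n$; this is precisely type VI. Conversely, if $i_q = n-3$ with $q \le k+1$ and $i_{q+1} \ge n$, then $i_l \ge n$ for all $l > q$, and since $x_{n-3}$ is adjacent to neither $x_n$ nor any of $x_{n+1}, \dots, x_{n+r}$, the index $t = q$ witnesses the condition of Lemma~\ref{lem:gens-HS}. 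Combining the two cases gives the claimed description of $\mingens(\HS_k(\LH_{n,r}^c))$.

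I expect the main (and only mildly delicate) point to be the bookkeeping in the second case concerning $x_n$ versus the leaf vertices $x_{n+1}, \dots, x_{n+r}$: one must track that $x_{n-3}$ is adjacent to none of them, which is exactly why the correct threshold in type VI is ``$i_{q+1} \ge n$'' rather than ``$i_{q+1} \ge n+1$'', and also why no monomial is simultaneously of type V and of type VI. Apart from this, the argument is a routine adaptation of the case analysis performed for $\HH_n$ in Proposition~\ref{prop:gens-H}.
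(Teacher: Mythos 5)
Your proposal is correct and follows essentially the same route as the paper: reduce the case $\max(h)\leq n$ to Proposition~\ref{prop:gens-H} via the induced subgraph $\HH_n$, and handle $\max(h)\geq n+1$ by locating the witness index at $x_{n-3}$ using $N_{\LH_{n,r}}(x_{n-3})=\{x_{n-2},x_{n-1}\}$ and the non-adjacency of $x_{n-3}$ to $x_n,x_{n+1},\dots,x_{n+r}$. The only cosmetic difference is that you first exclude $i_t\geq n+1$ via the clique on the new vertices before pinning down $i_t=n-3$, whereas the paper reads this off directly from $N_{\LH_{n,r}}(x_{\max(h)})$.
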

\begin{proof}
    Note that the induced subgraph of $\LH_{n,r}$ on the vertex set $\{x_i\colon i\in [n]\}$ is $\HH_n$. Thus, if $\max(h)\leq n$, then it follows from Proposition~\ref{prop:gens-H} that $h\in \HS_k(\LH_{n,r}^c)$ if and only if  $h$ is of type I--V\@. Therefore, we only need to consider the case when $\max(h)>n$. Set $h=x_{i_1}x_{i_2}\cdots x_{i_{k+2}}$, where $1\leq i_1<i_2<\cdots <i_{k+2}\leq n+r$. 
    
    Assume that $\max(h)\geq n+1$, and that $h$ is of type VI\@. As $x_{n-3}x_{n+l}\notin E(\LH_{n,r})$ for all $0\leq l\leq r$, and $\{{i_{q+1}},{i_{q+2}},\ldots ,{\max(h)}\}\subseteq [n,n+r]$, we can set $i_t=i_q$ to obtain ${x_{i_t}x_{i_l}\notin E(\LH_{n,r})}$ for all
    $l \in (t, k+2]$.
    Therefore, $h\in \HS_k(\LH_{n,r}^c)$. Conversely, assume that $h\in \HS_k(\LH_{n,r}^c)$ and that $\max(h)\geq n+1$. By Lemma~\ref{lem:gens-HS}, there exists $t< k+2$ such that $x_{i_t}x_{i_l}\notin E(\LH_{n,r})$ for all $t< l\leq k+2$. Note that $N_{\LH_{n,r}}(x_{\max(h)})=\{x_i\colon i \in [n+r]\setminus \{n-3\}\}$, thus we must have $i_t=n-3$. Since $N_{\LH_{n,r}}(x_{n-3})=\{n-2,n-1\}$, it follows that $i_{t+1}\geq n$. Therefore, $h$ is of type VI, and this completes the proof.
\end{proof}

We are in a position to prove Theorem~\ref{thm:HLQ-LH}. We first determine the nonzero shift ideals of $\LH_{n,r}^c$ for $n \ge 6$ and $r \ge 0$ (in particular, this includes the case $\HH_{n}^{c}$ when $r = 0$).

\begin{proposition}\label{prop:proj-LH}
    For each $n\geq 6$ and $r\geq 0$, we have $\HS_k(\LH_{n,r}^c)\neq 0$ if and only if $k\leq n+r-4$. 
\end{proposition}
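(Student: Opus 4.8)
We prove the proposition by determining the largest $k$ for which $\HS_k(\LH_{n,r}^c)$ admits a minimal generator. Recall that $\HS_k(I)\neq 0$ exactly when the free module $F_k$ in the minimal graded free resolution of $I$ is nonzero, and that in a minimal free resolution the nonzero modules occupy exactly the homological degrees $0,1,\dots,\pd I$. Hence $\{k : \HS_k(\LH_{n,r}^c)\neq 0\}$ is the interval $[0,\pd I(\LH_{n,r}^c)]$, and it suffices to prove (a) $\HS_{n+r-4}(\LH_{n,r}^c)\neq 0$ and (b) $\HS_k(\LH_{n,r}^c)=0$ for every $k\geq n+r-3$. Since $n\geq 6$ forces $n+r-4\geq 2$, every value of $k$ relevant here satisfies $k\geq 2$ (indeed $k\geq n+r-3\geq 3$ in part (b)), so in both parts we may detect (non)vanishing by searching for a squarefree monomial of support size $k+2$ of one of the types I--VI of Definition~\ref{def:gens-H-and-LH}, using Proposition~\ref{prop:gens-LH}.

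For (a), I would exhibit the monomial $h=\prod_{i\in [n+r]\setminus\{n-2,n-1\}}x_i$, which is squarefree of degree $n+r-2=(n+r-4)+2$. If $r\geq 1$, then $\max(h)=n+r\geq n+1$ and the element of $\supp(h)$ immediately after $n-3$ is $n$, so $h$ is of type VI (the required index being $q=n-3\leq n+r-3=k+1$). If $r=0$, then $\max(h)=n$ and $\secondmax(h)=n-3$, so $h$ is of type V. Either way, Proposition~\ref{prop:gens-LH} yields $\HS_{n+r-4}(\LH_{n,r}^c)\neq 0$.

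For (b), fix $k\geq n+r-3$ and let $h$ be a squarefree monomial of degree $k+2$; then $|\supp(h)|\geq n+r-1$, so $\supp(h)$ omits at most one element of $[n+r]$. I would now eliminate each type. A type-VI monomial requires the successor of $n-3$ in $\supp(h)$ to be at least $n$, which forces both $x_{n-2}$ and $x_{n-1}$ to be absent --- two omissions, impossible. A monomial of type I--V has $\max(h)\leq n$; when $r\geq 1$ this forces all of $x_{n+1},\dots,x_{n+r}$ to be absent, which is impossible for $r\geq 2$ and forces $h=x_1\cdots x_n$ when $r=1$, but that monomial has $\secondmax(h)=n-1\neq n-3$ and $\max(h)=n$, hence is of none of the types. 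When $r=0$ the same short case analysis on the (at most one) omitted variable finishes the job: omitting $x_n$ gives $\max(h)=n-1$ with $2\in\supp(h)$, hence not type IV, while any other choice gives $\max(h)=n$ with $\secondmax(h)=n-1\neq n-3$, hence not type V. Thus no type occurs, so $\HS_k(\LH_{n,r}^c)=0$, as needed.

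The substantive part is really the boundary analysis in (b) --- the degrees $k=n+r-3$ and $k=n+r-2$ --- together with the degenerate small parameter cases, especially $r=0$, where type VI is vacuous and one must fall back on type V, and $r=1$, where $\max(h)=n$ first becomes attainable. Everything else is routine bookkeeping with Definition~\ref{def:gens-H-and-LH}, and the ``interval'' observation of the first paragraph is precisely what spares us from having to build an explicit generator of $\HS_k$ in each intermediate homological degree $2\leq k\leq n+r-4$. As a byproduct this computes $\pd I(\LH_{n,r}^c)=n+r-4$, which is how part~(iii) of Theorem~\ref{thm:HLQ-LH} drops out.
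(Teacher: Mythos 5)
Your proof is correct and follows essentially the same route as the paper: you exhibit the same extremal generator $\prod_{i\in[n+r]\setminus\{n-2,n-1\}}x_i$ to get $\HS_{n+r-4}(\LH_{n,r}^c)\neq 0$, and you show that any generator in homological degree at least $n+r-3$ would have to omit both $x_{n-2}$ and $x_{n-1}$, contradicting the support count. (You are in fact slightly more careful than the paper in the $r=0$ case, where this monomial is of type V rather than VI; the only blemish is that when $r=0$ and $x_{n-1}$ is the omitted variable one gets $\secondmax(h)=n-2$ rather than $n-1$, though the needed conclusion $\secondmax(h)\neq n-3$ still holds.)
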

\begin{proof}
    It suffices to show that $\HS_{n+r-4}(\LH_{n,r}^c)\neq 0$ and $\HS_{n+r-3}(\LH_{n,r}^c)=0$. Indeed, the monomial $h=\frac{\prod_{i=1}^{n+r}x_i}{x_{n-2}x_{n-1}}$ is of type VI, and thus is a generator of $\HS_{n+r-4}(\LH_{n,r}^c)$ by Proposition~\ref{prop:gens-LH}. In particular, this means that $\HS_{n+r-4}(\LH_{n,r}^c)\neq 0$.
    
    Now, for the sake of contradiction, assume that $\HS_{n+r-3}(\LH_{n,r}^c) \neq 0$, and let $h$ be a minimal monomial generator.
    Then $\left|\supp(h)\right|=n+r-1$, and hence $\max(h)\geq n+r-1$. Then, by Proposition~\ref{prop:gens-LH}, $h$ is either of type V or VI\@. In any case, note that $n-2,n-1\notin \supp(h)$, and this implies that $\left|\supp(h)\right|\leq n+r-2$, a contradiction. 
\end{proof}

For the rest of the section we obtain results for $\LH_{n,r}^{c}$ with $r \geq 1$.
As remarked after Theorem~\ref{thm:HLQ-LH}, we will not obtain information on whether the homological shift ideals of $\HH_n^c$ have linear quotients. 

\begin{proposition}\label{prop:LH-LQ}
    Let $n\geq 6$ and $r\geq 1$ be integers. For each 
    $k \in (n-4, n+r-4]$,
    the ideal $\HS_k(\LH_{n,r}^c)$ has linear~quotients.
\end{proposition}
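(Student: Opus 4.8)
The plan is to reduce the six‑type description of the generators (Proposition~\ref{prop:gens-LH}) to a single type, recognize the resulting ideal as a variable times a squarefree Veronese ideal, and then conclude with the restriction lemma. The first observation is that for $k \in (n-4,n+r-4]$, i.e. $k \geq n-3$, \emph{every} minimal generator $h$ of $\HS_k(\LH_{n,r}^c)$ is of type VI. Indeed, $|\supp(h)| = k+2 \geq n-1$, whereas by Definition~\ref{def:gens-H-and-LH} a monomial of type I, II, or III has $\max(h) \leq n-2$, a monomial of type V has $\max(h) = n$ with $\secondmax(h) = n-3$, so that $n-2, n-1 \notin \supp(h)$, and a monomial of type IV has $\max(h) = n-1$. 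In the first three cases $|\supp(h)| \leq n-2$; in the type V case $\supp(h) \subseteq [n] \setminus \{n-2,n-1\}$, so again $|\supp(h)| \leq n-2$; and in the type IV case $|\supp(h)| \leq n-1$, with equality forcing $\supp(h) = [n-1] \ni 2$, contradicting the defining condition $2 \notin \supp(h)$ of type IV. Hence Proposition~\ref{prop:gens-LH} leaves only type VI.

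Next I would unwind the type VI condition. A type VI monomial has $n-3 \in \supp(h)$ and $\{n-2,n-1\} \cap \supp(h) = \emptyset$, since the entry $i_{q+1} \geq n$ immediately follows $i_q = n-3$ in the sorted support. Conversely, set $A \coloneqq [n-4] \cup \{n,n+1,\dots,n+r\}$; if $h$ is squarefree of degree $k+2$ with $n-3 \in \supp(h)$ and $\{n-2,n-1\} \cap \supp(h) = \emptyset$, then $\supp(h) \setminus \{n-3\} \subseteq A$ has $k+1$ elements, and since $k+1 \geq n-2 > n-3 = |A \cap [n]|$, the support must meet $\{n+1,\dots,n+r\}$; thus $\max(h) \geq n+1$ and $h$ is of type VI. Consequently the minimal generators of $\HS_k(\LH_{n,r}^c)$ are exactly the monomials $x_{n-3}\, s$, where $s$ runs over the squarefree monomials of degree $k+1$ with $\supp(s) \subseteq A$. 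Writing $\mathcal{V}$ for the squarefree Veronese ideal of degree $k+1$ in $S$ (generated by all squarefree monomials of that degree), this means $\HS_k(\LH_{n,r}^c) = x_{n-3}\cdot \mathcal{V}^{\leq\, \prod_{i\in A}x_i}$.

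Finally I would establish that this ideal has linear quotients. The ideal $\mathcal{V}$ is generated in a single degree and has linear quotients — this is classical, and in any case one can verify the criterion of Lemma~\ref{lem:LQ-lex} directly: given squarefree $g >_{\lex} f$ of equal degree, take $j \coloneqq \min(\supp(g) \setminus \supp(f))$ and any $i \in \supp(f)$ with $i > j$ (such an $i$ exists because $\supp(f)$ and $\supp(g)$ have the same cardinality and agree below $j$); then $x_j f / x_i$ is again a squarefree generator. By Lemma~\ref{lem:linear-quotients-restriction}, $\mathcal{V}^{\leq\,\prod_{i\in A}x_i}$ has linear quotients. Multiplying all of its minimal generators by the single variable $x_{n-3}$, which divides none of them, leaves every colon ideal $(m_1,\dots,m_i) : m_{i+1}$ unchanged, so $\HS_k(\LH_{n,r}^c)$ inherits linear quotients, completing the proof.

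The argument is largely bookkeeping, and the only place any real care is needed is the degree count in the first step: it is exactly the inequality $k+2 \geq n-1$ that eliminates all of the "small" types I--V and collapses the description of Proposition~\ref{prop:gens-LH} to type VI, thereby revealing $\HS_k(\LH_{n,r}^c)$ as (a shift of) a squarefree Veronese ideal. I do not anticipate a serious obstacle beyond getting this bound and the induced bookkeeping exactly right.
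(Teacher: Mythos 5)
Your proof is correct. The first half --- the degree count $k+2\ge n-1$ eliminating types I--V --- is essentially the paper's argument, but the second half takes a genuinely different route. The paper verifies the exchange condition of Lemma~\ref{lem:LQ-lex} directly on type VI generators: for $g>_{\lex}f$ it lets $s$ be the first position where the sorted supports differ and asserts that $f'=x_{j_s}\frac{f}{x_{i_s}}$ is ``clearly'' of type VI. You instead note that, in this degree range, type VI is equivalent to ``$n-3\in\supp(h)$ and $n-2,n-1\notin\supp(h)$'', so that $\HS_k(\LH_{n,r}^c)$ equals $x_{n-3}$ times a restricted squarefree Veronese ideal, and you conclude via the exchange property of the Veronese (your verification through Lemma~\ref{lem:LQ-lex} is sound), Lemma~\ref{lem:linear-quotients-restriction}, and the observation that multiplying every generator by the fixed variable $x_{n-3}$ leaves all colon ideals unchanged. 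What your route buys is robustness: the paper's choice $i=i_s$ in fact breaks down when $i_s=n-3$, since then $f'$ no longer contains $x_{n-3}$ and so is not of type VI. For instance, with $n=6$, $r=3$, $k=3$, the monomials $f=x_3x_6x_7x_8x_9$ and $g=x_1x_2x_3x_6x_7$ are both type VI generators with $\secondmax\ge n$, here $s=1$ and $|\supp(g)\setminus\supp(f)|=2$, yet $f'=x_1x_6x_7x_8x_9$ is not a generator. (The paper's argument is repairable, e.g.\ by taking $i=\max(f)$ in that case; your structural identification sidesteps the case analysis altogether.) The only cosmetic remark is that your type IV bound $|\supp(h)|\le n-1$ can be sharpened immediately to $\le n-2$ since $2\notin\supp(h)$, though your equality analysis is also valid.
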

\begin{proof}
    Let $h\in \HS_k(\LH_{n,r}^c)$ be a minimal monomial generator. 
    By Proposition~\ref{prop:gens-LH}, 
    $\left|\supp(h)\right|=k+2$ and $h$ is of type (I)--(VI). 
    As $\left|\supp(h)\right|=k+2\geq n-1$, 
    we have $\max(h)\geq n-1$. 
    If $\max(h)=n-1$, then we must have $h=\prod_{i=1}^{n-1}x_i$; 
    however, $h$ is then not of type IV, showing $\max(h) \neq n - 1$.
    Similarly, if $\max(h)=n$, then $\secondmax(h)=n-3$, and hence $n-2,n-1\notin \supp(h)$. 
    This implies that $\left|\supp(h)\right|\leq n-2$, a contradiction. 
    Therefore, we must have  $\max(h)\geq n+1$, and hence, $h$ is always of type VI\@. 
    We claim that $\secondmax(h)\neq n-3$. Suppose otherwise that $\secondmax(h)=n-3$. 
    Then $\left|\supp(h)\right|\leq n-3+1=n-2$, a contradiction to the fact that $\left|\supp(h)\right|\geq n-1$. 
    To summarize, $\HS_k(\LH_{n,r}^c)$ is generated by monomials $h$ of type VI where $\left|\supp(h)\right|=k+2$, and $\secondmax(h)\geq n$.

    Now, to show that $\HS_k(\LH_{n,r}^c)$ has linear~quotients, by Lemma~\ref{lem:LQ-lex} and Proposition~\ref{prop:gens-LH}, it suffices to show that for any $f,g$ where 
    \begin{align*}
        &\left|\supp(f)\right|=\left|\supp(g)\right|=k+2,\\
        &\begin{multlined}
            \text{$f$ and $g$ are of type VI, where $\secondmax(f)\geq n$ and $\secondmax(g)\geq n$},
        \end{multlined}
    \end{align*}
    there exist $i,j\in [n+r]$ such that $i>j$, $j\in \supp(g)\setminus \supp(f)$, and $x_j\frac{f}{x_i}$ is also of type VI, and $\secondmax(x_j\frac{f}{x_i})\geq n$. 

    Set
    \begin{align*}
        f&=x_{i_1}x_{i_2}\cdots x_{i_{k+2}},\\
        g&=x_{j_1}x_{j_2}\cdots x_{j_{k+2}}, 
    \end{align*}
    where $1\leq i_1<i_2<\cdots <i_{k+2}\leq n+r$ and $1\leq j_1<j_2<\cdots <j_{k+2}\leq n+r$. Let $s\in [k+2]$ be an index such that $j_1=i_1,\dots, j_{s-1}=i_{s-1},$ and $j_{s}<i_{s}$. Such an $s$ exists since $g>_{\lex} f$, and in particular we have $j_1\leq i_1$. If $\left|\supp(g)\setminus \supp(f)\right|=1$, then $j=j_s$ and $i=i_s$ satisfies the required condition, as in this case we have $x_j\frac{f}{x_i}=g$. Thus, we can assume that $\left|\supp(g)\setminus \supp(f)\right|\geq 2$. In particular, this implies that $s\neq k+2$.
    
    Set $f'\coloneqq x_{j_s}\frac{f}{x_{i_s}}$. It is clear that $f'$ of type VI\@. Thus, it suffices to show that $\secondmax(f')\geq n$. Indeed, we have
    \[
    \secondmax(f')=\begin{cases}
        \secondmax(f) & \text{if $s\leq k$}, \\
        \secondmax(g) & \text{if $s=k+1$},
    \end{cases}
    \]
    which is at least $n$, as desired.
\end{proof}

\begin{proposition}\label{prop:LH-no-LQ}
    Let $n\geq 6$ and $r\geq 1$ be integers. For each $2\leq k\leq n-4$, the ideal $\HS_k(\LH_{n,r}^c)$ does not have linear quotients.
\end{proposition}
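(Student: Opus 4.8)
The plan is to exhibit, for each $k$ in the range $2 \le k \le n-4$, a monomial $m$ such that the restricted ideal $\big(\HS_k(\LH_{n,r}^c)\big)^{\le m}$ fails to have linear quotients, by arranging that this restriction contains (or \emph{equals}) an ideal of the shape covered by Lemma~\ref{lem:no-LQ-example}, namely a two-generated ideal $\big(\frac{\prod_{i=1}^{N} x_i}{x_a x_b}, \frac{\prod_{i=1}^{N} x_i}{x_c x_d}\big)$ in the appropriate subring. Once such an $m$ is found, Lemma~\ref{lem:no-LQ-example} says the restriction has no linear resolution, hence no linear quotients, and then Lemma~\ref{lem:linear-quotients-restriction} (in the contrapositive) forces $\HS_k(\LH_{n,r}^c)$ itself to not have linear quotients. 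Concretely, I expect the cleanest route is to first handle the single key case, say $k = n-4$, where the restricted ideal is most likely to be literally two-generated, and then reduce the general range $2 \le k \le n-4$ to this case using Lemma~\ref{lem:linear-quotients-induced}: passing to a suitable induced subgraph of $\LH_{n,r}$ (killing some of the low-indexed path vertices, which shortens the ``path part'' of $\HH_n$) produces a graph of the form $\LH_{n',r}^c$ with $n' = k+4 \le n$, for which we have already shown $\HS_{k}(\LH_{n',r}^c) = \HS_{n'-4}(\LH_{n',r}^c)$ has no linear quotients; since $\HS_k$ of the smaller complement is a restriction of $\HS_k$ of the bigger one, and restrictions inherit linear quotients, we are done.

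For the base case the main task is to pin down $\mingens\big((\HS_{n-4}(\LH_{n,r}^c))^{\le m}\big)$ for a well-chosen $m$. Using Proposition~\ref{prop:gens-LH}, a degree-$(n-2)$ generator of $\HS_{n-4}(\LH_{n,r}^c)$ supported inside $\{x_1,\dots,x_{n+1}\}$ and of type~I--VI is severely constrained: type~I,\,II,\,III,\,IV,\,V all cap $\max(h)$ at $n$, which caps $|\supp(h)|$ below $n-2$ (the argument already appearing in the proof of Proposition~\ref{prop:LH-LQ}), so any such generator must be of type~VI with $\max(h) = n+1$, $\secondmax(h) \ge n$. With $m = \frac{\prod_{i=1}^{n+1} x_i}{x_{n-2}}$ or a similar choice omitting exactly one of $x_{n-2}, x_{n-1}$, the only type-VI monomials of degree $n-2$ dividing $m$ should be $h_1 = \frac{\prod_{i=1}^{n+1}x_i}{x_{n-2}x_{n-1}}$ together with one more of the form $\frac{\prod_{i=1}^{n+1}x_i}{x_{n-2}x_j}$ for some small $j$; I would choose $m$ so that exactly two survive and they differ in precisely the ``forbidden pair'' pattern of Lemma~\ref{lem:no-LQ-example}. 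Verifying there are no extra generators hiding — in particular ruling out generators that use a vertex $x_{n+i}$ with $i \ge 2$, which is why restricting the support to $\{x_1,\dots,x_{n+1}\}$ via $m$ matters — is the bookkeeping core of this step.

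The main obstacle I anticipate is getting the restriction to be \emph{exactly} two-generated rather than merely to contain such a pair: Lemma~\ref{lem:no-LQ-example} applies to the two-generated ideal on the nose, and an over-generated restriction need not inherit the failure. So the delicate point is the choice of $m$ — it must kill every type-I--VI monomial of the right degree except the two I want, which forces $m$ to avoid enough variables (to block long paths and extra leaves) while still being divisible by both target generators. I would resolve this by taking the support of $m$ to be exactly the vertex set of an induced $\LH_{n,1}^c$ (i.e.\ restrict to one leaf, $r \mapsto 1$, which is legitimate since $r \ge 1$) and then, within that, choosing $m = \frac{\prod_{i \in V} x_i}{1}$ only after a short case-check confirms that at homological degree $n-4$ the full generating set of $\HS_{n-4}(\LH_{n,1}^c)$ is itself just the pair $\big\{\frac{\prod x_i}{x_{n-2}x_{n-1}},\, \frac{\prod x_i}{x_{n-2}x_{n}}\big\}$ (both type~VI), whence Lemma~\ref{lem:no-LQ-example} applies directly to $\HS_{n-4}(\LH_{n,1}^c)$ with no restriction needed at all, and Lemma~\ref{lem:linear-quotients-induced} propagates the failure back to $\LH_{n,r}^c$ and down to all $k$ in $[2, n-4]$.
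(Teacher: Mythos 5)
Your overall strategy --- reduce to $r=1$, restrict to a well-chosen monomial $m$, land on a two-generated ideal of the shape in Lemma~\ref{lem:no-LQ-example}, and propagate the failure back via Lemma~\ref{lem:linear-quotients-restriction} --- is exactly the paper's strategy. However, the execution has a genuine gap at the step you yourself flag as the crux. Your proposed base case is false: $\HS_{n-4}(\LH_{n,1}^c)$ is \emph{not} the two-element ideal you claim. Running through Definition~\ref{def:gens-H-and-LH} for degree $n-2$ monomials supported in $[n+1]$: type~IV contributes $\prod_{i\in\{1,3,4,\dots,n-1\}}x_i$, type~V contributes $\prod_{i\in[n-3]\cup\{n\}}x_i$, and type~VI contributes the $n-3$ monomials obtained by deleting one variable from $\{x_j \colon j\in[n-4]\cup\{n\}\}$ out of $\prod_{i\in[n-3]\cup\{n,n+1\}}x_i$; that is $n-1$ generators in total (already $5$ when $n=6$). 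Worse, the specific pair you name, $\frac{\prod x_i}{x_{n-2}x_{n-1}}$ and $\frac{\prod x_i}{x_{n-2}x_{n}}$, shares the index $n-2$, so Lemma~\ref{lem:no-LQ-example} (which requires four \emph{distinct} indices) does not apply; in fact $\bigl(\frac{P}{x_ax_b},\frac{P}{x_ax_c}\bigr)$ has colon ideal generated by a single variable and therefore \emph{does} have linear quotients. Your reduction of general $k$ to $k=n-4$ is also shaky: deleting low-indexed path vertices from $\HH_n$ does not produce $\HH_{n'}$, because $x_{n-1}$ is non-adjacent to exactly $x_1$; once $x_1$ is removed, $x_{n-1}$ becomes adjacent to every remaining vertex and the induced subgraph is no longer of the form $\HH_{n-1}$ under any relabelling.

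The paper sidesteps both issues by choosing, for each $k$ separately, the monomial $m = x_1\cdot(x_3x_4\cdots x_k)\cdot(x_{n-3}x_{n-2}x_{n-1}x_nx_{n+1})$ inside $\LH_{n,1}^c$. Its support has size $k+4$ and, crucially, omits $x_2$. A degree-$(k+2)$ generator dividing $m$ must drop exactly two variables of $\supp(m)$, and the type analysis then leaves only $m/(x_nx_{n+1})$ (type~IV, since $1\in\supp$ and $2\notin\supp$) and $m/(x_{n-2}x_{n-1})$ (type~VI): the candidates with $\max(h)=n$ are killed because type~V would force three variables $x_{n-2},x_{n-1},x_{n+1}$ to be dropped. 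These two generators omit the \emph{disjoint} pairs $\{n,n+1\}$ and $\{n-2,n-1\}$, so Lemma~\ref{lem:no-LQ-example} applies to the restriction $\bigl(\HS_k(\LH_{n,1}^c)\bigr)^{\leq m}$ on the nose. If you want to salvage your write-up, replace the base-case-plus-induction scheme with this single uniform choice of $m$ and carry out the three-case check on $\max(h)\in\{n-1,n,n+1\}$.
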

\begin{proof}
    As $\LH_{n,r}^c$ contains $\LH_{n,1}^c$ as an induced subgraph for any $r\geq 1$, in view of Lemma~\ref{lem:linear-quotients-induced}, it suffices to show that $\HS_k(\LH_{n,1}^c)$ does not have linear quotients. Set 
    \[m\coloneqq (x_1)(x_3x_4\cdots x_k)(x_{n-3}x_{n-2}\cdots x_{n+1}).\] 
    It now suffices to show that ${\left(\HS_k(\LH_{n,1}^c)\right)}^{\leq m}$ does not have linear quotients.

    Consider a minimal monomial generator 
    $h=x_{i_1}x_{i_2}\cdots x_{i_{k+2}}$ of 
    ${\left(\HS_k(\LH_{n,1}^c)\right)}^{\leq m}$, 
    where $1\leq i_1<i_2<\cdots <i_{k+2}\leq n+1$. 
    Equivalently, $h$ is a generator of $\HS_k(\LH_{n,1}^c)$, 
    and $h \mid m$. Since $\left|\supp(h)\right|=k+2$ and $\left|\supp(m)\right|=k+4$, the number $\max(h)$ must be among the three largest members of $\supp(m)$, i.e., in the set $\{n-1,\ n,\ n+1\}$.
    
    Suppose that $\max(h)=n-1$. 
    Then, $h=(x_1)(x_3x_4\cdots x_k)(x_{n-3}x_{n-2}x_{n-1})=\frac{m}{x_{n}x_{n+1}}$. It follows from Proposition~\ref{prop:gens-LH} that $h$ is a minimal monomial generator of $\HS_k(\LH_{n,1}^c)$.
    
    Suppose that $\max(h)=n$. Since $h$ is a minimal monomial generator of $\HS_k(\LH_{n,1}^c)$, it must be of type V\@. In particular, this means that $n-2,n-1\notin \supp(h)$. Recall that we also have $n+1\notin \supp(h)$. Therefore, $\left|\supp(h)\right|\leq k+1$, a contradiction. 

    Finally, suppose that $\max(f)=n+1$. Since $h$ is a minimal monomial generator of $\HS_k(\LH_{n,1}^c)$, it must be of type VI\@. In particular, this means that $n-2,n-1\notin \supp(h)$. Therefore, we have $h=\frac{m}{x_{n-1}x_{n-2}}$, which is a minimal monomial generator of $\HS_k(\LH_{n,1}^c)$ by Proposition~\ref{prop:gens-LH}. 

    Therefore, $\HS_k(\LH_{n,1}^c)=\left( \frac{m}{x_{n}x_{n+1}},\frac{m}{x_{n-2}x_{n-1}} \right)$, which does not have linear quotients by Lemma~\ref{lem:no-LQ-example}, and this completes the proof.
\end{proof}

\begin{proof}[Proof of Theorem~\ref{thm:HLQ-LH}]
    The theorem follows immediately from Propositions~\ref{prop:proj-LH},~\ref{prop:LH-LQ}, and~\ref{prop:LH-no-LQ}.
\end{proof}
    
\section{\texorpdfstring{$\ACH_{n,r}^c$}{ACH} and \texorpdfstring{$\CH_{n,r}^c$}{CH}} \label{sec:ACH-CH}

Just as in the previous section, let $r\geq 0$ and $n\geq 6$ be two integers. Let $\HH_n$ be the graph with the vertex set
\[
V(\HH_n) = \{x_{r+i} \colon i\in [n] \}
\]
and the edge set
\begin{align*}
    E(\HH_n) =& \{x_{r+i}x_{r+i+1} \colon i\in [n-5]\}\cup \{x_{r+n-4}x_{r+n-2}, x_{r+n-2}x_{r+n-3}\}\\ &
    \cup \{x_{r+n-1}x_{r+j} \colon j\in [n]\setminus \{1, n-1\}\}\cup  \{x_{r+n}x_{r+j} \colon j\in [n]\setminus \{n-3, n\}\}.
\end{align*}

It is clear that this graph is exactly $\HH_n$ from Section~\ref{sec:H-and-LH} with every index increased by $r$. This helps us define the main class of graphs in this section.

Let $\CH_{n,r}$ and $\ACH_{n,r}$ be the graphs with vertex set
\[
V(\CH_{n,r}) = V(\ACH_{n,r}) = V(\HH_n) \cup \{x_1,x_2,\dots, x_{r}\}
\]
and edge sets
\begin{align*}
    E(\CH_{n,r})&= E(\HH_n) \cup \{x_ix_j\colon i,j\in [r], i\neq j \} \cup  \{ x_ix_{r+j} \colon i\in [r] , \ j\in \{n-1,n\}  \},\\
    E(\ACH_{n,r}) &= E(\HH_n) \cup \{x_ix_j\colon i,j\in [r], i\neq j \} \cup  \{ x_ix_{r+j} \colon i\in [r] , \ j\in \{n-2,n-1,n\}  \}\\
    &= E(\CH_{n,r}) \cup  \{ x_ix_{r+n-2} \colon i\in [r] \}.\\
\end{align*}

For convenience, we note down the edge set of their complements:
\begin{align*}
    E(\ACH_{n,r}^c) &= E(\HH_n^c) \cup \{ x_ix_{r+j} \colon i\in [r] , \ j\in [n-3]  \},\\
    E(\CH_{n,r}^c) &= E(\ACH_{n,r}^c) \cup \{ x_ix_{r+n-2} \colon i\in [r]\} .
\end{align*}

It is clear that the induced subgraph of $\ACH_{n,r}$ (or $\CH_{n,r}$) with the vertex set $\{x_{r+j}\colon j\in [n]\}$ is exactly $\HH_n$ with our modified indices above. 
Pictorially, $\ACH_{n,r}^c$ is exactly $\HH_{n}^c$ with $r$ new vertices incident to $x_{r+j}$ for all $j\in [n-3]$, 
while $\CH_{n,r}^c$ is exactly $\HH_{n}^c$ with $r$ new vertices incident to $x_{r+j}$ for any $j\in [n-2]$, i.e., all vertices of $\CH_n$ except for its two leaves. 
Due to these facts, we use the terms \emph{$\HH_{n}^c$ with cones} and \emph{$\HH_{n}^c$ with almost-cones} for $\CH_{n,r}^c$ and $\ACH_{n,r}^c$, respectively.

We illustrate the two graphs $\CH_{n,1}$ and $\ACH_{n,1}^c$  below.

\begin{figure}[h!]
    \centering
        \begin{tikzpicture}[scale=0.5]

            \draw [fill] (0,0) circle [radius=0.1];
            \draw [fill] (2,0) circle [radius=0.1];
            \draw [fill] (4,0) circle [radius=0.1];
            \draw [fill] (6,0) circle [radius=0.1];
            \draw [fill] (8,0) circle [radius=0.1];
            \draw [fill] (8,-3) circle [radius=0.1];
            \draw [fill] (0,3) circle [radius=0.1];

            \node at (6,3.5) {$x_1$};
            \node at (0,-0.5) {$x_2$};
            \node at (2,-0.5) {$x_3$};
            \node at (4,-0.5) {$x_4$};
            \node at (6,-0.5) {$x_6$};
            \node at (8,0.5) {$x_5$};
            \node at (8,-3.5) {$x_7$};
            \node at (0,3.5) {$x_8$};
                        
            \draw (0,0)--(2,0)--(4,0)--(6,0)--(8,0);
            \draw (2,0)--(8,-3)--(4,0);
            \draw (6,0)--(8,-3)--(8,0);
            \draw (0,0)--(0,3)--(2,0);
            \draw (4,0)--(0,3)--(6,0);
            \draw [thick,blue] (0,3)--(6,3)--(8,-3);

            \draw [fill, black] (6,3) circle [radius=0.1];

            \draw  (0, 3) to[out=240, in=80]  (-1, 0) to[out=-100, in=170]  (8, -3);
    \end{tikzpicture}
        \hspace{4em}
    \begin{tikzpicture}[scale=0.5]
    
            \draw [fill] (0,0) circle [radius=0.1];
            \draw [fill] (2,0) circle [radius=0.1];
            \draw [fill] (4,0) circle [radius=0.1];
            \draw [fill] (6,0) circle [radius=0.1];
            \draw [fill] (8,0) circle [radius=0.1];
            \draw [fill] (8,-3) circle [radius=0.1];
            \draw [fill] (0,3) circle [radius=0.1];

            \node at (0,-0.5) {$x_2$};
            \node at (2,-0.5) {$x_3$};
            \node at (4,-0.5) {$x_4$};
            \node at (6,-0.5) {$x_6$};
            \node at (8,0.5) {$x_5$};
            \node at (8,-3.5) {$x_7$};
            \node at (0,3.5) {$x_8$};
            \node at (6,3.5) {$x_1$};
                        
            \draw (0,0)--(2,0)--(4,0)--(6,0)--(8,0);
            \draw  (2,0)--(8,-3)--(4,0);
            \draw (6,0)--(8,-3)--(8,0);
            \draw (0,0)--(0,3)--(2,0);
            \draw (4,0)--(0,3)--(6,0);

            \draw [thick,blue] (0,3)--(6,3)--(8,-3);
            \draw [thick,blue] (6,0)--(6,3);

            \draw [fill, black] (6,3) circle [radius=0.1];

            \draw (0, 3) to[out=240, in=80]  (-1, 0) to[out=-100, in=170]  (8, -3);
    \end{tikzpicture}
        \caption{$\CH_{7,1}$ and $\ACH_{7,1}$}
        \label{fig:CH71-ACH71}
\end{figure}

The goal of this section is to prove the following two theorems regarding which homological shift ideals of these two graphs have linear quotients.

\begin{theorem}\label{thm:HLQ-TH-graph}
    For each $n\geq 6$ and $r\geq 0$, we have
    \begin{enumerate}[label=(\roman*)]
        \item $\HS_k(\ACH_{n,r}^c)\neq 0$ has linear quotients if $2\leq k < n-4$;
        \item $\HS_k(\ACH_{n,r}^c)\neq 0$ does not have linear quotients if $n-4\leq k \leq n+r-4$;
        \item $\HS_k(\ACH_{n,r}^c)=0$ if $n+r-4<k$.
    \end{enumerate}
\end{theorem}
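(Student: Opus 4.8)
The plan is to run the same program as in Section~\ref{sec:H-and-LH}: produce a perfect elimination ordering of $\ACH_{n,r}$, classify the minimal generators of $\HS_k(\ACH_{n,r}^c)$ by type, and then dispatch the three items of the theorem separately. First I would check, exactly as in Lemma~\ref{lem:elm-order-H-and-LH}, that $x_1 > x_2 > \cdots > x_{r+n}$ is a perfect elimination ordering of $\ACH_{n,r}$ (so $\ACH_{n,r}$ is chordal): for $i \in [r]$ one has $N_{G_i}(x_i) = \{x_{i+1},\dots,x_r\}\cup\{x_{r+n-2},x_{r+n-1},x_{r+n}\}$, which is a clique, and for $i > r$ the verification is the one already done in Lemma~\ref{lem:elm-order-H-and-LH}. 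Using Lemma~\ref{lem:gens-HS} and the fact that in $\ACH_{n,r}^c$ each cone vertex $x_i$ ($i\in[r]$) is adjacent precisely to $x_{r+1},\dots,x_{r+n-3}$, one obtains: a squarefree monomial $h$ with $|\supp(h)| = k+2$ is a minimal generator of $\HS_k(\ACH_{n,r}^c)$ if and only if either (a)~$\supp(h)\cap[r]\ne\emptyset$ and $r < \max(h) \le r+n-3$, or (b)~$\supp(h)\subseteq\{x_{r+1},\dots,x_{r+n}\}$ and $h$ is of type I--V of Definition~\ref{def:gens-H-and-LH} (indices shifted by $r$), or (c)~$\supp(h)\cap[r]\ne\emptyset$, $\max(h) > r+n-3$, and the ``$\HH_n$-part'' $\prod_{i\in\supp(h),\,i>r}x_i$ is of type I--V. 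The mechanism is that once $\max(h) > r+n-3$ no cone vertex of $h$ can serve as a separator, so a separator must lie among $x_{r+1},\dots,x_{r+n}$, which by Proposition~\ref{prop:gens-H} forces the $\HH_n$-part to be a valid $\HH_n$-pattern.

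For item~(iii), the monomial $\frac{\prod_{i=1}^{r+n}x_i}{x_{r+n-2}x_{r+n-1}}$ has $\HH_n$-part of shifted type V, hence is a generator of $\HS_{n+r-4}(\ACH_{n,r}^c)\ne 0$; and a minimal generator of $\HS_{n+r-3}(\ACH_{n,r}^c)$ would omit exactly one vertex, so its $\HH_n$-part would be a valid $\HH_n$-pattern of size $n$ or $n-1$, i.e.\ a generator of $\HS_{n-2}(\HH_n^c)$ or $\HS_{n-3}(\HH_n^c)$, both of which vanish by Proposition~\ref{prop:proj-LH}, a contradiction. For item~(ii), put $p := k-(n-4)$ when $n-4\le k\le n+r-4$, so $0\le p\le r$. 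The induced subgraph of $\ACH_{n,r}^c$ on $\{x_1,\dots,x_p\}\cup\{x_{r+1},\dots,x_{r+n}\}$ is isomorphic to $\ACH_{n,p}^c$, and $k=n+p-4$ is the top homological degree in which $\HS_\bullet(\ACH_{n,p}^c)$ is nonzero by~(iii); so by Lemma~\ref{lem:linear-quotients-induced} it suffices to prove that $\HS_{n+p-4}(\ACH_{n,p}^c)$ fails to have linear quotients for every $p\ge 0$. Running the generator classification in this top degree (support of size $n+p-2$, hence exactly two omitted vertices) and using that $\HS_{n-4}(\HH_n^c)$ has exactly two generators (of types IV and V, by Proposition~\ref{prop:gens-H}), one finds that $\HS_{n+p-4}(\ACH_{n,p}^c)$ is minimally generated by exactly
\[
\frac{\prod_{i=1}^{p+n}x_i}{x_{p+2}\,x_{p+n}} \qquad\text{and}\qquad \frac{\prod_{i=1}^{p+n}x_i}{x_{p+n-2}\,x_{p+n-1}},
\]
the remaining possibilities (omitting a cone vertex) being ruled out precisely because $\HS_{n-2}(\HH_n^c)=\HS_{n-3}(\HH_n^c)=0$. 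Since the four omitted vertices are distinct for $n\ge 6$, Lemma~\ref{lem:no-LQ-example} gives the claim.

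For item~(i), when $2 \le k < n-4$ every generator has support of size $k+2 \le n-3$, and I would verify the criterion of Lemma~\ref{lem:LQ-lex} for the lex order from $x_1 > \cdots > x_{r+n}$. Given generators $g >_{\lex} f$, let $s$ be the first position where they differ, with entries $j_s < i_s$; if $|\supp(g)\setminus\supp(f)| = 1$ then $x_{j_s}\frac{f}{x_{i_s}} = g$, so assume $|\supp(g)\setminus\supp(f)|\ge 2$, forcing $s < k+2$. When $f$ is of type~(a) the exchange $x_{j_s}\frac{f}{x_{i_s}}$ — or, should it empty the cone part or the $\HH_n$-part of $f$, the variant lowering the maximal vertex of $f$ — is again of type~(a), so no obstruction arises. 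The main obstacle is the case where $f$ (equivalently its $\HH_n$-part) is of type I--V: lowering $x_{i_s}$ naively can destroy the type — it can fill a gap of a type-I pattern or push the second-largest vertex of a type-V pattern below $r+n-3$ — so one must show, by a case analysis on the type of the $\HH_n$-part of $f$ and on the position $s$, that there is always an admissible exchange $x_j\frac{f}{x_i}$ (with $j\in\supp(g)\setminus\supp(f)$, $i\in\supp(f)$, $j<i$) that remains a generator, lowering as the case demands the maximal vertex of $f$, its second-largest vertex, or $x_{i_s}$. The slack that makes this always possible is precisely the bound $k+2\le n-3$ on the support size; the argument parallels the proof of Proposition~\ref{prop:LH-LQ}, now with all of types I--V and their interaction with the cone vertices to be treated.
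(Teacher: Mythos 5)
Your overall strategy is the paper's own: establish the perfect elimination ordering $x_1>\cdots>x_{r+n}$ (Lemma~\ref{lem:elimination-ordering-CH-TH}), classify the minimal generators of $\HS_k(\ACH_{n,r}^c)$ via Lemma~\ref{lem:gens-HS}, and then treat the three items separately. Your generator classification (a)/(b)/(c) is correct and is equivalent to the paper's types 1, 2, 3A, 3B, 4, 5 in Proposition~\ref{prop:gens-TH}, and your reasoning that for $\max(h)>r+n-3$ the separating vertex must lie in the $\HH_n$-part is exactly the paper's. Your item~(iii) matches Proposition~\ref{prop:proj-CH-TH}, and your item~(ii) matches Proposition~\ref{prop:TH-no-LQ} essentially verbatim: restrict to the induced copy of $\ACH_{n,p}^c$ with $p=k-(n-4)$, show the top nonzero homological shift ideal has exactly the two generators $\prod_i x_i/(x_{p+2}x_{p+n})$ and $\prod_i x_i/(x_{p+n-2}x_{p+n-1})$, and invoke Lemmas~\ref{lem:linear-quotients-induced} and~\ref{lem:no-LQ-example}. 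These parts are complete and correct.

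The gap is item~(i). You correctly identify the method (the lex criterion of Lemma~\ref{lem:LQ-lex}), correctly dispose of the easy cases (your type~(a), which covers the paper's types 1 and 2), and correctly locate where the hypothesis $k<n-4$ must enter; but you then assert that ``one must show, by a case analysis\dots that there is always an admissible exchange'' without carrying out that analysis. This case analysis is the technical core of the paper's proof: it occupies Lemmas~\ref{lem:f-type-3A-or-3B} through~\ref{lem:f-type-5}, and it is not routine. In particular, when $f$ is of type 4 ($\max(f)=r+n-1$, $r+1\in\supp(f)$, $r+2\notin\supp(f)$) and $g$ is of type 5 with $\supp(g)\setminus\supp(f)=\{r+2,\,r+n\}$, \emph{no} exchange preserves the type of $f$: one is forced to take $f'=x_{r+2}f/x_{r+n-1}$ and show that $f'$ lands in type 2 or 3A, and it is exactly here that the inequality $|\supp(f')|=k+2<(n-4)+2$ is used to produce a gap in $[\min Q_{f'},\secondmax(f')]$ (Lemma~\ref{lem:f-type-4-g-type-5}); similarly, type 3A/3B with $s=k+1$ requires an argument that pins down $g$ to be of type 4 before a valid exchange can be exhibited. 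Since these are the steps on which the truth of item~(i) actually rests, your proposal as written establishes (ii) and (iii) but only outlines (i).
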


\begin{theorem}\label{thm:HLQ-CH-graph}
    For each $n\geq 6$ and $r\geq 0$, we have
    \begin{enumerate}[label=(\roman*)]
        \item $\HS_k(\CH_{n,r}^c)\neq 0$ has linear quotients if $2\leq k < n-4$ or $n-4< k \leq n+r-4$;
        \item $\HS_{n-4}(\CH_{n,r}^c)\neq 0$ does not have linear quotients;
        \item $\HS_k(\CH_{n,r}^c)=0$ if $n+r-4<k$.
    \end{enumerate}
\end{theorem}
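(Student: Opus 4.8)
The plan is to follow the template of Section~\ref{sec:H-and-LH}: fix a perfect elimination ordering for $\CH_{n,r}$, read off the minimal generators of $\HS_k(\CH_{n,r}^c)$ from Lemma~\ref{lem:gens-HS}, and then deduce the three statements. Throughout, write $\HH_n$ for the induced subgraph of $\CH_{n,r}$ on $\{x_{r+1},\dots,x_{r+n}\}$, which is literally $\HH_n$ with indices shifted up by $r$. First I would check, just as in Lemma~\ref{lem:elm-order-H-and-LH}, that $x_1>x_2>\cdots>x_{r+n}$ is a perfect elimination ordering for $\CH_{n,r}$: deleting a cone vertex $x_i$ with $i\le r$ leaves the neighborhood $\{x_{i+1},\dots,x_r,x_{r+n-1},x_{r+n}\}$, which is a clique since $\{x_1,\dots,x_r\}$ is a clique and every cone vertex is joined to both $x_{r+n-1}$ and $x_{r+n}$, and once all the cone vertices have been removed Lemma~\ref{lem:elm-order-H-and-LH} applies. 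Hence $\CH_{n,r}$ is chordal and Lemma~\ref{lem:gens-HS} is available.

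Next I would classify, for $k\ge 2$, the minimal generators of $\HS_k(\CH_{n,r}^c)$: these are the squarefree monomials $h$ of degree $k+2$ with $\supp(h)\not\subseteq[r]$ such that, if $\supp(h)\cap[r]=\emptyset$, then $h$ is of type I--V of Definition~\ref{def:gens-H-and-LH} with indices shifted up by $r$; and if $\supp(h)\cap[r]\neq\emptyset$, then either $\max(h)\le r+n-2$, or $\max(h)=r+n-1$ with $r+1\in\supp(h)$ and $r+2\notin\supp(h)$, or $\max(h)=r+n$ with $\secondmax(h)=r+n-3$. The first alternative holds because the induced subgraph on $\{x_{r+1},\dots,x_{r+n}\}$ is $\HH_n$, so Proposition~\ref{prop:gens-H} applies verbatim. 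For the second, one uses that in $\CH_{n,r}$ a cone vertex is non-adjacent precisely to $x_{r+1},\dots,x_{r+n-2}$, so a cone vertex of $\supp(h)$ can serve as the witness of Lemma~\ref{lem:gens-HS} exactly when every larger member of $\supp(h)$ lies in $\{r+1,\dots,r+n-2\}$; when $\max(h)=r+n-1$ or $\max(h)=r+n$ the witness is forced to be $x_{r+1}$ or $x_{r+n-3}$ respectively, and chasing the neighborhoods of those vertices produces the stated type-IV-like and type-V-like shapes. Call a minimal generator meeting $[r]$ a \emph{cone generator}.

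Parts (iii) and (ii) are then short. For (iii): every generator just described has degree at most $r+(n-2)=n+r-2$ --- type I--V monomials have degree at most $n$, and a cone generator with $\max(h)\in\{r+n-1,r+n\}$ omits at least two of $x_{r+2},x_{r+n-1},x_{r+n}$ --- while $\prod_{i=1}^{r}x_i\cdot\prod_{j=1}^{n-2}x_{r+j}$ is a cone generator of degree $n+r-2$; hence $\pd I(\CH_{n,r}^c)=n+r-4$, which is (iii) (the case $r=0$ is also covered by Proposition~\ref{prop:proj-LH}). For (ii): $\HH_n^c$ occurs as an induced subgraph of $\CH_{n,r}^c$, so by Lemma~\ref{lem:linear-quotients-induced} it is enough to show that $\HS_{n-4}(\HH_n^c)$ has no linear quotients; but by Proposition~\ref{prop:gens-H} the only type I--V monomials of degree $n-2$ are $\frac{\prod_{i=1}^n x_i}{x_{n-2}x_{n-1}}$ (type V) and $\frac{\prod_{i=1}^n x_i}{x_2x_n}$ (type IV), so $\HS_{n-4}(\HH_n^c)$ is precisely the two-generated ideal of Lemma~\ref{lem:no-LQ-example} (here $n\ge 6$ guarantees the four indices are distinct), which has no linear resolution, hence no linear quotients.

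The bulk of the work is part (i), which covers $k\in[2,n-4)\cup(n-4,n+r-4]$; I would prove it with Lemma~\ref{lem:LQ-lex} using the order $x_1>\cdots>x_{r+n}$, for which every cone generator is lex-larger than every type I--V generator. There are three kinds of pairs $g>_{\lex}f$ to treat. When $f$ and $g$ are both of type I--V, the exchange condition of Lemma~\ref{lem:LQ-lex} is exactly the statement that $\HS_k(\HH_n^c)$ has linear quotients, i.e.\ the $r=0$ case of the theorem, which I would prove separately by a case analysis on Definition~\ref{def:gens-H-and-LH} parallel to the proof of Proposition~\ref{prop:gens-H}. When $g$ is a cone generator and $f$ is of type I--V, one takes $j$ to be any cone vertex dividing $g$ and $i=\max(f)$; deleting $\max(f)$ from a type I--V monomial always brings the maximum down to at most $r+n-2$, so $x_j\frac{f}{x_i}$ is again a cone generator, as required. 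The remaining case --- $f$ and $g$ both cone generators --- is the main obstacle: one must choose the exchange variables so that $x_j\frac{f}{x_i}$ stays in the generating set, in particular without creating the forbidden pattern $r+1\in\supp,\ r+2\in\supp$ below a maximum of $r+n-1$, or $r+n-2,r+n-1\in\supp$ below a maximum of $r+n$. I expect the cleanest route is to split the cone generators into the blocks $\max(h)\le r+n-2$, $\max(h)=r+n-1$, $\max(h)=r+n$, taking $i=\max(f)$ for $f$ in the latter two blocks (which drops the maximum back into the first block) and choosing $i$ near the top of $\supp(f)$ in the first; carrying this out, together with the $\HH_n^c$ sub-result, is where essentially all of the effort lies.
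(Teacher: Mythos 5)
Your skeleton is the same as the paper's: establish the perfect elimination ordering $x_1>\cdots>x_{r+n}$, read off the minimal generators of $\HS_k(\CH_{n,r}^c)$ from Lemma~\ref{lem:gens-HS} (your classification agrees with Proposition~\ref{prop:gens-CH}, with your ``cone generators of maximum at most $r+n-2$'' playing the role of the paper's type 3C), prove (iii) by bounding degrees, and prove (ii) by restricting to the induced $\HH_n^c$ and exhibiting $\HS_{n-4}(\HH_n^c)$ as the two-generator ideal of Lemma~\ref{lem:no-LQ-example}. All of that is correct, up to one small slip in (iii): the bound ``type I--V monomials have degree at most $n$'' is not enough when $r=1$ (where you need degree at most $n+r-2=n-1$); you should instead observe that each of types I--V omits at least two vertices of $\{x_{r+1},\dots,x_{r+n}\}$, so these generators have degree at most $n-2$. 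Your handling of the mixed case in (i) --- every cone generator is lex-larger than every $Q$-supported generator, and taking $i=\max(f)$, $j$ a cone vertex of $g$ drops $\max$ to $\secondmax(f)\leq r+n-2$ and lands in type 1, 2, or 3C --- is correct and cleaner than the paper's type-by-type treatment.

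The genuine gap is that the core of part (i) is announced rather than proved. The two remaining classes of pairs in the Lemma~\ref{lem:LQ-lex} verification --- both generators supported in $\{x_{r+1},\dots,x_{r+n}\}$ (equivalently, the assertion that $\HS_k(\HH_n^c)$ has linear quotients for $2\leq k<n-4$, i.e., Theorem~\ref{thm:HLQ-H}(i)), and both generators containing a cone vertex --- are exactly where the theorem's content lies, and you defer both (``where essentially all of the effort lies''). In the paper this is Lemmas~\ref{lem:f-type-1}--\ref{lem:f-type-5}, and the decisive case is Lemma~\ref{lem:f-type-4-g-type-5}: $f$ of type 4 against $g$ of type 5 with $\supp(g)\setminus\supp(f)=\{r+2,r+n\}$, where the only admissible exchange is $x_{r+2}f/x_{r+n-1}$ and a counting argument is needed to show it is a generator --- of type 3A when $k<n-4$, of type 3C when $k>n-4$, and of neither when $k=n-4$, which is precisely why $\HS_{n-4}$ fails to have linear quotients. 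Your sketch for the cone--cone block does not surface this pair, and your plan for the $Q$-supported block pushes it into an unproved sub-result; neither says where the argument must break at $k=n-4$, which any correct case analysis has to exhibit. Until that analysis is carried out, part (i) is not established.
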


We first provide a perfect elimination ordering for $\CH_{n,r}$ and $\ACH_{n,r}$. Due to their similarity, it should come as no surprise that the natural ordering works for both.

\begin{lemma}\label{lem:elimination-ordering-CH-TH}
    For each $n\geq 6$ and $r\geq 0$, the ordering $x_1> x_2> \cdots > x_{n+r-1} >x_{n+r}$ is a perfect elimination ordering for both $\CH_{n,r}$ and $\ACH_{n,r}$. In particular, $\CH_{n,r}$ and $\ACH_{n,r}$ are  chordal graphs.
\end{lemma}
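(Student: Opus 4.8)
The plan is to mimic, essentially verbatim, the proof of Lemma~\ref{lem:elm-order-H-and-LH}: for each index $i$, let $G_i$ be the induced subgraph of $\CH_{n,r}$ (resp.\ $\ACH_{n,r}$) on $\{x_i,x_{i+1},\dots,x_{n+r}\}$, compute $N_{G_i}(x_i)$ explicitly from the edge set, and check that the induced subgraph on $N_{G_i}(x_i)$ is complete. Since $\CH_{n,r}$ and $\ACH_{n,r}$ differ only in the $r$ edges $\{x_ix_{r+n-2}\colon i\in[r]\}$, I would run the two computations in parallel and flag the single place where they diverge.

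\emph{Case $i\geq r+1$.} Writing $i=r+\ell$ with $\ell\in[n]$, the vertex set of $G_i$ is $\{x_{r+\ell},\dots,x_{r+n}\}$, which contains none of $x_1,\dots,x_r$ and is precisely the vertex set of the induced copy of $\HH_n$ restricted to $\{x_{r+\ell},\dots,x_{r+n}\}$. Hence $N_{G_i}(x_i)$ agrees with the neighborhood computed inside $\HH_n$, and this case is already settled by Lemma~\ref{lem:elm-order-H-and-LH} after shifting all indices by $r$: $N_{G_i}(x_i)$ is one of $\{x_{r+\ell+1}\}\cup\{x_{r+n-1},x_{r+n}\}$, $\{x_{r+n-2},x_{r+n-1},x_{r+n}\}$, $\{x_{r+n-2},x_{r+n-1}\}$, or an interval $\{x_{i+1},\dots,x_{r+n}\}$, each of which induces a complete subgraph.

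\emph{Case $i\in[r]$.} Reading off the edge sets, $N_{G_i}(x_i)=\{x_{i+1},\dots,x_r\}\cup\{x_{r+n-1},x_{r+n}\}$ for $\CH_{n,r}$, and $N_{G_i}(x_i)=\{x_{i+1},\dots,x_r\}\cup\{x_{r+n-2},x_{r+n-1},x_{r+n}\}$ for $\ACH_{n,r}$. To see this is a clique: the subset $\{x_{i+1},\dots,x_r\}$ is a clique because $[r]$ induces a complete graph in both; every $x_j$ with $j\in[r]$ is adjacent to $x_{r+n-1}$ and $x_{r+n}$ (and, in $\ACH_{n,r}$, also to $x_{r+n-2}$) by construction; and the vertices $x_{r+n-2},x_{r+n-1},x_{r+n}$ are pairwise adjacent inside $\HH_n$, since $x_{r+n-1}x_{r+n}$, $x_{r+n-2}x_{r+n-1}$, and $x_{r+n-2}x_{r+n}$ all lie in $E(\HH_n)$ by its defining edge set. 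Putting the two cases together shows $x_1>\cdots>x_{n+r}$ is a perfect elimination ordering, so both graphs are chordal. The argument is pure bookkeeping; I expect no real obstacle beyond recording the $i\in[r]$ neighborhoods correctly for each graph and confirming that the three top $\HH_n$-vertices are mutually adjacent.
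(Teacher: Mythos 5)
Your proposal is correct and follows essentially the same route as the paper's proof: split on $i\in[r]$ versus $i>r$, compute $N_{G_i}(x_i)$ explicitly in the first case, and reduce the second case to the perfect elimination ordering of $\HH_n$ from Lemma~\ref{lem:elm-order-H-and-LH}. The only difference is that you spell out the clique verification for $N_{G_i}(x_i)$ when $i\in[r]$ (including the mutual adjacency of $x_{r+n-2},x_{r+n-1},x_{r+n}$ in $\HH_n$), which the paper leaves as ``by definition.''
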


\begin{proof}
    The proofs are similar for the two graphs. Let $G$ be either $\CH_{n,r}$ or $\ACH_{n,r}$, and $G_i$ be the induced subgraph of $G$ on the vertices $\{x_i, x_{i+1},\dots, x_{n+r}\}$. By definition, it suffices to show that $N_{G_i}(x_i)$ induces a complete subgraph of $G$. 
    
    Indeed, if $i\in [r]$, then one can verify that 
    \[
    N_{G_i}(x_i)= \begin{cases}
        \{x_{i+1}, x_{i+2}, \cdots, x_{r}\} \cup \{x_{r+n-1},x_{r+n}\} &\text{if $G=\CH_{n,r}$,}\\
        \{x_{i+1}, x_{i+2}, \cdots, x_{r}\} \cup \{x_{r+n-2},x_{r+n-1},x_{r+n}\} &\text{if $G=\ACH_{n,r}$},
    \end{cases}
    \]
    induces a complete subgraph of $G$ by definition. On the other hand, if $i>r$, then it suffices to assume that $G$ is now the induced subgraph of  with the vertex set $\{x_{r+i}\colon i\in [n]\}$, i.e., the graph $\HH_n$. The result then follows from Lemma~\ref{lem:elm-order-H-and-LH}, as desired.
\end{proof}

For the rest of the section, we fix $2\leq k\leq r+n-4$, and inspect the ideals $\HS_k(\CH_{n,r}^c) $  and $\HS_k(\ACH_{n,r}^c)$. First we introduce some notation. For each monomial $h=x_{i_1}x_{i_2}\cdots x_{i_{k+2}}$ where $1\leq i_1<i_2<\cdots <i_{k+2}\leq r+n$, set
\begin{align*}
    Q_h&\coloneqq \supp(h) \cap \{r+j \colon j\in [n] \},\\
    \max(h) &\coloneqq \max \supp(h) = i_{k+2},\\
    \secondmax(h) &\coloneqq \max(h/x_{\max(h)}) = i_{k+1}.
\end{align*}
Just like the minimal monomial generators of $\HS_k(\HH_n^c)$ in Proposition~\ref{prop:gens-H}, those of $\HS_k(\CH_{n,r}^c) $  and $\HS_k(\ACH_{n,r}^c)$ also follow certain rules. As the two graphs are similar, the same applies to these generators. Therefore, we define the following types as before.

\begin{definition}
    A monomial $h$ is said to be one of the following \emph{type} if  it satisfies the corresponding condition:
    \begin{enumerate}
        \item[(1)] $\max(h)\in [r+1,\ r+n-4]$ and either $\supp(h) \cap [r] \neq \emptyset$ or $[\min Q_h,\  \max(h)]\setminus Q_h \neq \emptyset$;
        \item[(2)] $\max(h)=r+n-3$;
        \item[(3A)] $\max(h)=r+n-2$ and $\secondmax(h)\in \{r+n-3,r+n-4\}$ and $ [\min Q_h, \secondmax(h)] \setminus Q_h \neq \emptyset$;
        \item[(3B)] $\max(h)=r+n-2$ and  $\secondmax(h) \in [r+1,r+n-5]$;
        \item[(3C)] $\max(h)=r+n-2$ and $\supp(h) \cap [r] \neq \emptyset$;
        \item[(4)] $\max(h)=r+n-1$, and we have $r+1\in \supp(h)$, and $r+2\notin \supp(h)$;
        \item[(5)] $\max(h)=r+n$ and $\secondmax(h)=r+n-3$.
    \end{enumerate}
\end{definition}

We remark that these types are mutually exclusive, the types (except for 3C) resemble the conditions in Proposition~\ref{prop:gens-H}, and if $h$ is of one of the types, then $Q_h\neq \emptyset$. Now we are ready to characterize the minimal monomial generators of $\HS_k(\ACH_{n,r}^c) $  and $\HS_k(\CH_{n,r}^c)$.

\begin{proposition}\label{prop:gens-TH}
    For $k\geq 2$, the ideal $\HS_k(\ACH_{n,r}^c) $ is minimally generated by monomials~$h$, where $\left|\supp(h)\right|=k+2$ and $h$ is of type 1, 2, 3A, 3B, 4, or 5.
\end{proposition}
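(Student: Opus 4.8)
The plan is to mimic the proof of Proposition~\ref{prop:gens-H} but now working inside $\ACH_{n,r}$ with the perfect elimination ordering $x_1 > x_2 > \cdots > x_{r+n}$ supplied by Lemma~\ref{lem:elimination-ordering-CH-TH}, so that Lemma~\ref{lem:gens-HS} applies verbatim. Thus I must show: a monomial $h = x_{i_1}\cdots x_{i_{k+2}}$ with $1 \le i_1 < \cdots < i_{k+2} \le r+n$ satisfies the condition ``there exists $t < k+2$ with $x_{i_t}x_{i_l} \notin E(\ACH_{n,r})$ for all $t < l \le k+2$'' if and only if $h$ is of one of the types 1, 2, 3A, 3B, 4, 5. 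The underlying structural facts I would assemble first are the neighborhood descriptions in $\ACH_{n,r}$: for $i \in [r]$ we have $N_{\ACH_{n,r}}(x_i) = \{x_j : j \in [r],\, j \ne i\} \cup \{x_{r+n-2},x_{r+n-1},x_{r+n}\}$; and for the vertices $x_{r+j}$ the neighborhoods are exactly those of $\HH_n$ (with indices shifted by $r$) as computed inside Proposition~\ref{prop:gens-H}, since the induced subgraph on $\{x_{r+j} : j \in [n]\}$ is $\HH_n$. A key reduction: if $\max(h) \le r$, then $\supp(h) \subseteq [r]$, which induces a clique in $\ACH_{n,r}$, so no valid $t$ exists and $h$ is not a generator (and indeed not of any type, since every type forces $Q_h \ne \emptyset$); hence I may assume $\max(h) = r + m$ for some $m \in [n]$, i.e.\ $Q_h \ne \emptyset$.

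For the forward direction (each type gives a generator) I would treat the cases exactly as in Proposition~\ref{prop:gens-H}, with one genuinely new subcase. Types 2, 3A, 3B, 4, 5 are handled by the same choices of $i_t$ as in the $\HH_n$ proof — e.g.\ $i_t = \secondmax(h)$ for types 2 and 5, $i_t = r+1$ for type 4 — because the added vertices $x_1,\dots,x_r$ are non-adjacent to $x_{r+j}$ for $j \in [n-3]$, so they never become neighbors of the chosen pivot; one only has to double-check, in type 4 say, that $x_1,\dots,x_r \notin N_{\ACH_{n,r}}(x_{r+1})$, which holds since $1 \in [n-3]$ when $n \ge 6$. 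For type 1, the new feature is the clause $\supp(h) \cap [r] \ne \emptyset$: if some $i_q \le r$ and $\max(h) = r + m \le r+n-4$, then $x_{i_q}$ is adjacent in $\ACH_{n,r}$ only to other vertices of $[r]$ and to $x_{r+n-2},x_{r+n-1},x_{r+n}$, none of which lie in $\supp(h)$ beyond $i_q$ (all later indices are in $[i_q{+}1, r+n-4]$), so $i_t = i_q$ works; the alternative clause $[\min Q_h,\max(h)] \setminus Q_h \ne \emptyset$ is handled by the path-graph argument of Proposition~\ref{prop:gens-H} applied to the $x_{r+j}$'s. I would also note $i_t = \max(\supp(h)\cap [r])$ is a clean uniform choice in the first clause. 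Type 3C is brand new: $\max(h) = r+n-2$ and $\supp(h)\cap[r] \ne \emptyset$; here $x_{r+n-2}$ is adjacent to every $x_i$ with $i \in [r]$ in $\ACH_{n,r}$ — wait, it is \emph{not}, since in $\ACH_{n,r}^c$ we have $x_i x_{r+n-2} \in E$ — so actually $x_i$ and $x_{r+n-2}$ are non-adjacent in $\ACH_{n,r}$; but in $\CH_{n,r}$ they \emph{are} adjacent. This is precisely why type 3C appears for $\CH_{n,r}^c$ but must be \emph{excluded} for $\ACH_{n,r}^c$: in $\ACH_{n,r}$, picking $i_t = \max(\supp(h)\cap[r])$ does \emph{not} give a pivot whose later-neighbors avoid $\supp(h)$, because $x_{r+n-2} = x_{\max(h)}$ is non-adjacent to it — so the condition is automatically satisfied and such $h$ is a generator, but it already falls under type 3C...

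Let me restate the converse direction cleanly, which is where the case split must be airtight. Suppose $h$ is a minimal generator, so some valid pivot $x_{i_t}$ exists; I split on $\max(h) = r+m$. If $m \le n-4$: from $x_{i_t}x_{i_{t+1}} \notin E$, if $i_t \le r$ then $i_t \in \supp(h) \cap [r] \ne \emptyset$ (type 1, first clause); if $i_t > r$ the path argument gives a gap in $[\min Q_h, \max(h)]$ (type 1, second clause) — here I use that $\max(h) \le r+n-4$ forces $i_t + 1 \le r+n-4$ so $x_{i_t}$ lies on the ``path part''. If $m = n-3$: automatically type 2. If $m = n-2$: since $N(x_{r+n-2}) \cap \supp(h)$ must miss $x_{i_t}$, and in $\ACH_{n,r}$ we have $N_{\ACH_{n,r}}(x_{r+n-2}) = \{x_{r+n-3}, x_{r+n-4}\}$ (the cone vertices $x_i$, $i\le r$, are \emph{non}-adjacent here, unlike in $\CH_{n,r}$), we get $i_t \le r+n-5$ or $i_t \le r$, and then the $\HH_n$-style analysis of $\secondmax(h)$ yields type 3A or 3B — \emph{and crucially} the case $\supp(h)\cap[r] \ne \emptyset$ does not produce a separate type because a cone vertex is already a non-neighbor of $x_{r+n-2}$, hence any such $h$ is a generator via whichever of 3A/3B/... its $x_{r+j}$-part satisfies; I must verify this does not leave out generators, i.e.\ that $\max(h) = r+n-2$ with $\supp(h)\cap[r]\ne\emptyset$ but $h$ of \emph{none} of types 3A, 3B is impossible — this forces $\secondmax(h) \in \{r+n-3, r+n-4\}$ with $[\min Q_h, \secondmax(h)] \subseteq Q_h$, and then one checks directly whether a valid pivot exists. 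If $m = n-1$: $N(x_{r+n-1})$ in $\ACH_{n,r}$ is all of $\{x_{r+j}: j \in [n-2]\} \cup \{x_{r+n}\}$ (the cone vertices $x_1,\dots,x_r$ are non-adjacent to $x_{r+n-1}$ in $\ACH_{n,r}$), wait — no, $x_i x_{r+n-1} \in E(\ACH_{n,r})$ since $n-1 \in \{n-2,n-1,n\}$, so they \emph{are} adjacent; hence the only possible pivot is $i_t = r+1$ with $r+1 \in N$-complement, forcing $r+2 \notin \supp(h)$, giving type 4; I also need $\supp(h)\cap[r] = \emptyset$ to actually follow, or else absorb it — this requires care. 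If $m = n$: $N(x_{r+n}) = $ everything except $x_{r+n-3}$, so $i_t = r+n-3$ and then $t = k+1$, i.e.\ type 5. Finally I would remark that types are mutually exclusive (clear from the $\max(h)$ values and the sub-conditions) so the generating set is exactly as claimed. \textbf{The main obstacle} is the $\max(h) = r+n-2$ and $\max(h) = r+n-1$ cases: one must pin down the neighborhoods in $\ACH_{n,r}$ (as opposed to $\CH_{n,r}$, where they differ precisely at $x_{r+n-2}$) and verify that the presence or absence of cone vertices in $\supp(h)$ is correctly reflected by types 3A/3B (and \emph{not} 3C), which is the whole point distinguishing this proposition from its $\CH$ analogue.
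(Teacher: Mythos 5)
Your overall strategy matches the paper's: apply Lemma~\ref{lem:gens-HS} via the perfect elimination ordering of Lemma~\ref{lem:elimination-ordering-CH-TH}, observe that the induced subgraph on $\{x_{r+j}\colon j\in[n]\}$ is $\HH_n$ so that Proposition~\ref{prop:gens-H} handles monomials (and, more generally, pivots) supported away from $[r]$, and then analyze the cases where a cone vertex is involved. However, the argument breaks down exactly at the point you yourself identify as ``the whole point distinguishing this proposition from its $\CH$ analogue,'' because you state the key adjacency backwards. From the definitions, $E(\ACH_{n,r})=E(\CH_{n,r})\cup\{x_ix_{r+n-2}\colon i\in[r]\}$, equivalently $x_ix_{r+n-2}\notin E(\ACH_{n,r}^c)$ for $i\in[r]$: in $\ACH_{n,r}$ the cone vertices \emph{are} adjacent to $x_{r+n-2}$ (consistent with your own neighborhood computation $N_{\ACH_{n,r}}(x_i)\supseteq\{x_{r+n-2},x_{r+n-1},x_{r+n}\}$ at the start of the proposal), and it is in $\CH_{n,r}$ that they are not. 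You assert the opposite in the type-3C discussion and again in the $\max(h)=r+n-2$ case of the converse, where you take $N_{\ACH_{n,r}}(x_{r+n-2})=\{x_{r+n-3},x_{r+n-4}\}$ and conclude that a cone vertex is ``already a non-neighbor of $x_{r+n-2}$.'' With that (false) adjacency, every $h$ with $\max(h)=r+n-2$ and $\supp(h)\cap[r]\neq\emptyset$ would admit the pivot $i_t=\max(\supp(h)\cap[r])$ and hence be a generator, i.e., type 3C would have to appear in the list for $\ACH_{n,r}^c$ --- contradicting the very statement being proved. The correct adjacency is what forces any valid pivot for $\max(h)\in\{r+n-2,\,r+n-1,\,r+n\}$ to satisfy $i_t>r$, whence the $\HH_n$-analysis yields exactly types 3A/3B (resp.\ 4, 5) and excludes 3C; this is how the paper disposes of these three cases in one stroke.

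Beyond the reversed adjacency, the write-up leaves the decisive subcase unresolved: you explicitly defer the verification that $\max(h)=r+n-2$ with $\supp(h)\cap[r]\neq\emptyset$ but $h$ of neither type 3A nor 3B admits no valid pivot (``one checks directly whether a valid pivot exists''), and the $m=n-1$ case ends with ``this requires care.'' Once the adjacency is corrected both gaps close immediately --- a cone pivot is impossible since $x_ix_{\max(h)}\in E(\ACH_{n,r})$ for $i\in[r]$ and $\max(h)\geq r+n-2$, and a pivot $i_t>r$ reduces to Proposition~\ref{prop:gens-H} --- but as written the proposal does not establish the exclusion of type 3C, which is the one nontrivial point of the proposition.
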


\begin{proof}
    Since $\ACH_{n,r}$ is chordal (Lemma~\ref{lem:elimination-ordering-CH-TH}),  the ideal $\HS_k(\ACH_{n,r}^c) $ is minimally generated by monomials of degree $k+2$. Now let $h=x_{i_1}x_{i_2}\cdots x_{i_{k+2}}$ where $1\leq i_1<i_2<\cdots < i_{k+2}\leq r+n$. Then clearly  $i_{k+2}=\max(h)$ and  $i_{k+1}= \secondmax(h)$. 

    By Lemma~\ref{lem:gens-HS}, it suffices to show that $h$ is of type 1, 2, 3A, 3B, 4, or 5 if and only if there exists $t<k+2$ such that $x_{i_t}x_{i_l}\notin E(\ACH_{n,r})$ for all $t<l\leq k+2$. 
    We note that  the induced subgraph of $\ACH_{n,r}$ with the vertex set $\{x_{r+i}\colon i\in [n]\}$ is exactly $\HH_n$, and $Q_h$ is precisely the set of vertices of $\HH_n$ that divide $h$. 
    Therefore, if $i_{1}> r$, then $Q_h\neq \emptyset$ vacuously and thus the result follows from Proposition~\ref{prop:gens-H}. Now we can assume that $i_{1}\leq r$, i.e., $\supp(h)\cap [r]\neq \emptyset$.

    Suppose that $h$ is of type 1, 2, 3A, 3B, 4, or 5. We have the following cases:
    \begin{itemize}
        \item Suppose that $h$ is of type 1, i.e., $i_{k+2}\in [r+1,\ r+n-4]$. Let $t<k+2$ be the index such that 
        $i_t= \max \left(\supp(h)\cap [r]\right)$. 
        Then, if $j\in \supp(h)$ and $j>i_t$, we have $j>r$. 
        Hence, for any $t<l\leq k+2$, we have $j_l>r$, and thus, $x_{i_t}x_{i_l}\notin E(\ACH_{n,r})$, as desired. 
        \item Suppose that $h$ is of type 2, i.e., $i_{k+2}=r+n-3$. Since $x_ix_{r+n-3}\notin E(\ACH_{n,r})$ for any $i<r+n-3$, the index $t=k+1$ satisfies the condition $x_{i_t}x_{i_l}\notin E(\ACH_{n,r})$ for any $t<l\leq k+2$, as desired.
        \item Suppose that $h$ is of type 3A, 3B, 4, or 5. 
        Then $i_{k+2}\in \{r+n-2,\ r+n-1,\ r+n\}$, and, in particular, $x_ix_{i_{k+2}}\in E(\ACH_{n,r})$ for all $i\in [r]$. 
        Thus, if there exists $t<k+2$ such that 
        $x_{i_t}x_{i_l}\notin E(\ACH_{n,r})$ for all $t<l\leq k+2$, we must have $i_t>r$. 
        In other words, $x_{i_t}$ and any $x_{i_l}$ where $l>t$, are vertices of $\HH_n$. 
        The existence of the required $t$ then follows from the proof of Proposition~\ref{prop:gens-H}, as desired.
    \end{itemize}

    Conversely, suppose that there exists $t<k+2$ such that $x_{i_t}x_{i_l}\in E(\ACH^c_{n,r})$ for all $t<l\leq k+2$. 
    If $i_t>r$, then the indices $\{i_1,\dots, i_{t-1}\}$ are inconsequential. 
    Hence, from the proof of Proposition~\ref{prop:gens-H}, we conclude that $h$ is of type 1, 2, 3A, 3B, 4, or 5. 
    Thus, we can assume that $i_t\in [r]$. 
    Since $x_{i_t}x_{i_{k+2}}\notin E(\ACH_{n,r})$, 
    we have $i_{k+2}=r+j$ for some $j\in [n-3]$. 
    If $i_{k+2}=r+n-3$, then $h$ is of type 2, as desired. 
    On the other hand, if $i_{k+2}=r+j$ for some $j\in [n-4]$, then $h$ is of type 1, as desired.
\end{proof}

\begin{proposition}\label{prop:gens-CH}
    For any $k\geq 2$, the ideal $\HS_k(\CH_{n,r}^c) $ is minimally generated by monomials~$h$, where $\left|\supp(h)\right|=k+2$,  $Q_h\neq \emptyset$, and $h$ is of type 1, 2, 3A, 3B, 3C, 4, or 5.
\end{proposition}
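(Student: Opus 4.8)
The plan is to adapt the proof of Proposition~\ref{prop:gens-TH} almost verbatim, the only genuinely new ingredient being the analysis of monomials $h$ with $\max(h)=r+n-2$. The key structural observation is that $\CH_{n,r}$ and $\ACH_{n,r}$ differ only in the edge set $\{x_ix_{r+n-2}\colon i\in[r]\}$, which consists of edges incident to the single vertex $x_{r+n-2}$; equivalently, $N_{\CH_{n,r}}(v)=N_{\ACH_{n,r}}(v)$ for every vertex $v\notin[r]\cup\{x_{r+n-2}\}$. As in Proposition~\ref{prop:gens-TH}, I would begin by using Lemma~\ref{lem:elimination-ordering-CH-TH} and Lemma~\ref{lem:gens-HS} to reduce to the statement that a monomial $h=x_{i_1}\cdots x_{i_{k+2}}$ with $1\le i_1<\cdots<i_{k+2}\le r+n$ is a minimal generator of $\HS_k(\CH_{n,r}^c)$ iff there is a \emph{witness} index $t<k+2$ with $x_{i_t}x_{i_l}\notin E(\CH_{n,r})$ for all $l\in(t,k+2]$. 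The case $Q_h=\emptyset$ is immediate: then $\supp(h)\subseteq[r]$ induces a clique, so no witness exists; this shows every minimal generator has $Q_h\neq\emptyset$, as asserted.

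I would then handle all $h$ with $\max(h)\neq r+n-2$ by proving that for such $h$, being a minimal generator of $\HS_k(\CH_{n,r}^c)$ is equivalent to being a minimal generator of $\HS_k(\ACH_{n,r}^c)$. Since $E(\CH_{n,r})\subseteq E(\ACH_{n,r})$, the implication from $\ACH_{n,r}$ to $\CH_{n,r}$ is free; for the converse, if $r+n-2\notin\supp(h)$ the induced subgraphs on $\supp(h)$ coincide, while if $\max(h)\in\{r+n-1,r+n\}$ one imports from the proof of Proposition~\ref{prop:gens-H} that $x_{r+n-1}$ has unique non-neighbor $x_{r+1}$ among $x_1,\dots,x_{r+n-2}$ and $x_{r+n}$ has unique non-neighbor $x_{r+n-3}$ among $x_1,\dots,x_{r+n-1}$ in either graph, so a witness is forced to sit outside $[r]$, where the two graphs agree. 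Invoking Proposition~\ref{prop:gens-TH} then yields exactly the types $1,2,4,5$ (the only ones compatible with $\max(h)\neq r+n-2$).

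The new case $\max(h)=r+n-2$ is where I expect the only real work. Here $N_{\CH_{n,r}}(x_{r+n-2})=\{x_{r+n-4},x_{r+n-3},x_{r+n-1},x_{r+n}\}$, since its edges to $[r]$ are now absent, and as $x_{r+n-1},x_{r+n}\notin\supp(h)$ a witness must avoid $r+n-4$ and $r+n-3$. If $\supp(h)\cap[r]\neq\emptyset$, take $i_t\coloneqq\max(\supp(h)\cap[r])$: every larger element of $\supp(h)$ lies in $(r,r+n-2]$ and hence is a non-neighbor of $x_{i_t}$ in $\CH_{n,r}$, whose neighbors are only $[r]\setminus\{i_t\}$ together with $x_{r+n-1},x_{r+n}$; so $h$ is a generator, and it is precisely of type 3C. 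If $\supp(h)\cap[r]=\emptyset$, then $\supp(h)\subseteq V(\HH_n)$, $Q_h=\supp(h)$, the induced subgraph of $\CH_{n,r}$ on $\supp(h)$ equals that of $\HH_n$, and Proposition~\ref{prop:gens-H} (indices shifted by $r$, the relevant case being type~III) shows $h$ is a generator iff it is of type 3A or 3B. For the reverse inclusion, type-3C monomials are generators by the first subcase, a type-3A or type-3B monomial is a generator by the second subcase when it misses $[r]$ and by the first subcase when it meets $[r]$ (being then also of type 3C), and $r+n-2\in Q_h$ forces $Q_h\neq\emptyset$.

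Assembling these cases over the possible values of $\max(h)$, which is at least $r+1$ once $Q_h\neq\emptyset$, identifies the minimal generators of $\HS_k(\CH_{n,r}^c)$ with the degree-$(k+2)$ monomials $h$ with $Q_h\neq\emptyset$ that are of type 1, 2, 3A, 3B, 3C, 4, or 5. The main obstacle is the bookkeeping in the $\max(h)=r+n-2$ step: confirming that deleting the edges $x_ix_{r+n-2}$ produces exactly the type-3C generators while leaving the equivalences for the other values of $\max(h)$ intact, and that any overlap between types 3A/3B and 3C causes no harm.
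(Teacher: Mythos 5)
Your proposal is correct and follows essentially the same route as the paper: reduce via Lemma~\ref{lem:gens-HS} and Lemma~\ref{lem:elimination-ordering-CH-TH} to the existence of a witness index, import Proposition~\ref{prop:gens-TH} for the cases $\max(h)<r+n-2$ and $\max(h)\in\{r+n-1,r+n\}$ (where the witness is forced outside $[r]$), and treat $\max(h)=r+n-2$ separately, using $\max(\supp(h)\cap[r])$ as the witness for type 3C and the reduction to $\HH_n$ for types 3A and 3B. Your explicit dismissal of the case $Q_h=\emptyset$ and the remark about the harmless overlap of types 3A/3B with 3C are minor refinements of the same argument.
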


\begin{proof}
    Since $\CH_{n,r}$ is chordal (Lemma~\ref{lem:elimination-ordering-CH-TH}),  the ideal $\HS_k(\CH_{n,r}^c) $ is minimally generated by monomials of degree $k+2$. Now let $h=x_{i_1}x_{i_2}\cdots x_{i_{k+2}}$ where $1\leq i_1<i_2<\cdots < i_{k+2}\leq r+n$. Then clearly  $i_{k+2}=\max(h)$ and  $i_{k+1}= \secondmax(h)$.
    
    By Lemma~\ref{lem:gens-HS}, it suffices to show that $h$ is of type 1, 2, 3A, 3B, 3C, 4, or 5 if and only if there exists $t<k+2$ such that $x_{i_t}x_{i_l}\notin E(\ACH_{n,r})$ for any $t<l\leq k+2$. We note that even the perfect elimination orderings for the two graphs $\CH_{n,r}$ and $\ACH_{n,r}$ are the same from Lemma~\ref{lem:elimination-ordering-CH-TH};
    the only difference is their edge sets: $E(\CH_{n,r}^c) = E(\ACH_{n,r}^c) \cup \{x_i x_{r+n-2} \colon i\in [r] \}$. 
    Thus, the result follows immediately from Proposition~\ref{prop:gens-TH}, if $\max(h)< r+n-2$. On the other hand, if $i_{k+2}=\max(h)\in \{ r+n-1,\ r+n\}$, then $i_t\notin [r]$ since $x_ix_{i_{k+2}} \notin E(\CH_{n,r}^c)$. In particular, this means that the existence of the extra edges $ \{x_i x_{r+n-2} \colon i\in [r] \}$ does not affect 
    the existence of $t<k+2$ such that $x_{i_t}x_{i_l}\notin E(\ACH_{n,r})$ for any $t<l\leq k+2$. The result then follows immediately from Proposition~\ref{prop:gens-TH}. Therefore, we now assume that $\max(h)\geq r+n-2$.
    
    Just as with the graph $\ACH_{n,r}$, we note that  the induced subgraph of $\CH_{n,r}$ with the vertex set $\{x_{r+i}\colon i\in [n]\}$ is exactly $\HH_n$, and $Q_h$ is precisely the set of vertices of $\HH_n$ that divide $h$. Therefore, if $i_{1}> r$ then $Q_h=\supp(h)$, and thus from the proof of Proposition~\ref{prop:gens-H}, we conclude that $h$ is of type 3A or 3B, as desired. Now we can assume that $i_{1}\leq r$, i.e., $\supp(h)\cap [r]\neq \emptyset$. This means that $h$ is of type 3C. Therefore, it now suffices to show that there exists $t<k+2$ such that $x_{i_t}x_{i_l}\notin E(\CH_{n,r})$ for any $t<l\leq k+2$. Indeed, set $t$ to be the index such that $i_t\coloneqq \max \left( \supp(h)\cap [r] \right)$. Then $x_{i_t}x_{r+j}\notin E(\CH_{n,r})$ for any $j\in [n-2]$ by definition, and thus the result follows.
\end{proof}

We next determine when $\HS_k(\CH_{n,r}^c)$ or $\HS_k(\ACH_{n,r}^c)$ is nonzero.

\begin{proposition}\label{prop:proj-CH-TH}
    For any $n\geq 6$ and $r\geq 0$, the ideal $\HS_k(\CH_{n,r}^c)$ is nonzero if and only if $\HS_k(\ACH_{n,r}^c)$ is nonzero if and only if $k\leq r+n-4$.
\end{proposition}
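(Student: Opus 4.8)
The plan is to prove the two equivalences $\HS_k(\ACH_{n,r}^c)\neq 0\iff k\le r+n-4$ and $\HS_k(\CH_{n,r}^c)\neq 0\iff k\le r+n-4$ separately; together they give the asserted three‑way equivalence. Recall that for any monomial ideal $I$ of $S$ the nonzero total Betti numbers occupy the interval $\{0,1,\dots,\pd_S I\}$, so $\HS_k(I)\neq 0$ holds precisely when $0\le k\le\pd_S I$. Hence for each $G\in\{\ACH_{n,r},\CH_{n,r}\}$ it suffices to prove that $\HS_{r+n-4}(G^c)\neq 0$ and $\HS_{r+n-3}(G^c)=0$: the former gives $\pd_S I(G^c)\ge r+n-4$ and the latter gives $\pd_S I(G^c)\le r+n-4$, so $\pd_S I(G^c)=r+n-4$ and the equivalence follows (using that $k\ge 0$ always).

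For the non‑vanishing at $k=r+n-4$, I would exhibit a single monomial that serves both graphs. Set $h\coloneqq\frac{\prod_{i=1}^{r+n}x_i}{x_{r+n-2}x_{r+n-1}}$, so that $\supp(h)=\{1,\dots,r+n-3,r+n\}$. Then $|\supp(h)|=r+n-2=(r+n-4)+2$, $\max(h)=r+n$, and $\secondmax(h)=r+n-3$, so $h$ is of type 5, and $Q_h\neq\emptyset$ since $r+n\in Q_h$. By Proposition~\ref{prop:gens-TH} (respectively Proposition~\ref{prop:gens-CH}), $h$ is a minimal monomial generator of $\HS_{r+n-4}(\ACH_{n,r}^c)$ (respectively of $\HS_{r+n-4}(\CH_{n,r}^c)$), so both ideals are nonzero.

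For the vanishing at $k=r+n-3$, suppose toward a contradiction that $h$ is a minimal monomial generator of $\HS_{r+n-3}(G^c)$. Then $|\supp(h)|=r+n-1$, so $\supp(h)$ omits exactly one of the $r+n$ variables; in particular $\max(h)\ge r+n-1$. By Proposition~\ref{prop:gens-TH} or~\ref{prop:gens-CH}, $h$ is of one of the listed types, and since every type other than 4 and 5 forces $\max(h)\le r+n-2$, we must have $\max(h)\in\{r+n-1,r+n\}$. If $\max(h)=r+n$ then $h$ is of type 5, so $\secondmax(h)=r+n-3$, which forces both $r+n-1$ and $r+n-2$ to lie outside $\supp(h)$ --- two omissions, contradicting the previous sentence. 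If $\max(h)=r+n-1$ then $r+n\notin\supp(h)$, so, only one variable being omitted, $\supp(h)=\{1,\dots,r+n-1\}$; but this set contains $r+2$, contradicting the requirement in type 4 that $r+2\notin\supp(h)$. Hence $\HS_{r+n-3}(G^c)=0$, completing the proof.

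Everything here is bookkeeping once Propositions~\ref{prop:gens-TH} and~\ref{prop:gens-CH} are in hand; the only step that needs attention is the last one, where one must verify that the support‑size constraint $|\supp(h)|=r+n-1$ is incompatible with each admissible type. One could instead avoid the interval property of Betti numbers by directly producing a generator of $\HS_k(G^c)$ for every $2\le k\le r+n-4$ --- for instance a degree‑$(k+2)$ type‑2 monomial of the form $x_{r+n-3}\cdot w$ when $k\le r+n-5$, together with the monomial $h$ above when $k=r+n-4$ --- but routing through projective dimension is shorter and also dispatches the cases $k\in\{0,1\}$ for free.
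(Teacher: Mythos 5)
Your proposal is correct and follows essentially the same route as the paper: both arguments reduce to showing $\HS_{r+n-4}\neq 0$ and $\HS_{r+n-3}=0$ via the interval property of nonzero Betti numbers, exhibit the same type-5 monomial $\frac{\prod_{i=1}^{r+n}x_i}{x_{r+n-2}x_{r+n-1}}$ for the non-vanishing, and rule out types 4 and 5 at $k=r+n-3$ by the same support-size count. The only cosmetic differences are that you spell out the interval property and treat both graphs explicitly, whereas the paper argues for $\CH_{n,r}^c$ and notes the $\ACH_{n,r}^c$ case is analogous.
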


\begin{proof}
    We will prove the statement for $\CH_{n,r}^c$, and remark that similar arguments follow for $\ACH_{n,r}^c$. It suffices to show that $\HS_{n+r-4}(\CH_{n,r}^c)\neq 0$ and $\HS_{n+r-3}(\CH_{n,r}^c)= 0$. Indeed, the monomial $\frac{\prod_{i=1}^{r+n}x_j}{x_{r+n-2}x_{r+n-1}}$ is of type 5, and thus is a generator of $\HS_{n+r-4}(\CH_{n,r}^c)$ by Proposition~\ref{prop:gens-CH}. In particular, this implies that $\HS_{n+r-4}(\CH_{n,r}^c)\neq 0$.

    Now, for the sake of contradiction, assume that 
    $\HS_{n+r-3}(\CH_{n,r}^c) \neq 0$, and let $h$ be a minimal monomial generator.
    Then $\left|\supp(h)\right|=r+n-1$, and thus, $h$ is of either type 4 or 5 by Proposition~\ref{prop:gens-CH}. 
    This results in a contradiction as the support of a monomial of either type is at most $r+n-2$ by definition. 
\end{proof}

We consider the ordering $x_1>x_2>\cdots>x_{r+n-1}>x_{r+n}$ and use $>_{\lex}$ to denote the total ordering on monomials using the corresponding lex ordering. We will show that $\HS_k(\CH_{n,r}^c)$ and $\HS_k(\ACH_{n,r}^c)$ have linear quotients (with appropriate $k$) with respect to this lex ordering. By Lemma~\ref{lem:LQ-lex}, we need to show that for any two minimal monomial generators $f,g$ such that $g>_{\lex} f$ and $\left|\supp(g)\setminus \supp(f)\right|\geq 2$, there exists $i\in \supp(f)$ and $j\in \supp(g)\setminus \supp (f)$ such that and $i>j$ and $x_j\frac{f}{x_i} \in \mingens(I)$.

Set
\begin{align*}
    f&=x_{i_1}x_{i_2}\cdots x_{i_{k+2}},\\
    g&=x_{j_1}x_{j_2}\cdots x_{j_{k+2}}, 
\end{align*}
where $1\leq i_1<i_2<\cdots <i_{k+2}\leq n+r$ and $1\leq j_1<j_2<\cdots <j_{k+2}\leq n+r$. Let $s\in [k+2]$ be an index such that $j_1=i_1,\dots, j_{s-1}=i_{s-1},$ and $j_{s}<i_{s}$. Such an $s$ exists since $g>_{\lex} f$, and in particular we have $j_1\leq i_1$. Since $\left|\supp(g)\setminus \supp(f)\right|\geq 2$, we have $s\neq k+2$. We will prove the existence of $i$ and $j$ when we know the types of $f$ and $g$.

\begin{lemma}\label{lem:f-type-1}
    Assume that $f$ is of type 1. Then there exist $i,j\in [r+n]$ such that $i>j$, $j\in \supp(g)\setminus \supp(f)$, and $x_j\frac{f}{x_i}$ is of type 1.
\end{lemma}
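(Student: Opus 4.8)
We are given that $f = x_{i_1}\cdots x_{i_{k+2}}$ is of type 1, and $g = x_{j_1}\cdots x_{j_{k+2}}$ is another minimal generator with $g >_{\lex} f$ and $\lvert\supp(g)\setminus\supp(f)\rvert \geq 2$; we have the index $s < k+2$ with $j_\ell = i_\ell$ for $\ell < s$ and $j_s < i_s$. We must produce $i \in \supp(f)$, $j \in \supp(g)\setminus\supp(f)$ with $i > j$ and $x_j \frac{f}{x_i}$ again of type 1 (so in particular still a generator by Proposition~\ref{prop:gens-CH}). Recall that $f$ of type 1 means $\max(f) \le r+n-4$ and either $\supp(f)\cap[r]\neq\emptyset$ or $[\min Q_f, \max(f)]\setminus Q_f \neq \emptyset$.

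**Plan / key steps.** The natural first move is to try $f' \coloneqq x_{j_s}\frac{f}{x_{i_s}}$, exactly as in the proof of Proposition~\ref{prop:LH-LQ}: here $j = j_s \in \supp(g)\setminus\supp(f)$ (it is not in $\supp(f)$ because $j_s < i_s = \min\{i_\ell : \ell \ge s\}$) and $i = i_s$, with $j < i$. Since $j_s < i_s \le \max(f) \le r+n-4$ and $\max(f') = \max(f)$ unless $i_s = \max(f)$, the condition $\max(f') \le r+n-4$ is automatic. The one thing to check is that $f'$ still satisfies the "either...or" disjunction of type 1. I would split into cases: (a) if $\supp(f)\cap[r]\neq\emptyset$ and $i_s > r$, then decreasing index $i_s$ to $j_s$ only possibly enlarges $\supp(f')\cap[r]$, so $f'$ is still type 1; (b) if $i_s \le r$ (so we are moving a small index down within $[r]$, or $j_s < i_s \le r$), then $j_s \in [r]$ too, so $\supp(f')\cap[r]\neq\emptyset$ and we are done; (c) the remaining case is $\supp(f)\cap[r]=\emptyset$ (equivalently $i_1 > r$, so $Q_f = \supp(f)$) and the gap condition $[\min Q_f, \max(f)]\setminus Q_f \neq \emptyset$ holds. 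In case (c), replacing $x_{i_s}$ by $x_{j_s}$ with $j_s < i_s$: if $j_s > r$ as well, the support stays inside $\{r+1,\dots,r+n\}$, $\min Q_{f'} = \min(j_s, i_1) \le \min Q_f$ and $\max$ is unchanged or decreases; one checks the interval $[\min Q_{f'}, \max(f')]$ still omits some point of $\supp(f')$ — here I would argue directly that a gap persists (e.g. the point $i_s$ itself, if $i_s < \max(f')$, is now a gap; if $i_s = \max(f)$ one instead uses that the old gap in $[\min Q_f, \max(f)]$ below $i_s$ survives). If instead $j_s \le r$, we are back in case (b).

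**Where the difficulty lies.** The genuinely delicate point is case (c) when $j_s$ stays $> r$ and $i_s = \max(f)$: then both $\min Q$ can drop and $\max$ can drop, and one must be sure a gap still exists after the shift. I expect to handle this by a cleaner choice of the pair $(i,j)$ in that subcase — rather than insisting on $f' = x_{j_s}f/x_{i_s}$, pick $i$ to be the endpoint of an existing gap of $f$ (some $i_\ell \in \supp(f)$ with $i_\ell + 1 \notin \supp(f)$, $i_\ell+1 \le \max(f)$, which exists by the gap hypothesis) and $j$ a suitable element of $\supp(g)\setminus\supp(f)$; but to keep $j \in \supp(g)$ one really does want to exploit $j_s$. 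A safe uniform argument: since $\lvert \supp(g)\setminus\supp(f)\rvert \ge 2$, after using $j_s$ we still have flexibility, but for this lemma it suffices to verify $f' = x_{j_s} f / x_{i_s}$ works, and the only obstruction is the gap-preservation in case (c), which I will confirm by the explicit case split on whether $i_s = \max(f)$ or $i_s < \max(f)$ as sketched above. So the main obstacle is purely this bookkeeping of the interval/gap condition under the lex-shift; everything else (membership, degree, $i > j$, $j \notin \supp(f)$) is immediate from the choice $(i,j) = (i_s, j_s)$.
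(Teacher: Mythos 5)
Your proposal is correct and follows essentially the same route as the paper: take $f' = x_{j_s}\frac{f}{x_{i_s}}$, split on whether the support meets $[r]$, and otherwise use the removed index $i_s$ itself as the new gap in $[\min Q_{f'},\max(f')]$. The ``delicate case'' $i_s=\max(f)$ that you flag never occurs: since $\lvert \supp(g)\setminus\supp(f)\rvert\geq 2$ forces $s\leq k+1$, we have $i_s< i_{k+2}=\max(f)$, so $\max(f')=\max(f)$ and your case (c) argument with $i_s<\max(f')$ already closes the proof.
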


\begin{proof}
    Since $f$ is of type 1, we have $i_{k+2} \in [r+1,\ r+n-4]$ and either of the following holds:
    \begin{enumerate}[label=(\roman*)]
        \item $\supp(f) \cap [r] \neq \emptyset$;
        \item $[\min Q_f, \ i_{k+2}]\setminus Q_f$.
    \end{enumerate} 
    It suffices to show that $f'\coloneqq x_{j_s} \frac{f}{x_{i_s}}$ is of type 1. Indeed, we first remark that $\max(f')=\max(f)=i_{k+2}\in [r+1,\ r+n-4]$.
    
    If $i_1\in [r]$, then $j_1\in [r]$ as well since $j_1\leq i_1$. 
    Thus, $\supp(f') \cap [r]\neq \emptyset$, and hence $f'$ is of type 1, as desired.
    
    Now we can assume that $i_1>r$. 
    Then $\supp(f) \cap [r] = \emptyset$. 
    In other words, (i) does not hold, and hence (ii) does, i.e., 
    $Q_f\subsetneq [\min Q_f,\ i_{k+2}]$. 
    If $j_s\in [r]$, then $\supp(f') \cap [r] \neq \emptyset$, and hence $f'$ is of type 1, as desired. 
    On the other hand, if $j_s>r$, then by construction, we have $\min Q_{f'}\leq \min Q_f$. 
    Hence  $i_s\in [\min Q_{f'},\ i_{k+2}] \setminus Q_{f'}$, which implies that $[\min Q_{f'},\ \max Q_{f'}] \setminus Q_{f'} \neq \emptyset$. 
    Therefore, $f'$ is of type 1, as desired.
\end{proof}

\begin{lemma}\label{lem:f-type-2}
    Assume that $f$ is of type 2. Then, there exist $i,j\in [r+n]$ such that $i>j$, $j\in \supp(g)\setminus \supp(f)$, and $x_j\frac{f}{x_i}$ is of type 2.
\end{lemma}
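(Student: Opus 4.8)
The plan is to mimic the strategy already used in the type-1 case: produce the witnessing pair $(i,j)$ as an explicit ``one-step swap'' between the generators $f$ and $g$, and then verify that the resulting monomial is again of type 2. Recall that $f$ being of type 2 means precisely $\max(f)=i_{k+2}=r+n-3$, and that this is the \emph{only} constraint imposed by type 2 (there are no conditions on $\secondmax(f)$, on $Q_f$, or on $\supp(f)\cap[r]$). This makes type 2 the easiest type to preserve, since any monomial of the right degree whose largest support element is $r+n-3$ automatically lies in $\mingens\left(\HS_k(\ACH_{n,r}^c)\right)$ by Proposition~\ref{prop:gens-TH} (and likewise for $\CH_{n,r}^c$ by Proposition~\ref{prop:gens-CH}).

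The key steps, in order. First, I would recall the setup fixed just before the lemma: $g>_{\lex}f$ with $\left|\supp(g)\setminus\supp(f)\right|\geq 2$, and the index $s\in[k+1]$ with $j_1=i_1,\dots,j_{s-1}=i_{s-1}$ and $j_s<i_s$. Set $f'\coloneqq x_{j_s}\tfrac{f}{x_{i_s}}$; then $i\coloneqq i_s\in\supp(f)$, $j\coloneqq j_s$, and since $j_s<i_s$ we have $i>j$, and $j_s\in\supp(g)\setminus\supp(f)$ because $j_s<i_s\leq\cdots$ while $j_s\notin\{i_1,\dots,i_{s-1}\}$ (those equal $j_1,\dots,j_{s-1}<j_s$) and $j_s<i_s\leq i_{s+1}<\cdots$. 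Second, I observe that $\max(f')=\max(f)=r+n-3$: this holds because $i_s\leq i_{k+1}=\secondmax(f)<i_{k+2}=r+n-3$, so deleting $x_{i_s}$ does not touch the top of the support, and $j_s<i_s<r+n-3$, so inserting $x_{j_s}$ does not create a new maximum. Hence $f'$ is again of type 2. Third, I conclude via Proposition~\ref{prop:gens-TH} (resp.~Proposition~\ref{prop:gens-CH}) that $f'\in\mingens\left(\HS_k(\ACH_{n,r}^c)\right)$ (resp.~$\HS_k(\CH_{n,r}^c)$), which is exactly the condition required by Lemma~\ref{lem:LQ-lex}.

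I expect essentially no obstacle here: the only thing to be careful about is the degenerate possibility that $i_s=r+n-3$ itself, which cannot happen since $i_s<i_{k+2}=r+n-3$ as $s\leq k+1<k+2$; and the bookkeeping that $j_s$ genuinely lies outside $\supp(f)$, which follows from the chosen form of $s$ as noted above. Since type 2 imposes no condition beyond the value of $\max$, and the swap $f\mapsto f'$ manifestly preserves $\max$, the verification is immediate and uniform for both $\ACH_{n,r}^c$ and $\CH_{n,r}^c$.
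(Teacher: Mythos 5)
Your argument is correct and is essentially identical to the paper's proof: both set $f'\coloneqq x_{j_s}\tfrac{f}{x_{i_s}}$ and observe that $\max(f')=i_{k+2}=r+n-3$ because $s\neq k+2$, so $f'$ remains of type 2. The extra bookkeeping you include (that $j_s\in\supp(g)\setminus\supp(f)$ and $j_s<i_s$) is already part of the setup the paper fixes before the lemma, so nothing is missing.
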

\begin{proof}
    Since $f$ is of type $2$, we have $i_{k+2}=r+n-3$. 
    Setting $f'\coloneqq x_{j_s} \frac{f}{x_{i_s}}$, it is clear that $\max(f')=i_{k+2}=r+n-3$ since $s\neq k+2$. Therefore $f'$ is of type 2, as desired.
\end{proof}

\begin{lemma}\label{lem:f-type-3A-or-3B}
    Assume that $f$ is of type 3A or 3B and $g$ is of type 1, 2, 3A, 3B, 3C, 4, or 5. Then there exist $i,j\in [r+n]$ such that $i>j$, $j\in \supp(g)\setminus \supp(f)$, and $x_j\frac{f}{x_i}$ is of type 1, 2, 3A, or 3B.
\end{lemma}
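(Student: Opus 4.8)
The strategy is to hand Lemma~\ref{lem:LQ-lex} an explicit witness $x_j\frac{f}{x_i}$, and the starting observation is that $\max(f)=i_{k+2}=r+n-2$ because $f$ is of type 3A or 3B. Let $s$ be the first index where $f$ and $g$ disagree, so $j_s<i_s$, $j_s\notin\supp(f)$ (there is nothing of $\supp(f)$ strictly between $i_{s-1}$ and $i_s$, nor below $i_1$), and $s\neq k+2$ (this last point because $\lvert\supp(g)\setminus\supp(f)\rvert\geq 2$). The \emph{default} witness will be $i=i_s$, $j=j_s$, giving $f'\coloneqq x_{j_s}\frac{f}{x_{i_s}}$; since $i_{k+2}$ is not disturbed, $\max(f')=r+n-2$, so $f'$ can only be of type 3A, 3B, or 3C, and the task reduces to controlling $\secondmax(f')$, the gap clause of type 3A, and whether the swap creates a new cone vertex (which would land us in type 3C rather than 3A or 3B).

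A new cone vertex is created exactly when $j_s\leq r<i_s$; when that happens I would abandon the default and instead remove the top variable, taking $i=\max(f)=r+n-2$ and $j=j_s$ (legitimate since $j_s<r+n-2$). The resulting monomial $x_{j_s}\frac{f}{x_{r+n-2}}$ has maximum $\secondmax(f)=i_{k+1}\leq r+n-3$ and support meeting $[r]$, hence is of type 2 if $i_{k+1}=r+n-3$ and of type 1 if $i_{k+1}\leq r+n-4$; it is a minimal generator by Propositions~\ref{prop:gens-TH} and~\ref{prop:gens-CH}, so this case is done. (If $j_s$ and $i_s$ both lie in $[r]$ the default swap changes neither $Q_f$ nor $\secondmax(f)$ nor the gaps, so $f'$ simply keeps $f$'s type; from now on I therefore assume $j_s>r$, whence also $i_s>r$.)

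Now $f'=x_{j_s}\frac{f}{x_{i_s}}$ is cone-safe with maximum $r+n-2$. If $s\leq k$, then $\secondmax(f')=\secondmax(f)$: when $f$ is of type 3B this lies in $[r+1,r+n-5]$, so $f'$ is of type 3B; when $f$ is of type 3A, the removed index $i_s$ satisfies $\min Q_{f'}<i_s<\secondmax(f')$ and $i_s\notin Q_{f'}$, so $i_s$ witnesses the gap clause and $f'$ is of type 3A. If $s=k+1$, then $\secondmax(f')=j_{k+1}$, and $j_{k+1}<i_{k+1}\leq r+n-3$ forces $j_{k+1}\leq r+n-4$; if $j_{k+1}\leq r+n-5$ we land in type 3B, while if $j_{k+1}=r+n-4$ we must have $i_{k+1}=r+n-3$ and $f$ of type 3A, and the remaining job is to exhibit a gap of $f'$ inside $[\min Q_{f'},\,r+n-4]$.

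This last subcase is the main obstacle. Since $f$ is of type 3A with $\secondmax(f)=r+n-3\in Q_f$, it has a gap $p\in[\min Q_f,\,r+n-3]\setminus Q_f$ with $p\neq r+n-3$; if $p<r+n-4$ then $p$ remains a gap of $f'$ and $f'$ is of type 3A, so the only configuration left to kill is that $r+n-4$ is the unique gap of $f$, i.e.\ $Q_f=[\,\min Q_f,\,r+n-5\,]\cup\{r+n-3,r+n-2\}$. In that case $\{i_1,\dots,i_k\}\supseteq[\,\min Q_f,\,r+n-5\,]$, so $j_{k+1}=r+n-4$ completes an interval and $\supp(g)$ is that interval together with a single index $j_{k+2}>r+n-4$; since $j_{k+2}\notin\{r+n-3,r+n-2\}$, we get $j_{k+2}\in\{r+n-1,r+n\}$. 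But then the generator classification (Propositions~\ref{prop:gens-TH} and~\ref{prop:gens-CH}) forces $g$ to be of type 4 --- impossible, because $\supp(g)$ contains $r+1$ only if it also contains $r+2$ --- or of type 5 --- impossible, because $\secondmax(g)=r+n-4\neq r+n-3$. Hence this configuration never arises, and the argument is complete; as routine bookkeeping one checks that each monomial produced has support of size $k+2$, so that belonging to one of the named types makes it a minimal generator.
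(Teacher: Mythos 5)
Your proof is correct and follows essentially the same strategy as the paper's: the default swap $x_{j_s}\frac{f}{x_{i_s}}$, the fallback of removing $x_{r+n-2}$ when $j_s\in[r]$ would otherwise cause trouble, and the same case split on $s\leq k$ versus $s=k+1$ with the same witnesses for the gap clause of type 3A. The only genuine difference is the final subcase $j_{k+1}=r+n-4$, $i_{k+1}=r+n-3$: you rule out the problematic configuration by showing $g$ could then be of no admissible type (hence not a generator), whereas the paper argues directly that $g$ must be of type 4 and uses $r+1\in\supp(g)$, $r+2\notin\supp(g)$ to exhibit $r+2$ as the required gap of $f'$; both arguments are valid.
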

\begin{proof}
    Since $f$ is of type 3A or 3B, we have $i_{k+2}=r+ n - 2$ and either of the following holds:
        \begin{enumerate}[label=(\roman*)]
            \item $[\min Q_f,\ \secondmax(f)] = [\min Q_f, \ i_{k+1}]\setminus Q_f\neq \emptyset$;
            \item $i_{k+1}\in [r+1,\ r+n-5]$.
        \end{enumerate}
    If $j_s\in [r]$, then since $\max(x_{j_s} \frac{f}{x_{r+n-2}})\in [r+1,\ r+n-3]$ and $\supp(x_{j_s} \frac{f}{x_{r+n-2}})\cap [r]\supset \{j_s\}$, we have that $x_{j_s} \frac{f}{x_{r+n-2}}$ is of type 1 or 2. 
    Thus, $i=r+n-2$ and $j=j_s$ are what we need, as required. 
    Now we can assume that $j_s>r$. 
    Since $s<k+2$, we either have $s=k+1$, or $s<k+1$. 
    We will consider these two cases separately.

    Suppose that $s<k+1$. It suffices to show that $f'\coloneqq x_{j_s} \frac{f}{x_{i_s}}$ is of type 3A or 3B. Indeed, we have $\max(f')=\max(f)=r+n-2$ and $\secondmax(f')=\secondmax(f)=i_{k+1}$. Thus if (ii) holds, then $f'$ is of type 3B, as desired. Now we can assume that (i) holds, i.e., $i_{k+1}\in \{r+n-3,\ r+n-4\}$ and $ [\min Q_f, \ i_{k+1}]\setminus Q_f$. Since $j_s>r$, we have $\min Q_{f'}\leq \min Q_f$. Thus $i_s\in [\min Q_{f'}, \ i_{k+1}]\setminus Q_{f'}$, which implies that $[\min Q_{f'}, \ \secondmax(f')]\setminus Q_{f'} \neq \emptyset$. Therefore, $f'$ is of type 3A, as desired.
    
    Now we can assume that $s=k+1$, i.e., $\supp(g)\setminus \{j_{k+1},j_{k+2}\}= \supp(f)\setminus \{i_{k+1},i_{k+2}\}$. Since 
    $\left|\supp(g)\setminus \supp(f)\right|\geq 2$, we conclude that $j_{k+1},j_{k+2}, i_{k+1},i_{k+2}$ are four  distinct integers that are not in $\supp(f)\cup \supp(g)$.  Again it suffices to show that $f'\coloneqq x_{j_s} \frac{f}{x_{i_s}}$ is of type 3A or 3B. 
    We observe that $i_{s-1}=j_{s-1}<j_s$, and hence $\secondmax(f')= j_s$. If $j_s\in [r+1,\ r+n-5]$, then $f'$ is of type 3B, as desired. Since we already assumed that $r<j_s<i_s<i_{k+2}=r+n-2$, we can now suppose that $j_s\in \{r+n-3,\ r+n-4\}$. Since $j_s<i_s<i_{k+2}=r+n-2$, this forces $j_s=r+n-4$ and $i_s=r+n-3$. We have $r+n-4=j_s < j_{k+2}\leq r+n$, and $j_{k+2}\notin \{i_{k+1},i_{k+2}\}=\{r+n-3,\ r+n-2\}$. Hence $j_{k+2}\in \{r+n-1,\ r+n\}$. In particular, $g$ must be of type 4 or 5. Since $r+n-3\notin \supp(g)$, the monomial $g$ is of type $4$, i.e., $j_{k+2}=r+n-1$, $r+1\in \supp(g)$, and $r+2\notin \supp(g)$. In particular, we have $r+2<j_s= r+n-4$, and thus $r+1\in \supp(f')$ and $r+2\notin \supp(f')$ as well. Thus $r+2\in [r+1,r+n-4]=[\min Q_{f'},\ \secondmax(f')]$ and $r+2\notin Q_{f'}$. This implies that $[\min Q_{f'},\ \secondmax(f')]\setminus Q_{f'}\neq \emptyset$. Therefore, $f'$ is of type 3A, as desired.
\end{proof}

\begin{lemma}\label{lem:f-type-3C}
    Assume that $f$ is of type 3C. Then there exist $i,j\in [r+n]$ such that $i>j$, $j\in \supp(g)\setminus \supp(f)$, and $x_j\frac{f}{x_i}$ is of type 3C.
\end{lemma}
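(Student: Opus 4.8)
The plan is to use the \emph{same} replacement that worked in Lemmas~\ref{lem:f-type-1} and~\ref{lem:f-type-2}, namely $i\coloneqq i_s$ and $j\coloneqq j_s$, and set $f'\coloneqq x_{j_s}\tfrac{f}{x_{i_s}}$. The key observation is that being of type 3C is an exceptionally robust condition: it only requires $\max(h)=r+n-2$ together with $\supp(h)\cap[r]\neq\emptyset$, and both survive this single swap. So, unlike Lemma~\ref{lem:f-type-3A-or-3B}, no case analysis on the type of $g$ should be needed.

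First I would record that $j=j_s$ is a legal choice, i.e.\ $j_s\in\supp(g)\setminus\supp(f)$ and $i_s>j_s$. The inequality is immediate from the definition of $s$. For $j_s\notin\supp(f)$: since $i_1=j_1,\dots,i_{s-1}=j_{s-1}$ and $i_{s-1}=j_{s-1}<j_s<i_s<i_{s+1}<\dots<i_{k+2}$ (with the convention that for $s=1$ one simply has $j_1<i_1$), the index $j_s$ lies strictly inside a gap of the increasing sequence $i_1<\dots<i_{k+2}$, hence is not one of the $i_\ell$; and $j_s\in\supp(g)$ is clear.

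Next I would verify that $f'$ is of type 3C. Because $s\neq k+2$, we have $i_s<i_{k+2}=r+n-2$, so $j_s<i_s<r+n-2$ and therefore $\max(f')=i_{k+2}=r+n-2$. For the $[r]$-condition, since $f$ is of type 3C we have $i_1\in[r]$; if $s\geq 2$ then $i_1=j_1$ is untouched and still divides $f'$, while if $s=1$ then $1\leq j_1<i_1\leq r$, so $j_1\in\supp(f')\cap[r]$. In either case $\supp(f')\cap[r]\neq\emptyset$, so $f'$ is of type 3C. Finally, $\left|\supp(f')\right|=k+2$ (one variable removed, one distinct variable added) and $r+n-2\in Q_{f'}$, so $f'\in\mingens\bigl(\HS_k(\CH_{n,r}^c)\bigr)$ by Proposition~\ref{prop:gens-CH}, and $i=i_s$, $j=j_s$ are the desired indices.

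I do not anticipate a real obstacle here; the only point requiring a moment's care is the boundary case $s=1$ (so that $i_1$ itself is deleted), which is handled above by passing to $j_1$ instead. It is worth remarking in the write-up that this is precisely why type 3C behaves so much more simply than types 3A/3B: the condition is closed under the canonical lex-exchange move with no dependence on the structure of $g$.
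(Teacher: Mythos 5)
Your proposal is correct and follows essentially the same route as the paper: the paper also takes $f'=x_{j_s}\tfrac{f}{x_{i_s}}$, notes $\max(f')=r+n-2$, and checks $\supp(f')\cap[r]\neq\emptyset$ by a small case split (the paper splits on whether $i_s\in[r]$, you split on whether $s=1$; both are equivalent in effect). No gaps.
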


\begin{proof}
    It suffices to show that $x_{j_s}\frac{f}{x_{i_s}}$ is of type 3C. Indeed, since $f$ is type 3C, we have $\max(f)=r+n-2$ and $\supp(f)\cap [r]\neq \emptyset$. It is clear that $\max(x_{j_s}\frac{f}{x_{i_s}})=r+n-2$. If $i_s\notin[r]$, then  $\supp(x_{j_s}\frac{f}{x_{i_s}}) \cap [r] \neq \emptyset$ since it contains $\supp(f)\cap [r]\neq \emptyset$. On the other hand, if $i_s\in [r]$, then $j_s\in [r]$ as well. In either case, we have $\supp(x_{j_s}\frac{f}{x_{i_s}}) \cap [r] \neq \emptyset$. Thus $x_{j_s}\frac{f}{x_{i_s}}$ is of type 3C, as~desired.
\end{proof}

\begin{lemma}\label{lem:f-type-4-g-type-5}
    Assume that $k\in [n+r-4]\setminus \{n-4\}$. Assume that $f$ is of type 4, $g$ is of type 5, and $\supp(g)\setminus \supp(f) =\{r+2,\ r+n\}$. Then, there exist $i,j\in [r+n]$ such that $i>j$, $j\in \supp(g)\setminus \supp(f)$, and either of the following holds:
    \begin{enumerate}[label=(\roman*)]
        \item $k<n-4$ and $x_j\frac{f}{x_i}$ is of type 2 or 3A;
        \item $k>n-4$ and $x_j\frac{f}{x_i}$ is of type 2 or 3C.
    \end{enumerate}
\end{lemma}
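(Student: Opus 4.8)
The plan is to exhibit the pair $(i,j)$ explicitly, taking $i = r+n-1$ and $j = r+2$, and then to distinguish two cases according to the value of $\secondmax(f)$. Note first that this choice is essentially forced: since $\max(f) = r+n-1$, no index of $\supp(f)$ exceeds $r+n$, so $j = r+n$ is impossible and we must have $j = r+2$; and if $i \neq r+n-1$ then $r+n-1$ survives in $\supp(x_j f/x_i)$, making that monomial have maximum $r+n-1$ and hence be of type 4 rather than type 2, 3A, or 3C. So I would set $f' \coloneqq x_{r+2}\,f/x_{r+n-1}$; since $r+n-1 \in \supp(f)$, $r+2 \in \supp(g)\setminus\supp(f)$, and $r+n-1 > r+2$ (as $n \geq 6$), this is an admissible choice of $(i,j)$.

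Next I would record the combinatorial consequences of the hypotheses. Because $g$ is of type 5, we have $r+n-3 = \secondmax(g) \in \supp(g)$ while $r+n-2, r+n-1 \notin \supp(g)$; since $r+n-3 \notin \{r+2, r+n\}$ for $n \geq 6$ and $\supp(g)\setminus\supp(f) = \{r+2, r+n\}$, it follows that $r+n-3 \in \supp(f)$. As $r+n-1 = \max(f) \in \supp(f)$ and $\secondmax(f) \leq r+n-2$, this pins down $\secondmax(f) \in \{r+n-3,\ r+n-2\}$. Also $r+2 \notin \supp(f)$ and $r+1 \in \supp(f)$ by type 4, so $r+1 \in \supp(f')$ and therefore $\min Q_{f'} = r+1$.

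Now I would split on $\secondmax(f)$. If $\secondmax(f) = r+n-3$, then $r+n-2 \notin \supp(f)$, so removing $x_{r+n-1}$ and adjoining $x_{r+2}$ (with $r+2 < r+n-3$) yields $\max(f') = r+n-3$, i.e., $f'$ is of type 2; this settles both (i) and (ii) simultaneously. If instead $\secondmax(f) = r+n-2$, then $\max(f') = r+n-2$, and since $r+n-3$ and $r+n-2$ are consecutive, $\secondmax(f') = r+n-3$. For (i): the hypothesis $k < n-4$ gives $\left|Q_{f'}\right| = \left|Q_f\right| \leq k+2 \leq n-3$; combined with $Q_{f'} \subseteq \{r+1,\dots,r+n-2\}$ and $r+n-2 \in Q_{f'}$, the set $Q_{f'}$ meets $[r+1,\ r+n-3] = [\min Q_{f'},\ \secondmax(f')]$ in at most $n-4 < n-3$ elements, so it misses one, and $f'$ is of type 3A. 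For (ii): the hypothesis $k > n-4$ together with $r+2 \notin \supp(f)$ forces $\left|Q_f\right| \leq n-2$, hence $\supp(f)\cap[r] = \supp(f)\setminus Q_f \neq \emptyset$; since $f'$ differs from $f$ only in indices exceeding $r$, also $\supp(f')\cap[r]\neq\emptyset$, and $f'$ is of type 3C.

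The argument is elementary once the pair $(i,j)$ is fixed, and I expect no genuine obstacle; the one place requiring care is the counting in case (i), which hinges on the strict inequality $k < n-4$ (equivalently $k+2 \leq n-3$) and on the observation that $r+n-2$ occupies a slot of $Q_{f'}$ lying strictly above $\secondmax(f')$. This is exactly why the value $k = n-4$ must be excluded from the statement.
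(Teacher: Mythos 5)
Your proposal is correct and follows essentially the same route as the paper: both set $f' = x_{r+2}\,f/x_{r+n-1}$, deduce $r+n-3\in\supp(f)$ from the type-5 hypothesis on $g$, split according to whether $\max(f')$ is $r+n-3$ (type 2) or $r+n-2$, and then use the same cardinality counts ($|\supp(f')|=k+2$ versus the size of $[\min Q_{f'},\secondmax(f')]$ for type 3A when $k<n-4$, and versus $n-2$ for type 3C when $k>n-4$). The only cosmetic difference is that you bound $|Q_f|$ using $r+2,r+n\notin Q_f$ where the paper bounds $|\supp(f')\cap([r+n]\setminus[r])|$ using $r+n-1,r+n\notin\supp(f')$; both yield the same conclusion.
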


\begin{proof}
    Set $f'\coloneqq x_{r+2}\frac{f}{x_{r+n-1}}$. It suffices to show that $f'$ is of the desired type. 
    Since $g$ is of type~5, we have $r+n-3\in \supp(g)$, 
    and since $r+n-3\notin \{r+2,\ r+n\}$, 
    we have $r+n-3\in \supp(f)$ as well. 
    Therefore, $r+n-3\in \supp(f')$, and in particular, this implies that $\max(f')$ is either $r+n-3$ or $r+n-2$. 
    If $\max(f') = r+n-3$, then $f'$ is of type 2, as desired.  Now we can assume that $\max(f') = r+n-2$. Then $\secondmax(f') = r+n-3$. Since $f$ is of type 4, we have $r+1\in \supp(f)$, and since $r+1\notin \{r+2,\ r+n\}$, we have $r+1\in \supp(f')$ as well. Thus $\min Q_{f'}=r+1$. We have two cases, depending on $k$:
    \begin{enumerate}[label=(\roman*)]
        \item If $k<n-4$, then we have
        \begin{align*}
            |Q_{f'} \setminus \{\max(f')\}| &\leq \left|\supp(f')\right| -1\\
            &= k+1 \\
            &< (n-4)+1 \\
            &= (r+n-3) - (r+1) + 1\\
            &= |[ \min Q_{f'}, \secondmax(f')]|.
        \end{align*}
        This, in particular, implies that $[ \min Q_{f'}, \secondmax(f')]\setminus Q_{f'} \neq \emptyset$. In other words, $f'$ is of type 3A, as desired.
        \item If $k>n-4$, then since $r+n-1, \ r+n \notin \supp(f')$, we have
        \[
        \left|\supp(f') \cap \left([r+n] \setminus [r]\right)\right| 
        \,\leq\, 
        \left|[r+n]\setminus \left([r] \cup \{r+n-1,\ r+n\}\right)\right| = n-2.
        \]
        On the other hand, we have $\left|\supp(f')\right|=k+2>n-2$. Therefore, $\supp(f')\nsubseteq \big( [r+n] \setminus [r]\big)$, or equivalently, $\supp(f')\cap [r]\neq \emptyset$. Thus $f'$ is of type 3C, as desired.\qedhere
    \end{enumerate}
\end{proof}

\begin{lemma}\label{lem:f-type-4-mostly}
    Assume that $f$ is of type 4 and $\supp(g)\setminus \supp(f) \neq \{r+2, \ r+n\}$. Then there exist $i,j\in [r+n]$ such that $i>j$, $j\in \supp(g)\setminus \supp(f)$, and $x_j\frac{f}{x_i}$ is of type 1, 2, 3A, 3B, or 4.
\end{lemma}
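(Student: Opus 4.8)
The plan is to exhibit an explicit pair $(i,j)$ in each case. The default move will be $i=i_s$, $j=j_s$, i.e., $x_j\frac{f}{x_i}=x_{j_s}\frac{f}{x_{i_s}}$; when this fails I will instead either delete $x_{i_{k+1}}$ or sacrifice the largest variable, taking $i=\max(f)=r+n-1$. Write $r+1=i_p$ for the index with $i_p=r+1$: it exists because $f$ is of type 4, and then $i_1,\dots,i_{p-1}\le r$ while $i_{p+1}\ge r+3$ since $r+2\notin\supp(f)$. As types 1, 2, 3A, 3B, and 4 all occur in the generator descriptions of both $\HS_k(\ACH_{n,r}^c)$ and $\HS_k(\CH_{n,r}^c)$ (Propositions~\ref{prop:gens-TH} and~\ref{prop:gens-CH}), it suffices to land on a monomial of one of these five types. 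I distinguish cases by whether $s=p$ and whether $j_s=r+2$.

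If $s\ne p$ and $j_s\ne r+2$, the default move works: since $s\ne k+2$ the variable $x_{r+n-1}$ is untouched and $\max(x_{j_s}\frac{f}{x_{i_s}})=r+n-1$; since $i_s\ne r+1$ the variable $x_{r+1}$ survives; and $x_{r+2}$ is not introduced because $j_s\ne r+2$ and $r+2\notin\supp(f)$. So the result is again of type 4. If $s=p$, then $j_s<r+1$, so $j_s\le r$; here, when $p\le k$, I would delete $x_{i_{k+1}}$ rather than $x_{r+1}$: since $i_{k+1}\ge r+3>j_s$, the monomial $x_{j_s}\frac{f}{x_{i_{k+1}}}$ still has maximum $r+n-1$, retains $x_{r+1}$, avoids $x_{r+2}$, hence is of type 4. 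In the remaining sub-case $p=k+1$ one has $\supp(f)=\{i_1,\dots,i_k\}\cup\{r+1,r+n-1\}$ with all $i_\ell\le r$, and $x_{j_s}\frac{f}{x_{r+n-1}}$ has support $\{i_1,\dots,i_k,j_s,r+1\}$, maximum $r+1\in[r+1,r+n-4]$, and meets $[r]$ (as $k\ge 2$), so it is of type 1.

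The main obstacle is the case $s\ne p$, $j_s=r+2$. One first checks that this situation forces $s=p+1$, so that $r+2\in\supp(g)\setminus\supp(f)$, and the default move fails (it would put $x_{r+2}$ into the support). This is exactly where the hypothesis $\supp(g)\setminus\supp(f)\ne\{r+2,r+n\}$ is used: together with $\left|\supp(g)\setminus\supp(f)\right|\ge 2$ it provides some $j^*\in\supp(g)\setminus\supp(f)$ with $j^*\notin\{r+2,r+n\}$, hence $j^*\in\{r+3,\dots,r+n-2\}$. If $j^*<i_{k+1}$, the move $(i,j)=(i_{k+1},j^*)$ again produces a type-4 monomial as above. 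If $j^*>i_{k+1}$, I take $(i,j)=(r+n-1,j^*)$; the result has maximum $j^*$ and second largest $i_{k+1}\in\{r+3,\dots,r+n-3\}$, and $r+2$ sits in the gap $[r+1,j^*]\setminus Q$, so a short check on the value of $j^*$ shows it is of type 2 (if $j^*=r+n-3$), of type 3A or 3B (if $j^*=r+n-2$), or of type 1 (if $j^*\le r+n-4$). All the genuine bookkeeping --- the behaviour of $\max$, $\secondmax$, and the gap sets $[r+1,\cdot]\setminus Q$ under the substitutions, and the comparison of $j^*$ with $i_{k+1}$ --- is concentrated in this last case; the first two cases are immediate once the correct variable to delete is pinned down. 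It is also worth recording explicitly why the configuration $\supp(g)\setminus\supp(f)=\{r+2,r+n\}$ has to be excluded: there $g$ is necessarily of type 5 and the only available substitutions land on monomials of no admissible type for $\ACH_{n,r}^c$, which is precisely why that configuration is dispatched separately in Lemma~\ref{lem:f-type-4-g-type-5}.
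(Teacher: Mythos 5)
Your proposal is correct and follows essentially the same strategy as the paper: the default lex-exchange $x_{j_s}f/x_{i_s}$ preserves type 4 unless it deletes $x_{r+1}$ or introduces $x_{r+2}$, and in those exceptional cases one swaps a different new index of $g$ (guaranteed to avoid $\{r+2,r+n\}$ by the hypothesis) for a large variable of $f$ and verifies the result is of type 1, 2, 3A, 3B, or 4 using $r+2$ as the gap element. The only differences are cosmetic choices of which variable to delete in two sub-cases (the paper removes $x_{i_{s+1}}$ where you remove $x_{i_{k+1}}$, and it always sacrifices $x_{r+n-1}$ where you sometimes keep it), and both verifications go through.
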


\begin{proof}
    Since $f$ is of type 4, we have $i_{k+2}=r+n-1$, $r+1\in \supp(f)$, and $r+2\notin \supp(f)$. 
    We have the following three cases:
    \begin{itemize}
        \item Suppose that $i_s\neq r+1$ and $j_s\neq r+2$. 
        Then $r+1\in \supp(x_{j_s}\frac{f}{i_s})$ and $r+2\notin \supp(x_{j_s}\frac{f}{i_s})$. 
        Moreover, we have $\max\left(x_{j_s}\frac{f}{i_s}\right) = r+n-1$. Thus $x_{j_s}\frac{f}{i_s}$ is of type 4. 
        Therefore $i=i_s$ and $j=j_s$ are what we need, as desired.
        \item Suppose that $i_s\neq r+1$ and $j_s= r+2$. 
        Since $\left|\supp(g)\setminus \supp(f)\right|\geq 2$, we can pick 
        $j_t\coloneqq \min \left(\supp(g)\setminus (\supp(f) \cup \{j_s\})\right)$. In particular, $j_t > j_s =r+2$. 
        
        If $j_t<r+n-1$, then it suffices to show that $f'\coloneqq x_{j_t}\frac{f}{x_{r+n-1}}$ is of type 1, 2, 3A or 3B. It is clear that $r+1=\min Q_{f'}$ and $r+2<j_t\leq \max(f')<r+n-1$. We then have the following three subcases:
        \begin{itemize}
            \item Suppose that $\max(f') \in [r+3,\ r+n-4]$. Then it is clear that 
            \[
            r+2\in [r+1,\max(f') ] \setminus \supp(f'),
            \] 
            which implies that
            \[
            [\min Q_{f'},\ \max(f')]\setminus Q_{f'}\neq \emptyset.
            \]
            Thus $f'$ is of type 1, as desired.
            \item  Suppose that $\max(f') = r+n-3$. Then $f'$ is of type 2, as desired. 
            \item Suppose that $\max(f') =r+n-2$. Then $f'$ is of type 3B, as desired, if $\secondmax(f')\in [r+1,\ r+n-5]$. On the other hand, if $\secondmax(f')\in \{r+n-3,\ r+n-4\}$, then since 
            \[
            r+2\in [r+1,\secondmax(f')) ]\setminus \supp(f'),
            \] 
            we have 
            \[
            [\min Q_{f'},\secondmax(f')]\setminus Q_{f'} \neq \emptyset.
            \]
            Thus $f'$ is of type 3A, as desired.
    \end{itemize}
        Since $j_t\notin \supp(f)$, we have $j_t\neq r+n-1$. Hence we can now assume that $j_t=r+n$. By the definition of $j_t$, the condition $j_t=r+n$ implies that $\supp(g)\setminus \supp(f) = \{j_s,j_t\}= \{r+2,\ r+n\}$, contradicting the hypothesis.
        
        \item Suppose that $i_s=r+1$. Since $j_s<i_s< i_{s+1}$, it suffices to show that $f''\coloneqq x_{j_s}\frac{f}{x_{i_{s+1}}}$ is of type 1 or 4. Indeed, it is clear from the construction that $r+1\in \supp(f'')$, and $r+2\notin \supp(f'')$. If $i_{s+1}<i_{k+2}$, then this guarantees that $\max(f'')= i_{k+2}= r+n-1$, and hence $f''$ is of type 4, as desired. Now, we can assume that $i_{s+1}=i_{k+2}$. This means that $Q_f=\{r+1,\ r+n\}$, and hence $Q_{f''}=\{r+1\}$. Moreover, we have $j_s\in \supp(f'')\cap [r]$. Thus $f''$ is of type 1, as desired. \qedhere
    \end{itemize}
\end{proof}

\begin{lemma}\label{lem:f-type-5}
    Assume that $f$ is of type 5. Then there exist $i,j\in [r+n]$ such that $i>j$, $j\in \supp(g)\setminus \supp(f)$, and $x_j\frac{f}{x_i}$ is of type 2.
\end{lemma}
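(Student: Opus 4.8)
The plan is to reuse the mechanism of the preceding $f$-type lemmas, now taking the divisor $x_i$ we remove to be $\max(f)$. Since $f$ is of type 5, we have $\max(f)=i_{k+2}=r+n$ and $\secondmax(f)=i_{k+1}=r+n-3$; in particular $r+n-2,r+n-1\notin\supp(f)$. Setting $i:=i_{k+2}=r+n$, the support of $x_j\frac{f}{x_i}$ is $\big(\supp(f)\setminus\{r+n\}\big)\cup\{j\}$, whose maximum is $\max(r+n-3,\,j)$. Hence this monomial is of type 2 precisely when we can choose $j\in\supp(g)\setminus\supp(f)$ with $j<r+n-3$, so the lemma reduces to producing such a $j$.

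For this I would take $j:=j_s$. As in the general setup of the section, $j_s\in\supp(g)\setminus\supp(f)$, since $j_s>j_{s-1}=i_{s-1}$ and $j_s<i_s$ place $j_s$ outside both $\{i_1,\dots,i_{s-1}\}$ and $\{i_s,\dots,i_{k+2}\}$. I then claim $j_s\le r+n-4$. Suppose instead $j_s\ge r+n-2$; note $j_s$ cannot equal $r+n-3$ or $r+n$, as these lie in $\supp(f)$ while $j_s$ does not. Then $i_s>j_s\ge r+n-2$ forces $i_s\in\{r+n-1,r+n\}$, and since $r+n-1\notin\supp(f)$ we get $i_s=r+n=i_{k+2}$, i.e.\ $s=k+2$, contradicting the standing assumption $s\neq k+2$ (which holds because $\left|\supp(g)\setminus\supp(f)\right|\ge 2$). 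Therefore $j_s\le r+n-3$, and as $r+n-3\in\supp(f)$ but $j_s\notin\supp(f)$, in fact $j_s\le r+n-4<r+n-3$.

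It then remains to check that $i:=i_{k+2}=r+n$ and $j:=j_s$ work: $i>j$, $j\in\supp(g)\setminus\supp(f)$, and $x_j\frac{f}{x_i}$ is a square-free monomial of degree $k+2$ with maximum $\max(r+n-3,j)=r+n-3$, hence of type 2; moreover $r+n-3\in Q_{x_jf/x_i}$, so by Proposition~\ref{prop:gens-CH} (resp.\ Proposition~\ref{prop:gens-TH}) it lies in $\mingens\!\big(\HS_k(\CH_{n,r}^c)\big)$ (resp.\ $\mingens\!\big(\HS_k(\ACH_{n,r}^c)\big)$). I do not expect a genuine obstacle here; the only subtlety is excluding the degenerate configuration $\supp(g)\setminus\supp(f)=\{r+n-2,r+n-1\}$, which is the only way every element of $\supp(g)\setminus\supp(f)$ could fail to be $<r+n-3$, and the contradiction argument above rules it out.
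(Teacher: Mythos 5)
Your proposal is correct and follows essentially the same route as the paper: both take $i=\max(f)=r+n$ and $j=j_s$, and reduce to showing $j_s<r+n-3$ so that $x_{j_s}\frac{f}{x_{r+n}}$ has maximum $r+n-3$ and is therefore of type 2. The only cosmetic difference is that the paper gets $j_s<r+n-3$ directly from $j_s<i_s\leq i_{k+1}=r+n-3$ (using $s\neq k+2$), whereas you argue by contradiction; both hinge on the same standing assumption.
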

    
\begin{proof}
    Since $f$ is of type 5, we have $i_{k+2}=r+n$ and $i_{k+1}=r+n-3$. It suffices to show that $f'\coloneqq x_{j_s}\frac{f}{x_{r+n}}$ is of type 2. Indeed, we have $j_s<i_s\leq i_{k+1}=r+n-3$. Thus $\max(f')= r+n-3$. Thus, $f'$ is of type 2, as claimed.
\end{proof}

We are now in a position to determine exactly all the values of $k$ for which $\HS_k(\CH_{n,r})$ and $\HS_k(\ACH_{n,r})$ have linear quotients. We start with $\HS_k(\ACH_{n,r})$. We remark that due to Lemma~\ref{lem:f-type-4-g-type-5}, monomials of type 3C may come into play when $k>n-4$. However, monomials of type 3C are not minimal generators of $\HS_k(\ACH_{n,r})$ by Proposition~\ref{prop:gens-TH}. Thus, it is of no surprise that the next result only holds for $k< n-4$.

\begin{proposition}\label{prop:TH-LQ}
    For each $k\in [2, \ n-4)$, the ideal $\HS_k(\CH_{n,r}^c)$ has linear quotients.
\end{proposition}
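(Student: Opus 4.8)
The plan is to verify the criterion of Lemma~\ref{lem:LQ-lex} for the lex order induced by $x_1 > x_2 > \cdots > x_{r+n}$ fixed above: given minimal generators $f, g$ of $\HS_k(\CH_{n,r}^c)$ with $g >_{\lex} f$, I must produce $i \in \supp(f)$ and $j \in \supp(g) \setminus \supp(f)$ with $i > j$ and $x_j\frac{f}{x_i} \in \mingens(\HS_k(\CH_{n,r}^c))$. If $\left|\supp(g) \setminus \supp(f)\right| = 1$ this is automatic, since then $\supp(g)\setminus\supp(f) = \{j\}$, $\supp(f)\setminus\supp(g) = \{i\}$, we have $g = x_j\frac{f}{x_i}$, and $g >_{\lex} f$ forces $i > j$. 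So I reduce to $\left|\supp(g)\setminus\supp(f)\right| \geq 2$ and keep the notation $f = x_{i_1}\cdots x_{i_{k+2}}$, $g = x_{j_1}\cdots x_{j_{k+2}}$ together with the index $s$ set up right before Lemma~\ref{lem:f-type-1}.

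The body of the argument is a case split on the type of $f$, which by Proposition~\ref{prop:gens-CH} is one of 1, 2, 3A, 3B, 3C, 4, or 5. Lemmas~\ref{lem:f-type-1}, \ref{lem:f-type-2}, \ref{lem:f-type-3A-or-3B}, \ref{lem:f-type-3C}, and \ref{lem:f-type-5} dispatch $f$ of type 1, 2, 3A/3B, 3C, and 5 respectively, each producing a valid pair $(i,j)$ and identifying $x_j\frac{f}{x_i}$ as a monomial of type 1, 2, 3A, 3B, or 3C. For $f$ of type 4 I split on $\supp(g)\setminus\supp(f)$: if this set is $\{r+2,\ r+n\}$, then $r+n\in\supp(g)$ forces $\max(g) = r+n$, so $g$ is necessarily of type 5 (the only type attaining that maximum); since $k\in[n+r-4]\setminus\{n-4\}$ — and indeed $k < n-4$, so we land in part (i) — Lemma~\ref{lem:f-type-4-g-type-5} yields $x_j\frac{f}{x_i}$ of type 2 or 3A; otherwise $\supp(g)\setminus\supp(f)\neq\{r+2,\ r+n\}$ and Lemma~\ref{lem:f-type-4-mostly} yields $x_j\frac{f}{x_i}$ of type 1, 2, 3A, 3B, or 4. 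In every branch $x_j\frac{f}{x_i}$ is squarefree with support of size $k+2$ (because $i\in\supp(f)$ and $j\notin\supp(f)$) and, being of one of the above types, satisfies $Q_{x_j f/x_i}\neq\emptyset$; hence by Proposition~\ref{prop:gens-CH} it is a minimal generator, which is exactly what Lemma~\ref{lem:LQ-lex} requires. The same scheme, with Proposition~\ref{prop:gens-TH} in place of Proposition~\ref{prop:gens-CH}, proves the corresponding statement for $\ACH_{n,r}^c$; there type 3C is not a generator type, and this is precisely why the hypothesis $k < n-4$ is essential, since part (ii) of Lemma~\ref{lem:f-type-4-g-type-5} (which governs $k > n-4$) would return a type-3C replacement.

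I expect all the real content to sit inside Lemma~\ref{lem:f-type-4-g-type-5}: the interaction of a type-4 generator with a type-5 generator whose support differs from it exactly in $\{r+2,\ r+n\}$ is the one configuration in which the obvious replacement fails and a more careful choice is needed, and it is exactly this configuration that becomes genuinely obstructive at $k = n-4$, consistent with Theorem~\ref{thm:HLQ-CH-graph}(ii) and Theorem~\ref{thm:HLQ-TH-graph}(ii). All remaining branches are bookkeeping against the explicit list of types. Consequently the write-up is brief: cite Lemma~\ref{lem:LQ-lex}, clear the case $\left|\supp(g)\setminus\supp(f)\right| = 1$, then run the case analysis on the type of $f$, invoking Lemmas~\ref{lem:f-type-1}--\ref{lem:f-type-5} and closing each branch by Proposition~\ref{prop:gens-CH}.
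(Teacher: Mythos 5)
Your proposal is correct and follows essentially the same route as the paper: reduce to the lex-quotient criterion of Lemma~\ref{lem:LQ-lex} (with the same variable order), then run the case analysis on the type of $f$ via Lemmas~\ref{lem:f-type-1}--\ref{lem:f-type-5}, using $k<n-4$ to land in part (i) of Lemma~\ref{lem:f-type-4-g-type-5}. The only (harmless) difference is that you also carry type 3C through Lemma~\ref{lem:f-type-3C}, matching the generator list of Proposition~\ref{prop:gens-CH} for $\CH_{n,r}^c$ as literally stated, whereas the paper's own proof of this proposition omits that case because its argument is really phrased for $\ACH_{n,r}^c$.
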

\begin{proof}
    By Lemma~\ref{lem:LQ-lex} and Proposition~\ref{prop:gens-CH}, it suffices to show that for any $f,g$ where 
    \begin{align*}
        &\left|\supp(f)\right|=\left|\supp(g)\right|=k+2,\\
        &Q_f\neq \emptyset, \ Q_g\neq \emptyset,\\
        &\begin{multlined}
            \text{$f$ and $g$ are of type 1, 2, 3A, 3B, 4, or 5}\\
            \text{(we note that $f$ and $g$ do not need to be of the same type)},
        \end{multlined}
    \end{align*}
    then there exist $i,j\in [r+n]$ such that $i>j$, $j\in \supp(g)\setminus \supp(f)$, and $x_j\frac{f}{x_i}$ is also of type 1, 2, 3A, 3B, 4, or 5. Indeed, this follows from Lemmas~\ref{lem:f-type-1}, \ref{lem:f-type-2}, \ref{lem:f-type-3A-or-3B}, \ref{lem:f-type-4-g-type-5}, \ref{lem:f-type-4-mostly}, and \ref{lem:f-type-5}, as desired.
\end{proof}

\begin{proposition}\label{prop:TH-no-LQ}
    For each $n-4\leq k\leq n+r-4$ and $r\geq 0$, the ideal $\HS_k(\ACH_{n,r}^c)$ does not have linear quotients.
\end{proposition}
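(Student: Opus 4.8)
The plan is to reduce the statement to the extremal situation in which $k$ equals the top homological degree of the graph (possibly after shrinking the graph), and then to show that in that situation $\HS_k$ is generated by just two monomials, to which Lemma~\ref{lem:no-LQ-example} applies directly.

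First I would put $r' \coloneqq k-(n-4)$, so that $0 \le r' \le r$, and check that $\ACH_{n,r'}$ is an induced subgraph of $\ACH_{n,r}$: taking the induced subgraph of $\ACH_{n,r}$ on the last $r'$ cone vertices $x_{r-r'+1},\dots,x_r$ together with all $n$ vertices of $\HH_n$ and relabelling reproduces exactly the defining edge set of $\ACH_{n,r'}$ (all the cone vertices $x_1,\dots,x_r$ play symmetric roles). Since $\ACH_{n,r}$ is chordal by Lemma~\ref{lem:elimination-ordering-CH-TH}, Lemma~\ref{lem:linear-quotients-induced} then reduces the claim to showing that $\HS_k(\ACH_{n,r'}^c)$ fails to have linear quotients; and now $k = n+r'-4$ is, by Proposition~\ref{prop:proj-CH-TH}, the largest homological degree in which $\ACH_{n,r'}^c$ has a nonzero shift ideal.

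Next I would exploit that $\HS_k(\ACH_{n,r'}^c)$ is generated in degree $k+2 = (n+r')-2$, so every minimal generator $h$ omits exactly two of the $n+r'$ variables, and run through the types of Proposition~\ref{prop:gens-TH}. Types 1 and 2 require at least three of $x_{r'+n-3},x_{r'+n-2},x_{r'+n-1},x_{r'+n}$ to be absent and so cannot occur. A monomial of type 3A or 3B must omit exactly $x_{r'+n-1}$ and $x_{r'+n}$, hence equals $M/(x_{r'+n-1}x_{r'+n})$ with $M \coloneqq \prod_{i=1}^{r'+n}x_i$; but then $\secondmax(h)=r'+n-3 \notin [r'+1,\,r'+n-5]$ and $[\min Q_h,\,\secondmax(h)] = [r'+1,\,r'+n-3] \subseteq Q_h$, so $h$ is of neither type, a contradiction. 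Type 4 forces the two omitted variables to be $x_{r'+2}$ and $x_{r'+n}$, and type 5 forces them to be $x_{r'+n-2}$ and $x_{r'+n-1}$. Therefore $\HS_k(\ACH_{n,r'}^c) = \bigl( \tfrac{M}{x_{r'+2}x_{r'+n}},\ \tfrac{M}{x_{r'+n-2}x_{r'+n-1}} \bigr)$.

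Finally, since $n \ge 6$ the four indices $r'+2,\ r'+n-2,\ r'+n-1,\ r'+n$ are pairwise distinct, so Lemma~\ref{lem:no-LQ-example} shows this two-generated ideal has no linear resolution, hence no linear quotients; by the reduction above, neither does $\HS_k(\ACH_{n,r}^c)$. I expect the type analysis of the third paragraph to be the only delicate point — in particular, confirming that omitting $\{x_{r'+n-1},x_{r'+n}\}$ does \emph{not} yield a type-3A or 3B generator — whereas the induced-subgraph reduction and the closing appeal to Lemma~\ref{lem:no-LQ-example} are routine.
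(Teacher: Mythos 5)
Your proposal is correct and follows essentially the same route as the paper: reduce to the extremal case $r'=k-n+4$ via the induced-subgraph Lemma~\ref{lem:linear-quotients-induced}, show via the type classification of Proposition~\ref{prop:gens-TH} that the top nonzero shift ideal has exactly the two generators $M/(x_{r'+2}x_{r'+n})$ and $M/(x_{r'+n-2}x_{r'+n-1})$, and conclude with Lemma~\ref{lem:no-LQ-example}. The only difference is cosmetic -- you organize the case analysis by generator type while the paper cases on $\max(h)$ -- and your explicit exclusion of types 3A/3B matches the paper's observation that $x_1\cdots x_{r+n-2}$ is not a minimal generator.
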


\begin{proof}
    As $\ACH_{n,r}^c$ contains $\ACH_{n,r'}^c$ as an induced subgraph whenever $r>r'$, by Lemma~\ref{lem:linear-quotients-induced}, it suffices to assume that $r=k-n+4\geq 0$ and show that $\HS_k(\ACH_{n,r}^c)$ does not have linear quotients. Consider a minimal monomial generator $f=x_{i_1}x_{i_2}\cdots x_{i_{r+n-2}}$ of $\HS_k(\ACH_{n,r}^c)$, where $1\leq i_1<i_2<\cdots<i_{r+n-2}\leq n+r$. We note that $i_{r+n-2} \geq r+n-2$.  We will derive all the possible options for $\supp(f)$. 
    
    If $i_{r+n-2}\leq r+n-2$, then we must have $i_{r+n-2}=r+n-2$ and
    \[
    \supp(f) = [r+n-2].
    \]
    However, since $i_{r+n-2}= i_{k+2}= r+n-2$, in this case $f=x_1x_2\cdots x_{r+n-2}$ is not a minimal monomial generator of $\HS_k(\ACH_{n,r}^c)$ by Proposition~\ref{prop:gens-TH}.   

    If $i_{r+n-2}=r+n-1$, then $f=\frac{\prod_{i=1}^{r+n-1} x_i}{x_a}$ for some $a\in [r+n-3]$. By Proposition~\ref{prop:gens-TH}, $f$ is of type $4$. In particular, we have $a=r+2$ and $f= \frac{\prod_{i=1}^{r+n} x_i }{x_{r+2}x_{r+n}}$.

    Finally, we can now assume that $i_{r+n-2}=r+n$. By Proposition~\ref{prop:gens-TH} again, $f$ is of type 5. Since $\left|\supp(f)\right|=r+n-2$, we have $f=\frac{\prod_{i=1}^{k+4} x_i }{x_{r+n-2}x_{r+n-1}}$.
    
    Thus $\HS_k(\ACH_{n,k-n+4}^c)=\big( \frac{\prod_{i=1}^{r+n} x_i }{x_{r+n-2}x_{r+n-1}}, \ \frac{\prod_{i=1}^{r+n} x_i }{x_{r+2}x_{r+n}} \big)$, which does not have linear quotients by Lemma~\ref{lem:no-LQ-example}, as desired.
\end{proof}

\begin{proof}[Proof of Theorem~\ref{thm:HLQ-TH-graph}] The theorem follows from Propositions~\ref{prop:TH-LQ}, \ref{prop:TH-no-LQ}, and \ref{prop:proj-CH-TH}.
\end{proof}

As $\ACH_{n,0}^c=\HH_n^c$, we have the following.

\begin{theorem}\label{thm:HLQ-H}
    For each $n\geq 6$, we have 
    \begin{enumerate}[label=(\roman*)]
        \item $\HS_k(\HH_{n}^c)\neq 0$ has linear quotients if $2\leq k< n-4$;
        \item $\HS_{n-4}(\HH_{n}^c)\neq 0$ does not have linear quotients; 
        \item $\HS_k(\HH_{n}^c)=0$ if $n-4<k$.
    \end{enumerate}
\end{theorem}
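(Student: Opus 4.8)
The plan is to obtain this statement as an immediate specialization of Theorem~\ref{thm:HLQ-TH-graph} at $r = 0$. First I would record the identification $\ACH_{n,0} = \HH_n$: when $r = 0$ the vertex set $V(\ACH_{n,r}) = V(\HH_n) \cup \{x_1,\dots,x_r\}$ reduces to $V(\HH_n)$, and the two extra families of edges $\{x_ix_j \colon i,j\in[r],\ i \neq j\}$ and $\{x_ix_{r+j} \colon i\in[r],\ j\in\{n-2,n-1,n\}\}$ are both empty, so $E(\ACH_{n,0}) = E(\HH_n)$. Taking complements gives $\ACH_{n,0}^c = \HH_n^c$.

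Next I would substitute $r = 0$ into the three clauses of Theorem~\ref{thm:HLQ-TH-graph}. Clause (i) is verbatim clause (i) of the present statement. In clauses (ii) and (iii) the quantity $n + r - 4$ collapses to $n - 4$, so the range $n - 4 \le k \le n + r - 4$ in clause (ii) degenerates to the single value $k = n - 4$, yielding part (ii) here, and clause (iii) becomes ``$\HS_k(\HH_n^c) = 0$ if $k > n - 4$'', which is part (iii). The nonvanishing assertions $\HS_k(\HH_n^c) \neq 0$ in parts (i) and (ii) likewise follow from the $r = 0$ case of Proposition~\ref{prop:proj-CH-TH}.

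There is essentially no obstacle: all of the content already lives in the proof of Theorem~\ref{thm:HLQ-TH-graph}, which rests on Propositions~\ref{prop:TH-LQ},~\ref{prop:TH-no-LQ}, and~\ref{prop:proj-CH-TH}. If one preferred a self-contained argument for $\HH_n^c$, it would simply retrace those proofs with $r = 0$: the minimal generators of $\HS_k(\HH_n^c)$ are the type I--V monomials of Proposition~\ref{prop:gens-H}; for $2 \le k < n-4$ one verifies the lex-exchange criterion of Lemma~\ref{lem:LQ-lex} type by type exactly as in Lemmas~\ref{lem:f-type-1} through~\ref{lem:f-type-5} (with the type 3C case vacuous, since $[r] = \emptyset$); and at $k = n-4$ one identifies $\HS_{n-4}(\HH_n^c)$ with a two-generated ideal of the shape in Lemma~\ref{lem:no-LQ-example}, which fails to have linear quotients. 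But the cleanest route, and the one I would take, is the one-line deduction from Theorem~\ref{thm:HLQ-TH-graph}.
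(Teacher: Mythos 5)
Your proposal is correct and is exactly the paper's route: the paper states this theorem immediately after the remark ``As $\ACH_{n,0}^c=\HH_n^c$, we have the following,'' i.e., it too obtains the result by specializing Theorem~\ref{thm:HLQ-TH-graph} (equivalently Propositions~\ref{prop:TH-LQ}, \ref{prop:TH-no-LQ}, and~\ref{prop:proj-CH-TH}) at $r=0$. Your substitution of $r=0$ into the three clauses matches the paper's intent precisely.
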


Finally, we prove the results for $\HS_k(\CH_{n,r})$. 

\begin{proposition}\label{prop:CH-LQ}
    For each $k\in [2, \ r+n-4]\setminus \{n-4\}$, the ideal $\HS_k(\CH_{n,r}^c)$ has linear~quotients.
\end{proposition}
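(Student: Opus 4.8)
The plan is to split the range of $k$ into two parts. For $k\in[2,\,n-4)$ there is nothing new to prove: this is exactly Proposition~\ref{prop:TH-LQ}. So I would concentrate on the range $n-4<k\leq r+n-4$, which forces $r\geq1$. Here I would again invoke Lemma~\ref{lem:LQ-lex} together with the generator description of Proposition~\ref{prop:gens-CH}, working with the lex order $x_1>x_2>\cdots>x_{r+n}$ fixed just before Lemma~\ref{lem:f-type-1}. Thus, given two minimal monomial generators $f,g$ of $\HS_k(\CH_{n,r}^c)$ with $g>_{\lex}f$, I must produce $i\in\supp(f)$ and $j\in\supp(g)\setminus\supp(f)$ with $j<i$ and $x_j\frac{f}{x_i}\in\mingens(\HS_k(\CH_{n,r}^c))$. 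The case $\left|\supp(g)\setminus\supp(f)\right|=1$ is immediate, since then $x_j\frac{f}{x_i}=g$ for the obvious choice of $i$ and $j$; so I may assume $\left|\supp(g)\setminus\supp(f)\right|\geq2$, which places us in the situation set up just before Lemma~\ref{lem:f-type-1} (in particular $s\neq k+2$).

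The core of the argument is a case analysis on the type of $f$, which by Proposition~\ref{prop:gens-CH} is one of 1, 2, 3A, 3B, 3C, 4, or 5. When $f$ has type 1, 2, 3C, or 5, I would apply Lemmas~\ref{lem:f-type-1}, \ref{lem:f-type-2}, \ref{lem:f-type-3C}, and~\ref{lem:f-type-5} respectively, whose outputs have type 1, 2, 3C, and 2. When $f$ has type 3A or 3B, I would apply Lemma~\ref{lem:f-type-3A-or-3B}; its hypothesis on $g$ is automatically satisfied because every minimal generator has one of the listed types, and its output has type 1, 2, 3A, or 3B. In each of these cases the resulting $x_j\frac{f}{x_i}$ is of an allowed type and has support of size $k+2$, hence is a minimal generator of $\HS_k(\CH_{n,r}^c)$ by Proposition~\ref{prop:gens-CH}.

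The only delicate case is $f$ of type 4, which I would split according to whether $\supp(g)\setminus\supp(f)=\{r+2,\,r+n\}$. If it is not, Lemma~\ref{lem:f-type-4-mostly} applies and yields a monomial of type 1, 2, 3A, 3B, or 4. If it is, then $r+n\in\supp(g)$ gives $\max(g)=r+n$, so by Proposition~\ref{prop:gens-CH} the generator $g$ is itself of type 5; since $k>n-4$ and $k\in[n+r-4]\setminus\{n-4\}$, Lemma~\ref{lem:f-type-4-g-type-5}(ii) applies and produces a monomial of type 2 or \emph{type 3C}. This last point is exactly where $\CH_{n,r}^c$ behaves better than $\ACH_{n,r}^c$: a type-3C monomial of the correct degree is a genuine minimal generator of $\HS_k(\CH_{n,r}^c)$ by Proposition~\ref{prop:gens-CH}, whereas it is \emph{not} one of $\HS_k(\ACH_{n,r}^c)$ by Proposition~\ref{prop:gens-TH} — which is precisely why the analogue for $\ACH$ (Proposition~\ref{prop:TH-LQ}) is confined to $k<n-4$. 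Once all cases are in hand, Lemma~\ref{lem:LQ-lex} gives that $\HS_k(\CH_{n,r}^c)$ has linear quotients with respect to $>_{\lex}$.

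I expect the main obstacle to be the bookkeeping inside the type-4 case, in particular the sub-split on whether $\supp(g)\setminus\supp(f)=\{r+2,\,r+n\}$ and the verification that the monomial produced by Lemma~\ref{lem:f-type-4-g-type-5}(ii) is still a legitimate generator; but all of that has already been isolated in Lemmas~\ref{lem:f-type-4-mostly} and~\ref{lem:f-type-4-g-type-5}, so assembling the proof from the ingredients above should be essentially routine.
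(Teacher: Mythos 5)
Your proposal is correct and follows essentially the same route as the paper: its proof of Proposition~\ref{prop:CH-LQ} is precisely the assembly, via Lemma~\ref{lem:LQ-lex} and Proposition~\ref{prop:gens-CH}, of the case Lemmas~\ref{lem:f-type-1}, \ref{lem:f-type-2}, \ref{lem:f-type-3A-or-3B}, \ref{lem:f-type-3C}, \ref{lem:f-type-4-g-type-5}, \ref{lem:f-type-4-mostly}, and~\ref{lem:f-type-5} according to the type of $f$, with the type-4 case split exactly as you describe and the role of type 3C identified correctly. The only cosmetic difference is that the paper runs this case analysis uniformly over all $k\in[2,\,r+n-4]\setminus\{n-4\}$ rather than deferring $k<n-4$ to Proposition~\ref{prop:TH-LQ}; since your analysis already handles type-3C generators (which occur for $\CH_{n,r}^c$ even when $k<n-4$), this changes nothing.
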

\begin{proof}
    By Lemma~\ref{lem:LQ-lex} and Proposition~\ref{prop:gens-CH}, it suffices to show that for any $f,g$ where 
    \begin{align*}
        &\left|\supp(f)\right|=\left|\supp(g)\right|=k+2,\\
        &Q_f\neq \emptyset, \ Q_g\neq \emptyset,\\
        &\begin{multlined}
            \text{$f$ and $g$ are of type 1, 2, 3A, 3B, 3C, 4, or 5}\\
            \text{(we note that $f$ and $g$ do not need to be of the same type)},
        \end{multlined}
    \end{align*}
    then there exist $i,j\in [r+n]$ such that $i>j$, $j\in \supp(g)\setminus \supp(f)$, and $x_j\frac{f}{x_i}$ is also of type 1, 2, 3A, 3B, 3C, 4, or 5. Indeed, this follows from Lemmas~\ref{lem:f-type-1}, \ref{lem:f-type-2}, \ref{lem:f-type-3A-or-3B}, \ref{lem:f-type-3C}, \ref{lem:f-type-4-g-type-5}, \ref{lem:f-type-4-mostly}, and \ref{lem:f-type-5}, as desired.
\end{proof}

\begin{proposition}\label{prop:CH-no-LQ}
    The ideal $\HS_{n-4}(\CH_{n,r}^c)$ does not have linear quotients.
\end{proposition}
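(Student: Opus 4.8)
The plan is to avoid computing with $\CH_{n,r}^c$ directly and instead restrict to an induced subgraph. The key observation is that the induced subgraph of $\CH_{n,r}$ on the vertex set $\{x_{r+i}\colon i\in[n]\}$ is exactly $\HH_n$, so $\HH_n^c$ is an induced subgraph of $\CH_{n,r}^c$. Since $\CH_{n,r}$ is chordal (Lemma~\ref{lem:elimination-ordering-CH-TH}) and $\HH_n$ is chordal (Lemma~\ref{lem:elm-order-H-and-LH}), Lemma~\ref{lem:linear-quotients-induced}, applied with $k=n-4$ (which is $\geq 2$ as $n\geq 6$), shows that if $\HS_{n-4}(\CH_{n,r}^c)$ had linear quotients, then so would $\HS_{n-4}(\HH_n^c)$. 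Hence the statement reduces to showing that $\HS_{n-4}(\HH_n^c)$ does not have linear quotients.

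That last fact is Theorem~\ref{thm:HLQ-H}(ii), so a single citation suffices; but I would also indicate the short reason. By Proposition~\ref{prop:gens-H}, a minimal generator $h$ of $\HS_{n-4}(\HH_n^c)$ satisfies $\lvert\supp(h)\rvert = n-2$, so $h=\frac{\prod_{i=1}^n x_i}{x_a x_b}$ for two distinct indices $a,b\in[n]$. Running through the types: types I and II are impossible, since they would force $\max(h)\leq n-3$ and hence $\supp(h)$ to omit $x_{n-2}$, $x_{n-1}$, $x_n$, contradicting $\lvert\supp(h)\rvert = n-2$; type III forces $\{a,b\}=\{n-1,n\}$, but then $\secondmax(h)=n-3$ and $[\min(h),\secondmax(h)]\setminus\supp(h)=\emptyset$, so $h$ is not of type III after all; type IV forces $\{a,b\}=\{2,n\}$; and type V forces $\{a,b\}=\{n-2,n-1\}$. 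Therefore $\HS_{n-4}(\HH_n^c)=\bigl(\frac{\prod_{i=1}^n x_i}{x_{n-2}x_{n-1}},\ \frac{\prod_{i=1}^n x_i}{x_2 x_n}\bigr)$, and since $n\geq 6$ gives $\{n-2,n-1\}\cap\{2,n\}=\emptyset$, Lemma~\ref{lem:no-LQ-example} completes the argument.

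I do not expect a genuine obstacle: the proof is assembled entirely from Lemmas~\ref{lem:linear-quotients-induced} and~\ref{lem:no-LQ-example} together with the generator description in Proposition~\ref{prop:gens-H}. The only point requiring care is the bookkeeping in the type analysis above, i.e.\ verifying that exactly the two displayed monomials survive as minimal generators of $\HS_{n-4}(\HH_n^c)$; this is the $r=0$ instance of the computation already carried out in the proof of Proposition~\ref{prop:TH-no-LQ}, and in particular no circularity arises since Theorem~\ref{thm:HLQ-H} and Proposition~\ref{prop:TH-no-LQ} do not reference $\CH_{n,r}$.
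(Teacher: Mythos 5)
Your proof is correct and takes essentially the same route as the paper: the paper's proof of this proposition is exactly the one-line reduction via Lemma~\ref{lem:linear-quotients-induced} to Theorem~\ref{thm:HLQ-H}(ii), using that $\HH_n^c$ is an induced subgraph of $\CH_{n,r}^c$. Your extra type-by-type verification that $\HS_{n-4}(\HH_n^c)$ has exactly the two stated generators is accurate and simply unwinds Theorem~\ref{thm:HLQ-H}(ii) (the $r=0$ case of Proposition~\ref{prop:TH-no-LQ}), with no circularity.
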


\begin{proof}
    Since $\CH_{n,r}^c$ contains $\HH_{n}^c$ as an induced subgraph, the result follows from Lemma~\ref{lem:linear-quotients-induced} and Theorem~\ref{thm:HLQ-H}.
\end{proof}

\begin{proof}[Proof of Theorem~\ref{thm:HLQ-CH-graph}] The theorem follows from Propositions~\ref{prop:CH-LQ}, \ref{prop:CH-no-LQ}, and \ref{prop:proj-CH-TH}.
\end{proof}

\section{A discussion on the possible patterns} \label{sec:possible-patterns}

We give a partial answer to Conjecture~\ref{conj:main}.

\begin{theorem}\label{thm:conjecture-partial}
    If the edge ideal $I(G)$ has homological linear quotients, then $G$ is co-chordal and $\HH_n^c$-free for any $n\geq 6$. The converse holds if $G$ has at most 10 vertices.
\end{theorem}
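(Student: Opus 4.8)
The statement consists of two implications; the forward one is short. To prove that homological linear quotients forces $G$ to be co-chordal and $\HH_n^c$-free, I would first note that $\HS_0(I(G)) = I(G)$ has linear quotients, hence a linear resolution by Theorem~\ref{thm:LQ-implies-LR}, so $G$ is co-chordal by Fr\"oberg's theorem. Now suppose $G$ contains $\HH_n^c$ as an induced subgraph for some $n \ge 6$. Passing to complements, $\HH_n$ is an induced subgraph of the chordal graph $G^c$, so Lemma~\ref{lem:linear-quotients-induced} applies and shows that $\HS_{n-4}(\HH_n^c)$ has linear quotients, because $\HS_{n-4}(I(G))$ does. Since $n - 4 \ge 2$, this contradicts Theorem~\ref{thm:HLQ-H}(ii). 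Hence $G$ is $\HH_n^c$-free for every $n \ge 6$.

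For the converse under the hypothesis $|V(G)| \le 10$, I would first make two reductions. Since $\HS_0(G)$ and $\HS_1(G)$ always have linear quotients when $G$ is co-chordal, it suffices to handle $\HS_k(G)$ for $k \ge 2$. And since a disconnected co-chordal graph is a connected co-chordal graph together with isolated vertices---if $G = A \sqcup B$ is co-chordal then $G^c = A^c \vee B^c$ is chordal, which forces one of $A, B$ to be edgeless, as otherwise a cross-pair of non-edges produces an induced $4$-cycle---and isolated vertices do not change $I(G)$ or its homological shift ideals, we may assume $G$ is connected. Finally, only the graphs $\HH_m^c$ with $6 \le m \le 10$ can occur as induced subgraphs of a graph on at most ten vertices, so the hypothesis becomes: $G$ is connected, co-chordal, and contains none of $\HH_6^c,\dots,\HH_{10}^c$ as an induced subgraph.

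What remains is a finite verification: enumerate up to isomorphism the connected co-chordal graphs on at most ten vertices (equivalently, enumerate the connected chordal graphs and take complements), discard those containing some $\HH_m^c$ with $6 \le m \le 10$ as an induced subgraph, and for each survivor $G$ check that $\HS_k(G)$ has linear quotients for $2 \le k \le \pd I(G)$. Each such check can be carried out either from the minimal free resolution of $I(G^c)$ or, more efficiently, directly from the combinatorial description of the generators of $\HS_k(I(G^c))$ in Lemma~\ref{lem:gens-HS} together with the lex criterion of Lemma~\ref{lem:LQ-lex}. The search can be pruned using that homological linear quotients is inherited by induced subgraphs (Lemma~\ref{lem:linear-quotients-induced}) and that the classes of complements of forests, proper interval graphs, and block graphs are already known to have it. The main obstacle is entirely computational---the number of chordal graphs on ten vertices is in the hundreds of thousands---so the work lies in implementing the enumeration and the linear-quotient tests efficiently and in organizing the output so the verification is reproducible; there is no conceptual difficulty beyond this, which is precisely why the bound is stated as ten rather than established in general.
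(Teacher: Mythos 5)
Your proof is correct and follows the same route as the paper: the forward direction via Fr\"oberg's theorem together with Lemma~\ref{lem:linear-quotients-induced} and Theorem~\ref{thm:HLQ-H}, and the converse by a finite computational check over all (co-chordal) graphs on at most 10 vertices. The extra reductions you describe (connectedness, restricting to $\HH_m^c$ with $6\le m\le 10$) are sensible organizational details for the computation but do not change the argument.
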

\begin{proof}
    The first statement follows from Fr\"oberg's theorem \cite{Froberg}, Lemma~\ref{lem:linear-quotients-induced} and Theorem~\ref{thm:HLQ-H}, while the second from checking all graphs with at most 10 vertices, using the database \cite{HSI}.
\end{proof}

We came short of proving the conjecture in its full generality. The next question is rigidity, e.g., whether $\HS_k(G)$ having linear quotients implies that the same for either of $\HS_{k-1}(G)$ or $\HS_{k+1}(G)$. Some questions of this nature have a negative answer by the three main theorems in this paper. The next question for us, is to figure out which pattern is possible, and which is not. Following \cite{HSI}, we associate to a co-chordal graph $G$ (except for when $G$ has one edge) with a finite sequence of letters, denoted by $\{LQ_k(G)\}_{1\leq k \leq \pd I(G)}$, where
\[
LQ_k(G)=\begin{cases}
    T & \text{ if $\HS_k(G)\neq 0$ has linear quotients},\\
    F & \text{ if $\HS_k(G)\neq 0$ does not have linear quotients}.
\end{cases}
\]
The question becomes which sequence is possible for $\{LQ_k(G)\}_{1\leq k \leq \pd I(G)}$, where $G$ ranges over all co-chordal graphs. It is known that $LQ_1(G)=T$ for any co-chordal graph $G$ (\cite[Theorem 1.3]{FH23}). Our three main theorems imply that the following patterns are possible:
\begin{align*}
    T\underbrace{TTTTT}_{s}  \underbrace{FFFFFFFFF}_{t}, \\
    T \underbrace{FFFFF}_{s}  \underbrace{TTTTTTTTT}_{t}, \\
    T \underbrace{TTTT}_{s}F  \underbrace{TTTTTTTTT}_{t},
\end{align*}
for any pair of nonnegative integers $(s,t)$. We call these patterns \emph{predictable}. It turns out that these predictable patterns occur quite often, as can be seen in the following short summary of the database \cite{HSI}.

\begin{theorem}\label{thm:patterns}
    If $G$ is co-chordal, has more than one edge, and has at most 8 vertices, then $\{LQ_k(G)\}_{1\leq k \leq \pd I(G)}$ is of a predictable pattern. Tables~\ref{tab:nine} and~\ref{tab:ten} illustrate all the possible patterns for $\{LQ_k(G)\}_{1\leq k \leq \pd I(G)}$ when $G$ has 9 or 10 vertices, with the first column being the pattern, and the second column being the number of graphs with that pattern.
\end{theorem}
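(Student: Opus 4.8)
The plan is to establish both halves of the statement by a single finite computation, in the same spirit as the second half of Theorem~\ref{thm:conjecture-partial}. The point is that, for a co-chordal graph $G$, the word $\{LQ_k(G)\}_{1\le k\le \pd I(G)}$ is effectively computable: fixing a perfect elimination ordering of $G^c$, Lemma~\ref{lem:gens-HS} lists the minimal monomial generators of $\HS_k(G)$ purely combinatorially, so one never needs to build a free resolution; and since $\HS_k(G)$ is generated in a single degree, deciding whether it has linear quotients is a terminating search over orderings of $\mingens(\HS_k(G))$ (in practice the lex criterion of Lemma~\ref{lem:LQ-lex} certifies linear quotients with one canonical order for most of these ideals, with a short backtracking search handling the rest).

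First I would enumerate, up to isomorphism, all co-chordal graphs $G$ with $2\le |V(G)|\le 8$ and at least two edges---this is exactly the information recorded in the database \cite{HSI}---and for each such $G$ compute the word $\{LQ_k(G)\}_k$ by the procedure above, starting the search at $k=2$ since $LQ_1(G)=T$ always (\cite[Theorem~1.3]{FH23}) and $\HS_0(G)=I(G)$ has linear quotients by Fr\"oberg's theorem. One then checks, graph by graph, that the word is one of the three shapes $T\,T^{s}F^{t}$, $T\,F^{s}T^{t}$, or $T\,T^{s}F\,T^{t}$ with $s,t\ge 0$. Second, I would run the identical enumeration for $|V(G)|\in\{9,10\}$, but instead of verifying membership in the predictable list, bucket the graphs according to their word $\{LQ_k(G)\}_k$ and count each bucket; transcribing these counts gives Tables~\ref{tab:nine} and~\ref{tab:ten}.

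Two structural facts make the search tractable. By Lemma~\ref{lem:linear-quotients-induced}, $LQ_k(H)=F$ for an induced subgraph $H$ of $G$ forces $LQ_k(G)=F$; combined with Theorem~\ref{thm:HLQ-H}, this shows every $F$ in a word is witnessed by an induced copy of some $\HH_m^c$ with the $F$ sitting at position $k=m-4$. So one can first locate all induced $\HH_m^c$'s in $G$, pin down the positions at which $F$ is forced, and only then run the ideal computations. In particular, a graph in our list that is $\HH_m^c$-free for all $m\ge 6$ has homological linear quotients by Theorem~\ref{thm:conjecture-partial} (since it has at most $10$ vertices), i.e.\ its word is all $T$'s; thus only the graphs containing an induced $\HH_m^c$ require the full linear-quotients check.

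The hard part is not conceptual but computational: there are many co-chordal graphs on $9$ and $10$ vertices, and for such a graph $\HS_k(G)$ may have on the order of $\binom{|V(G)|}{k+2}$ generators, so the worst-case cost of the linear-quotients decision---a search over orderings of a large generating set---is the real bottleneck. This is why the argument relies on the precomputed database \cite{HSI} rather than on an improvised run, and why the reductions above (propagating $F$'s to supergraphs, disposing of $\HH_m^c$-free graphs wholesale) are worth building in. Once the census is complete the two conclusions are immediate: for $|V(G)|\le 8$ one observes that every word that occurs is predictable, and for $|V(G)|\in\{9,10\}$ one simply reads off the tabulated patterns and their multiplicities.
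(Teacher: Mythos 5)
Your proposal is correct and is essentially the paper's own argument: the theorem is justified purely by the exhaustive computation recorded in the database \cite{HSI}, i.e.\ enumerating all co-chordal graphs on at most $10$ vertices, computing each $\HS_k(G)$ combinatorially via Lemma~\ref{lem:gens-HS}, deciding linear quotients, and tabulating the resulting words. One caveat: your claim that Lemma~\ref{lem:linear-quotients-induced} together with Theorem~\ref{thm:HLQ-H} shows that \emph{every} $F$ at position $k$ is witnessed by an induced $\HH_{k+4}^c$ is not justified --- those results give only the implication that an induced $\HH_m^c$ forces $LQ_{m-4}(G)=F$, and Theorem~\ref{thm:conjecture-partial} rules out $F$'s only wholesale for $\HH_m^c$-free graphs, not position by position --- but since your procedure still runs the full linear-quotients check at the non-forced positions, this overstatement does not affect the correctness of the verification.
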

    
\begin{table}[h]
    \begin{tabular}{|l|c|}
        \hline 
        \emph{Pattern} & \emph{Count} \\
        \hline
        \hline
        TTFFTT & 69 \\ \hline
        TTFFT & 44 \\ \hline
        TTFFTTT & 10 \\ \hline
        TFTFF & 1 \\ \hline
        predictable & 12281 \\ \hline
    \end{tabular}
    \caption{Patterns when $G$ is co-chordal with 9 vertices.}
    \label{tab:nine}
\end{table}

\begin{table}[h]
    \begin{tabular}{|l|c|}
        \hline
        \emph{Pattern} & \emph{Count} \\ 
        \hline \hline
        TTFFFTT & 960 \\ \hline
        TTFFTTT & 885 \\ \hline
        TTFFFT & 551 \\ \hline
        TTFFTT & 263 \\ \hline
        TTFFTTTT & 144 \\ \hline
        TTFFFTTT & 123 \\ \hline
    \end{tabular}
    \begin{tabular}{|l|c|}
        \multicolumn{2}{c}{} \\ \hline
        TTTFFTT & 83 \\ \hline
        TTTFFT & 48 \\ \hline
        TFTFFF & 15 \\ \hline
        TTFFT & 14 \\ \hline
        TTTFFTTT & 10 \\ \hline
        TFTFFTT & 9 \\ \hline
    \end{tabular}
    \begin{tabular}{|l|c|}
        \multicolumn{2}{c}{} \\
        \hline
        TFTFFT & 5 \\ \hline
        TFTFFTTT & 1 \\ \hline
        TTFTFF & 1 \\ \hline
        TFTTFF & 1 \\ \hline
        TFTFF & 1 \\ \hline
        predictable & 109120 \\ \hline
    \end{tabular}
    \caption{Patterns when $G$ is co-chordal with 10 vertices.}
    \label{tab:ten}
\end{table}

Obviously, some patterns are impossible, e.g., those start with F, as it would contradict the known result that $LQ_1(G)=T$ for any co-chordal graph $G$. This raises a question: is there any impossible pattern that starts with T for co-chordal graphs? We present such a pattern.

\begin{theorem}\label{thm:impossible-pattern}
    Let $G$ be a co-chordal graph. Then the sequence $\{LQ_k(G)\}_{1\leq k \leq \pd I(G)}$ cannot be TFTF.
\end{theorem}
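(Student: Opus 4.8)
The plan is to argue by contradiction. Suppose $G$ is co-chordal with $\{LQ_k(G)\}_{1\le k\le \pd I(G)}$ equal to TFTF; then $\pd I(G)=4$, so $\HS_4(G)\ne 0$ while $\HS_5(G)=0$, and, since $\HS_0(G)$ and $\HS_1(G)$ automatically have linear quotients for a co-chordal graph, the hypothesis says precisely that $\HS_2(G)$ and $\HS_4(G)$ fail to have linear quotients whereas $\HS_3(G)$ has linear quotients.

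First I would exploit the constraint $\pd I(G)=4$. Since $I(G)$ has a $2$-linear resolution (Fr\"oberg), a standard application of Hochster's formula yields
\[
\pd I(G)=\max\{\,|W| : W\subseteq V(G),\ G^c[W]\text{ is disconnected}\,\}-2,
\]
so the largest subset of $V(G)$ inducing a disconnected subgraph of the chordal graph $G^c$ has exactly $6$ vertices. Using that minimal vertex separators of a chordal graph are cliques, I would write $V(G)=S\sqcup W$, where $S$ is a clique of $G^c$ (equivalently, an independent set of $G$) and $W=C_1\sqcup\cdots\sqcup C_m$ with $m\ge 2$, $|C_1|+\cdots+|C_m|=6$, and no edge of $G^c$ joining distinct $C_i$. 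The first reduction I would seek is that the vertices of $S$ completely joined to $W$ in $G^c$ (these are isolated in $G$, hence invisible to $I(G)$) and mutual twins inside $S$ may be discarded or merged, using Lemma~\ref{lem:linear-quotients-induced} and the explicit generator description of Lemma~\ref{lem:gens-HS}, without changing whether $\HS_k(G)$ has linear quotients for $k\le 4$; after these reductions $G$ should have an absolutely bounded number of vertices.

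Next I would localize the two failures of linear quotients. For $\HS_2(G)$: by the identity $\HS_2(G)^{\le m}=\HS_2(G[\supp m])$ extracted from the proof of Lemma~\ref{lem:linear-quotients-induced}, together with Lemma~\ref{lem:linear-quotients-restriction}, the failure of linear quotients for $\HS_2(G)$ should force $G$ to contain a small induced subgraph $H$ with $\HS_2(H)$ already failing linear quotients -- the minimal one being $\HH_6^c$, as in Theorem~\ref{thm:HLQ-H} with $n=6$ (in several cases $\HS_2(H)$ will in fact restrict to a spread two-generator ideal as in Lemma~\ref{lem:no-LQ-example}, so $H$ has at most $6$ vertices), possibly alongside a short explicit list of further minimal obstructions. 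For the top ideal $\HS_4(G)$, the extremality of $\pd I(G)$ should pin its minimal generators down to monomials supported on $W$ together with a clique of $S$ (via Lemma~\ref{lem:gens-HS}), so that its failure of linear quotients localizes in the same way. Combined with the bounded vertex count from the previous step, this places $G$ in a finite, explicitly describable family of co-chordal graphs with $\pd I(G)=4$, and for each of them one checks -- by hand or against the database \cite{HSI}, exactly as in Theorem~\ref{thm:patterns} -- that if $\HS_2(G)$ fails and $\HS_3(G)$ has linear quotients then $\HS_4(G)$ also has linear quotients, contradicting $LQ_4(G)=F$.

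The hard part will be the localization in the previous paragraph: ``does not have linear quotients'' is existential over all admissible orderings of the minimal generators, so it need not be witnessed by a bounded-size induced subgraph. One must show that for the particular ideals $\HS_2(G)$ and the top shift ideal $\HS_4(G)$ of a co-chordal graph such a witness does exist with bounded support, using the chordality of $G^c$, its perfect elimination ordering, and a type-by-type analysis of generators in the spirit of Propositions~\ref{prop:gens-H} and~\ref{prop:gens-TH}. A secondary difficulty is carrying out the twin/cone reduction of the first step at the level of homological shift ideals rather than merely of $I(G)$ itself.
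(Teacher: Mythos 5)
Your proposal has a genuine gap at the step you yourself flag as ``the hard part,'' and that gap is not a technicality --- it is essentially the whole difficulty. Lemma~\ref{lem:linear-quotients-induced} only propagates linear quotients \emph{downward} (from $G$ to induced subgraphs), equivalently failure \emph{upward}; nothing in the paper, and nothing in your sketch, shows that a failure of linear quotients for $\HS_2(G)$ or for the top shift ideal is witnessed by an induced subgraph of bounded size. A statement of that kind for $\HS_k$ is essentially the content of the open Conjecture~\ref{conj:main} (restricted to one homological degree), which the authors explicitly cannot prove in general. The twin/cone reduction in your first step is likewise asserted (``should be discarded or merged \dots without changing whether $\HS_k(G)$ has linear quotients'') but never justified, and it is not clear it holds at the level of the higher $\HS_k$. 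Finally, note that your program as sketched does not visibly use the exact pattern TFTF: the same outline would seem to ``rule out'' patterns such as TFTFF or TTFFTT, which Tables~\ref{tab:nine} and~\ref{tab:ten} show actually occur. Any correct argument must exploit the specific positions of the T's and F's, and your sketch defers exactly that to an unspecified finite check over an unspecified finite family.

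For comparison, the paper's proof is a rigorous reduction to a finite computation that avoids localization entirely. A TFTF graph has $\pd I(G)=4$ and, by the exhaustive verification of Theorem~\ref{thm:patterns}, more than $10$ vertices. Passing to an $11$-vertex induced subgraph $G'$, monotonicity of projective dimension and Lemma~\ref{lem:linear-quotients-induced} force $\pd I(G')\leq 4$ and $\HS_3(G')$ to have linear quotients; the same constraints on every $10$-vertex induced subgraph $G''$ of $G'$ leave, per the database \cite{HSI}, only three possibilities for $G''$, hence $1233$ candidates for $G'$, each of which is checked to have either $\pd I(G')>4$ or the pattern TFFF --- a contradiction. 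If you want to salvage your structural approach, the missing ingredient is a theorem that failure of linear quotients for $\HS_k$ of a co-chordal graph is always detected on an induced subgraph of bounded size; until you can prove that, the argument does not close.
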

\begin{proof}
    Suppose, for the sake of contradiction, that there exists a co-chordal graph $G$ such that $\{LQ_k(G)\}_{1\leq k \leq \pd I(G)}$ is the pattern TFTF. This implies that $\pd I(G)= 4$ and, by Theorem~\ref{thm:patterns}, that $G$ has more than 10 vertices. 

    Since $G$ is co-chordal and has more than 10 vertices, it contains a co-chordal induced subgraph $G'$ with 11 vertices. Then $\pd I(G')\leq \pd I(G) = 4$, and $LQ_3(G')=T$ by Lemma~\ref{lem:linear-quotients-induced}. It suffices to show that such a $G'$ does not exist.
    
    Indeed, since $G'$ is co-chordal and has more than 10 vertices, it contains a co-chordal induced subgraph $G''$ with 10 vertices. Then $\pd I(G'')\leq \pd I(G') = 4$, and $LQ_3(G'')=T$ by Lemma~\ref{lem:linear-quotients-induced}. According to the database \cite{HSI}, there are exactly three such graphs. Then we can construct all possible choices for $G'$. There are $1233$ such graphs, and we verified that either $\pd I(G')>4$ or that $\{LQ_k(G')\}_{1\leq k \leq \pd I(G')}$ has the TFFF pattern. This contradicts the fact that $LQ_3(G')=T$, as desired.  
\end{proof}

\bibliographystyle{amsplain}
\bibliography{refs}
\end{document}